\title{$J$-invariant of hermitian forms 
over quadratic extensions}
\date{7 August 2018}
\subjclass[2010]{14C25 ;  11E39.}
\author{Rapha\"{e}l Fino}
\address
{Instituto de Matem\'{a}ticas \\
Ciudad Universitaria\\
UNAM\\
DF 04510, M\'{e}xico}
\address
{{\it Web page:}
{\tt http://www.matem.unam.mx/fino}}
\email {fino {\it at} im.unam.mx}
\thanks{This work has been partially supported by CAPES, Coordination of the Development of the Superior Level Personnel - Brazil, in the framework of the Science Without Borders program with the Federal University of Amazonas (UFAM), Manaus.
}
\numberwithin{equation}{section}
\theoremstyle{definition}
\newtheorem{defi}[equation]{Definition}
\newtheorem{rem}[equation]{Remark}
\newtheorem{lemme}[equation]{Lemma}
\newtheorem{prop}[equation]{Proposition}
\newtheorem{thm}[equation]{Theorem}
\newtheorem{cor}[equation]{Corollary}
\newcommand{\bigslant}[2]{{\raisebox{.2em}{$#1$}\left/\raisebox{-.2em}{$#2$}\right.}}
\begin{document}

\begin{abstract}
We develop the version of the $J$-invariant for hermitian forms over quadratic extensions in a similar way 
Alexander Vishik did it for quadratic forms.
%The exposition of this note follows the thread of \cite[Chapter XVI]{EKM}. 
This discrete invariant contains informations about rationality of
algebraic cycles on the maximal unitary grassmannian associated with a hermitian form over a quadratic extension.
The computation of the canonical $2$-dimension
of this grassmannian in terms of the $J$-invariant is provided, as well as a complete motivic decomposition.

\smallskip
\noindent \textbf{Keywords:} Hermitian and quadratic forms, grassmannians, Chow groups and motives. 
\end{abstract}

\maketitle

\tableofcontents

\section{Introduction}

Let $F$ be an arbitrary field and $K/F$ a quadratic separable field extension.
In this article, we define a new discrete invariant $J(h)$ for a non-degenerate $K/F$-hermitian
form $h:V\times V\rightarrow K$.
This invariant is developed on the model of the $J$-invariant for quadratic forms, 
due to Alexander Vishik, see \cite{Grass}, and later generalized to an arbitrary semi-simple algebraic group of inner type by V.\,Petrov, N.\,Semenov and K.\,Zainoulline in \cite{J-inv}.
Let $X$ denote the $F$-variety
of maximal totally $h$-isotropic subspaces of $V$.
The invariant $J(h)$ contains informations about rationality of algebraic cycles on $X$
over a splitting field of $h$.
The same way it was obtained by Nikita A.\,Karpenko and Alexander S.\,Merkurjev for maximal orthogonal grassmannian in the case of quadratic forms (see \cite[Theorem 90.3]{EKM}),
the invariant $J(h)$ notably allows one to recover the canonical $2$-dimension of the maximal 
unitary grassmannian $X$ (Theorem 8.2).

In general, the $J$-invariant has several important applications. For example, {A.\,Vishik} used it in its refutation of the Kaplansky's conjecture on the $u$-invariant of a field (see \cite{uINV}) and 
so did N.\,Semenov when he answered a question by J-P.\,Serre about groups of type $E_8$ (see \cite{mcci}).

In the case of quadratic forms, the Chow motive of the maximal orthogonal grassmannian associated
with a quadratic form
splits as a sum of Tate motives over a splitting field of the quadratic form,
the reason being that there is a nice filtration of the maximal orthogonal grassmannian by
affine bundles.
Because this does not stand in the case of hermitian forms, we use the structure Theorem
\cite[Theorem 15.8]{rcs} by N.\,A.\,Karpenko and the modified Chow ring
$\text{Ch}_K(X):=\text{Ch}(X)/
\;\text{Im}\left(\text{Ch}(X_K)\rightarrow \text{Ch}(X) \right)$, with $\text{Ch}(X)$ the integral
Chow ring $\text{CH}(X)$ modulo $2$.  
These considerations allow one to follow the method introduced by A.\;Vishik for quadratic forms to describe completely the ring
$\text{Ch}_K(X)$ when $h$ is split (equivalently, when $X$ has a rational point) and 
the subring $\text{Im}\left(\text{Ch}_K(X)\rightarrow \text{Ch}_K(X_{F(X)}) \right)$ of rational elements
for arbitrary $h$, where
$F(X)$ is the function field of $X$.
We also work with the category of \textit{$\text{Ch}_K$-motives}, defined from $\text{Ch}_K$, and provide a complete motivic
decomposition of the $\text{Ch}_K$-motive $M^K(X)$ of $X$ in terms of the $J$-invariant $J(h)$ (Theorem 9.4).
The $\text{Ch}_K$-motive $M^K(X)$ is related to the \textit{essential motive} of $X$ (see Remark 9.7).

By a theorem of Jacobson (see \cite[Corollary 9.2]{UG}), the non-degenerate $K/F$-hermitian form $h$
is entirely determined by the associated $F$-quadratic form $q:v\mapsto h(v,v)$, with $V$ considered
as an $F$-vector space. 
Moreover, the $F$-quadratic forms arising this way from 
$K/F$-hermitian forms can be described as the tensor product of a non-degenerate
bilinear form by the norm form of $K/F$, which is an anisotropic binary quadratic from.
Conversely, an $F$-quadratic form defined by such a tensor product is isomorphic to 
the quadratic form arising from the hermitian form induced by the bilinear form
and the quadratic separable field extension $K/F$ given by the discriminant of the binary quadratic form. 
As explained by N.\,A.\,Karpenko in the introduction of \cite{UG}, although these observations 
show that the study of $K/F$-hermitian forms is equivalent to the study of binary divisible quadratic forms
over $F$,
this does not show that the hermitian forms are not worthy of interest.
Indeed, on the one hand, it shows that the class of binary divisible quadratic forms is quite important. On the other hand, it provides the opportunity to use the world of hermitian forms to study such quadratic forms, which can be more appropriate than staying exclusively at the level of quadratic forms,
as illustrated by Proposition 10.1.

The paper is organized as follows.
In section 3, we use the relative cellular space structure on $X$ given by
\cite[Theorem 15.8]{rcs} to get the relation of Proposition 3.4 between Chow rings $\text{Ch}_K$
(defined in section 2) associated with the maximal unitary grassmannian of
a hermitian subform of an isotropic $K/F$-hermitian form $h$. From section 4 to 8, we literally
follow the thread of \cite[\S 86 to \S 90]{EKM}. In this part of the article, we first use the previously
mentioned relation to get a complete description of $\text{Ch}_K(X)$ in the split case
in terms of generators and relations (Theorem 4.9 and Proposition 4.15),
from which we deduce a description of the subring of rational elements in the general case
in terms of those generators (Theorem 5.7). The $J$-invariant $J(h)$ is then defined from the latter description. We also compute some Steenrod operations of cohomological type on $\text{Ch}(X)$
in the split case (Theorem 7.2). In Theorem 8.2, we obtain the canonical 
$2$-dimension of $X$ in terms of $J(h)$, on the model of \cite[Theorem 90.3]{EKM}.
In section 9, using Rost Nilpotence, we provide the complete motivic decompostion
of $M^K(X)$ in terms of $J(h)$ (Theorem 9.4), in the spirit of \cite[Theorem 5.13]{J-inv}.
In the final section 10, we compare the $J$-invariant $J(h)$ of a non-degenerate $K/F$-hermitian
form $h$ with the $J$-invariant $J(q)$ of the associated quadratic form $q$ (Proposition 10.1).

\smallskip
\noindent
{\sc Acknowledgements.} I am grateful to Nikita A.\,Karpenko for initiating me to
the geometric theory of hermitian forms. 
I would like to thank the anonymous referees.

\section{$K$-Chow rings}

Let $F$ be an arbitrary field, $K/F$ a quadratic separable field extension and
$X$ an $F$-variety (i.e., a separated $F$-scheme of finite type).
We denote by $\text{Ch}(X)$ the integral Chow ring $\text{CH}(X)$ modulo $2$.

\medskip

We set
\[\text{Ch}_K(X):=\text{Ch}(X)/
\;\text{Im}\left(\text{Ch}(X_K)\rightarrow \text{Ch}(X) \right),\]
where the homomorphism $\text{Ch}(X_K)\rightarrow \text{Ch}(X)$ is the push-forward
of the projection $X_K \rightarrow X$.

\medskip

Note that $\text{Im}\left(\text{Ch}(X_K)\rightarrow \text{Ch}(X) \right)$ is an ideal by the Projection Formula (\cite[Proposition 56.9]{EKM}), called the \textit{norm ideal}, so that $\text{Ch}_K(X)$ inherits the ring structure of the initial Chow ring.
For example, one has $\text{Ch}_K(\text{Spec}(F))=\mathbb{Z}/ 2\mathbb{Z}$, and for any $F$-variety $X$,  the ring $\text{Ch}_K(X_K)$
is trivial. We write $(\varphi)_K$ for the $K$-Chow groups homomorphism associated with a 
Chow groups homomorphism $\varphi$ which preserves norm ideals.

\medskip

Since the norm ideal is preserved by pull-backs and push-forwards, one can define the additive category of \textit{$\text{Ch}_K$-motives} the same way as the category of Chow motives (see \cite[Chapter XII]{EKM}) but using the Chow rings $\text{Ch}_K$ instead of the usual Chow rings $\text{CH}$.
For a smooth proper $F$-variety $X$, we write $M^K(X)$ for the associated $\text{Ch}_K$-motive.

\begin{rem}
For a field extension $E/F$ and $j\geq 0$, let us denote by  $N_{j}(E)$ the subgroup of the Milnor group $K^M_{j}(E)$ generated by the norms from finite field extensions of $E$ that split the extension $K/F$.
Then the 
cycle module 
$E\mapsto K^M_{\ast}(E)$
%, for any field extension $E/F$, formed by the milnor groups
over $F$ gives rise to an assignment $E\mapsto K^M_{\ast}(E)/N_{\ast}(E)$.
One can check that the latter is also a cycle module over $F$, in particular,
the fact that residue maps are well-defined comes from the rule \cite[R3b]{Rostchow}. 
Hence, one can consider the cohomology theory associated with this cycle module
(which contained the $K$-Chow groups)
instead of the cohomology theory of the Milnor cycle module
and thus obtain some $\text{Ch}_K$-versions of results for classical Chow groups
(see Propositions 6.5, 8.1 and 9.2).
\end{rem}

\section{Isotropic hermitian forms}

\subsection{Relative cellular spaces}

Let $F$ be a field.

\begin{defi}
Let $X$ be a smooth proper $F$-variety supplied with a filtration $\mathcal{F}$ by closed subvarieties
\[\emptyset = X_{(-1)}\subset X_{(0)} \subset \cdots X_{(n)}=X .\]
The variety $X$ is a \textit{relative cellular space} over a smooth proper 
$F$-variety $Y$ if the associated \emph{adjoint} variety
\[\text{Gr}_\mathcal{F}X=\coprod_{k=0}^{n} X_{(k)}\backslash X_{(k-1)}\]
is a vector bundle over $Y$. The variety $Y$ is called the \textit{base} of $X$.
\end{defi}

%\begin{rem}
%Let $X$ be a relative cellular space over $Y$ and let $Y'$ be a smooth proper $F$-variety.
%Then $X\times Y'$ is a relative cellular space over $Y\times Y'$.
%\end{rem}

%\begin{rem} One can compose relative cellular structures (\cite[Definition 7.4]{rcs}).
%Let $X$ supplied with $\mathcal{F}$ be a relative cellular space over $Y$.
%Suppose that $Y$ is
%a relative cellular space over $Y'$ and denote by
%$j$ the associated embedding $\text{Gr}\, Y \hookrightarrow Y$.
%Let $\mathcal{F}'$ be the filtration on $X$ such that 
%$\text{Gr}_{\mathcal{F}'}X=j^{\ast}(\text{Gr}_\mathcal{F} X)$.
%Then $X$ supplied with $\mathcal{F}'$
%is a relative cellular space over $Y'$.
%The corresponding structure morphism is the composition
%$j^{\ast}(\text{Gr}_\mathcal{F}X) \rightarrow \text{Gr}\, Y \rightarrow Y' $
%of vector bundles.
%\end{rem}

%\medskip

For $V$ a finite dimensional $F$-vector space,
we denote by $\Gamma (V)$ the full grassmannian of $F$-subspaces of $V$.
To an epimorphism $p:V\rightarrow V'$ of $F$-vector spaces, one can associate
the filtration
\[\emptyset = \Gamma (V)_{(-1)}\subset \Gamma (V)_{(0)} \subset \cdots \Gamma (V)_{(\text{dim}\, V')}=\Gamma (V)\]
on $\Gamma (V)$ defined as follows:
for any local commutative $F$-algebra $R$ and $0\leq k \leq \text{dim}\, V'$, one has
\[\Gamma (V)_{(k)}(R)=\{N \in \Gamma (V)(R) |\, \Lambda^{k+1}\left(p_R(N) \right)=0 \},\]
where $p_R:V_R\rightarrow V'_R$
is induced by $p$
and $\Lambda^{k+1}$ stands for the $(k+1)$-th exterior power.

\medskip

Let $0\rightarrow V'' \rightarrow V \rightarrow V'\rightarrow 0$ be an exact sequence of $F$-vector
spaces.
The result \cite[Corollary 9.11]{rcs} by 
N.\,A.\,Karpenko asserts that $\Gamma (V)$ supplied with the filtration associated with 
$V\rightarrow V'$ is a relative cellular space over $\Gamma (V'')\times \Gamma (V')$.

\medskip

Moreover, let $K/F$ be a quadratic separable field extension. Suppose that $V$, $V'$ and $V''$ are 
$K$-vector spaces and that the short sequence is an exact sequence of $K$-vector spaces. Then the previous relative cellular structure on $\Gamma (V)$ induces a relative cellular structure on 
the Weil restriction $\Gamma^K(V)$ of the full grassmannian of $K$-subspaces with respect to the extension $K/F$:
$\Gamma^K(V)$ is a relative cellular space over $\Gamma^K (V'')\times \Gamma^K (V')$, see
\cite[Theorem 10.9]{rcs}.
The associated filtration is the restriction of the previous one by $K$-subspaces.

\medskip

Suppose that the $K$-vector space $V$ is isomorphic to a sum of $K$-subspaces $V'\oplus V'' \oplus \tilde{V}$.
Using the exact sequences
\[0\rightarrow V'' \oplus \tilde{V} \rightarrow V \rightarrow V'\rightarrow 0\;\;\; \text{and}\;\;\; 
0\rightarrow V'' \rightarrow V'' \oplus \tilde{V} \rightarrow \tilde{V} \rightarrow 0,\]
and composing the relative structures,
% as in Remark 3.2 and the composition (Remark 3.3),
the variety $\Gamma^K(V)$ is turned into a relative cellular space over 
$\Gamma^K (V')\times \Gamma^K (V'')\times \Gamma^K (\tilde{V})$, 
as described in \cite[Example 10.14]{rcs}.

\medskip

Let $h:V\times V\rightarrow K$ be a non-degenerate isotropic $K/F$-hermitian form on $V$.
Denote by $L \subset V$ an isotropic line and set $\tilde{V}=L^{\perp}/L$.
Let $L^{\ast}$ be an arbitrary splitting of $V\rightarrow V/L^{\perp}$.
Applying the observation of the previous paragraph to the decomposition
$V\simeq L \oplus L^{\ast} \oplus \tilde{V}$, one obtains that
$\Gamma^K(V)$ is a relative cellular space over 
$\Gamma^K (L)\times \Gamma^K (L^{\ast})\times \Gamma^K (\tilde{V})$. 

Furthermore, by \cite[Theorem 15.8]{rcs}, the latter relative cellular structure restricts to the $h$-isotropic subspaces
in the following way.
Let $\tilde{h}$ be
the $K/F$-hermitian form on $\tilde{V}$ induced by $h$.
Let $Y$ and $\tilde{Y}$ be the $F$-varieties of totally isotropic subspaces of $h$ and
$\tilde{h}$ respectively. 
Let $Z$ be the Weil transfer of the $K$-variety of $2$-flags of $K$-vector subspaces of $L$ with respect to the extension $K/F$ (note that the 0-dimensional $F$-variety $Z$ is the disjoint union of three copies of 
$\text{Spec}(F)$ and three copies of $\text{Spec}(K)$).
Then $Y$ is a relative cellular space over $Z\times \tilde{Y}$. 
The associated filtration is the restriction of the filtration associated with the relative cellular space structure
of $\Gamma^K(V)$ over $\Gamma^K (L)\times \Gamma^K (L^{\ast})\times \Gamma^K (\tilde{V})$
by $h$-isotropic subspaces.

\medskip

In the same article \cite{rcs}, the author proves that, 
in general, the Chow motive
of a relative cellular space is isomorphic to the Chow motive of its base (\cite[Theorem 6.5]{rcs})
and he describes in \cite[Corollary 6.11]{rcs} how this isomorphism restricts to
the irreducible components of the relative cellular space.

Applied to the previous situation, this gives the following.
Let $X$ be the $F$-variety of maximal totally isotropic subspaces in $h$.
The dimension of such a subspace is $r:=\lfloor \text{dim}(h)/2 \rfloor$.
Note that $X$ is an irreducible component of $Y$.
Besides, the unitary grassmannian $X$ is a projective homogeneous variety
under a projective unitary group of outer type 
(see the introduction of \cite{UG}).
We write $\tilde{X}$ for the 
maximal unitary grassmannian associated with $\tilde{h}$.
Since maximal totally isotropic subspaces
of $\tilde{V}$ are in one-to-one correspondence with those
of $V$ containing $L$, one can view $\tilde{X}$ as a closed subvariety of $X$. 
Let $i:\tilde{X}\hookrightarrow X$ denote the closed embedding.
Let $\beta: \tilde{X} \leadsto X$ be the correspondence given by the scheme of pairs $(W/L,U)$,
where $U$ is a totally isotropic $r$-dimensional $K$-subspace of $V$, $W$ is a totally isotropic 
$r$-dimensional subspace of $L^{\perp}$ containing $L$, and $\text{dim}_K(U+W)\leq r+1$
(correspondences are defined in \cite[\S 62]{EKM}).
Then one has the following Chow motivic decomposition with $\mathbb{Z}/2\mathbb{Z}$-coefficients
%\begin{equation}
%\text{Ch}^{\ast}(X)\simeq \text{Ch}^{\ast}(\tilde{X}) \oplus \text{Ch}^{\ast -d}(\tilde{X})
%\bigoplus_{0\leq s \leq t}\text{Ch}^{\ast-a_s}\left(\text{Spec}(K)\right), 
%\end{equation}
%where the injection $\text{Ch}^{\ast}(\tilde{X}) \hookrightarrow \text{Ch}^{\ast}(X)$ coincides with %$\beta_{\ast}$, $d=\text{dim}(X)-\text{dim}(\tilde{X})$, the injection $\text{Ch}^{\ast -d}(\tilde{X}) %\hookrightarrow \text{Ch}^{\ast}(X)$ coincides with $i_{\ast}$,
%and $t$ and the $a_s$ are integers.
\begin{equation}
M(X)\simeq M(\tilde{X})\{d\}\oplus M(\tilde{X})\oplus M,
\end{equation}
where $d=\text{dim}(X)-\text{dim}(\tilde{X})$, the morphism
$M(\tilde{X})\{d\} \rightarrow M(X)$ is given by $\beta$, the morphism $M(\tilde{X})\rightarrow M(X)$ is
given by the class in $\text{Ch}(\tilde{X}\times X )$ of the graph of $i$
and $M$ is a sum of shifts of $M\left(\text{Spec}(K)\right)$.

\medskip

At the level of $K$-Chow groups (introduced in the previous section), decomposition (3.2) implies that
\begin{equation} \text{Ch}_K^{\ast}(X)\simeq \text{Ch}_K^{\ast}(\tilde{X}) \oplus \text{Ch}_K^{\ast -d}(\tilde{X}),\end{equation}
where the injection $\text{Ch}^{\ast}(\tilde{X}) \hookrightarrow \text{Ch}^{\ast}(X)$ coincides with $\beta_{\ast}$ and the injection $\text{Ch}^{\ast -d}(\tilde{X}) \hookrightarrow \text{Ch}^{\ast}(X)$ coincides with $i_{\ast}$.
In particular, if $h$ is split (i.e., if the Witt index $i_0(h)$ of $h$ is equal to $r$), one deduces by induction that  
$\text{Ch}_K(X)$ is a free $\mathbb{Z}/2\mathbb{Z}$-module of rank $2^{r}$.

\medskip

We write $j$ for the open embedding $X\backslash \tilde{X}\hookrightarrow X$ and we set 
\[f:=\beta^t \circ j: X\backslash \tilde{X} \leadsto \tilde{X},\]
with ${\beta}^t$ the transpose of $\beta$.

Since $\text{Im}(i_{\ast})=\text{Ker}(j^{\ast})$ by the localization exact sequence (see \cite[\S 52.D]{EKM}), it follows from
(3.3) that
\[\text{Ch}_K^{\ast}(X)= \text{Im}({(\beta_{\ast})}_K)\oplus \text{Ker}\left((j^{\ast})_K \right).\]
Hence, since $j^{\ast}$ is surjective (see \textit{loc. cit.}), we have obtained the following statement.

\begin{prop}
\textit{The homomorphism} 
\[(f^{\ast})_K :\text{Ch}^{\ast}_K(\tilde{X})\rightarrow \text{Ch}^{\ast}_K(X\backslash \tilde{X}),\]
 \textit{is an isomorphism.}
\end{prop}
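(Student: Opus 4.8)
The plan is to deduce the statement from the direct-sum decomposition $\text{Ch}_K^{\ast}(X)=\text{Im}\left((\beta_{\ast})_K\right)\oplus\text{Ker}\left((j^{\ast})_K\right)$ obtained just above, after first rewriting $(f^{\ast})_K$ in terms of the two maps occurring there. Since $f=\beta^t\circ j$ is a composite of correspondences from $X\backslash\tilde{X}$ to $\tilde{X}$, the rules governing composition and transposition of correspondence actions on Chow groups (\cite[\S 62]{EKM}) yield $f^{\ast}=(\beta^t\circ j)^{\ast}=j^{\ast}\circ(\beta^t)^{\ast}=j^{\ast}\circ\beta_{\ast}$, using that the pull-back action of $j$ viewed as a correspondence is the restriction homomorphism and that $(\beta^t)^{\ast}=\beta_{\ast}$. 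All of these operations preserve the norm ideal, so the identity passes to Chow $K$-groups: $(f^{\ast})_K=(j^{\ast})_K\circ(\beta_{\ast})_K$.

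First I would observe that $(\beta_{\ast})_K$ is injective: by the decomposition (3.5) it is exactly the inclusion of the first summand $\text{Ch}_K^{\ast}(\tilde{X})$, hence an isomorphism onto $\text{Im}\left((\beta_{\ast})_K\right)$. Next I would study the restriction of $(j^{\ast})_K$ to that summand. As $\text{Ker}\left((j^{\ast})_K\right)$ is a complement of $\text{Im}\left((\beta_{\ast})_K\right)$ in the decomposition, this restriction is injective; and since $j^{\ast}$ is surjective (localization sequence, \cite[\S 52.D]{EKM}) and respects the norm ideal, $(j^{\ast})_K$ is surjective, so the restriction is onto $\text{Ch}_K^{\ast}(X\backslash\tilde{X})$ as well. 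Thus $(j^{\ast})_K$ restricts to an isomorphism $\text{Im}\left((\beta_{\ast})_K\right)\xrightarrow{\sim}\text{Ch}_K^{\ast}(X\backslash\tilde{X})$.

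Combining the two steps, $(f^{\ast})_K=(j^{\ast})_K\circ(\beta_{\ast})_K$ is the composite of the isomorphism $\text{Ch}_K^{\ast}(\tilde{X})\xrightarrow{\sim}\text{Im}\left((\beta_{\ast})_K\right)$ with the isomorphism $\text{Im}\left((\beta_{\ast})_K\right)\xrightarrow{\sim}\text{Ch}_K^{\ast}(X\backslash\tilde{X})$, and is therefore itself an isomorphism. I expect the only delicate point to be the first paragraph: correctly tracking the variances so that the composite-transpose correspondence $f=\beta^t\circ j$ really acts as $j^{\ast}\circ\beta_{\ast}$, and checking compatibility with the passage to Chow $K$-groups. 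Once $(f^{\ast})_K$ is identified with $(j^{\ast})_K\circ(\beta_{\ast})_K$, the conclusion is immediate; the substantive geometric input, namely the decomposition (3.4), has already been provided by \cite[Theorem 15.8]{rcs} together with \cite[Corollary 6.11]{rcs}.
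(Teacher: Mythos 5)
Your argument is correct and is essentially the paper's own proof: both identify $(f^{\ast})_K=(j^{\ast})_K\circ(\beta_{\ast})_K$, use decomposition (3.5) together with the localization sequence to write $\text{Ch}_K^{\ast}(X)=\text{Im}\left((\beta_{\ast})_K\right)\oplus\text{Ker}\left((j^{\ast})_K\right)$, and conclude from the surjectivity of $j^{\ast}$. No meaningful difference in route.
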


The above proposition is crucial for the induction in the proof of Theorem 4.9 in the next section.

\subsection{Associated quadrics}

We use notation introduced in Subsection 3.1.
Let $q:V \rightarrow F$, $v\mapsto h(v,v)$ be the non-degenerate $F$-quadratic form associated with $h$, where $V$ is considered
as an $F$-vector space. Note that $\text{dim}(q)=2\text{dim}(h)$.
We denote by $Q$ the smooth projective quadric of $q$.
Similarly, let $\tilde{q}:\tilde{V} \rightarrow F$ be the non-degenerate $F$-quadratic form associated with the hermitian form $\tilde{h}$ and let us denote by $\tilde{Q}$ the smooth projective quadric of $\tilde{q}$.
Note that since $\tilde{q}$ is also the form induced by $q$ on $P^{\perp}/P$, 
with $P$ the $q$-isotropic $F$-plane corresponding to $L$,
it is Witt-equivalent to $q$.

The \textit{incidence correspondence} $\alpha:\tilde{Q} \leadsto Q$ is given by the scheme of pairs
$(B/P,A)$ of isotropic $F$-lines in $P^{\perp}/P$ and $V$ respectively with $A\subset B$.
By \cite[Lemma 72.3]{EKM}, for $k<i_0(q)$, one has $\alpha_{\ast}(\tilde{l}_{k-2})=l_k$ and $\alpha^{\ast}(l_{k})=\tilde{l}_{k-2}$, where $l_k$ (resp. $\tilde{l}_k$) is the class in $\text{CH}_k(Q)$ 
(resp. $\text{CH}_k(\tilde{Q})$) of a $k$-dimensional totally $q$-isotropic (resp. $\tilde{q}$-isotropic) subspace of $\mathbb{P}_F(V)$ (resp. $\mathbb{P}_F(\tilde{V})$).
If $q$ is split, given an orientation of $Q$ (i.e., the choice of one of the two classes in $\text{CH}(Q)$ of maximal isotropic subspaces), we choose an orientation of $\tilde{Q}$ so that the previous formulas hold for the classes of the respective maximal isotropic subspaces.

\medskip

We write $E$ for the vector bundle of rank $2r$ over $X$ given by the closed subvariety of the trivial bundle
$V\mathbbm{1}=V\times X$ consisting of pairs $(u,U)$ such that $u\in U$ (with $V$ viewed as an $F$-vector space).
Let us denote as $E^{\perp}$ the kernel of the natural morphism $V\mathbbm{1}\rightarrow E^{\vee}$
given by the polar bilinear form associated with the quadratic form $q$, where $E^{\vee}$ is the dual bundle of $E$.
If the dimension of $h$ is even, one has $E^{\perp}=E$. Otherwise, $E$ is a subbundle of 
$E^{\perp}$ of corank $2$.
For our purpose, the vector bundle $E$ is the appropriate hermitian version of the the vector bundle used in \cite[\S 86]{EKM} for the case
of quadratic forms.

The associated projective bundle $\mathbb{P}(E)$ is a closed subvariety of codimension
$2\lfloor (\text{dim}(h)+1)/2\rfloor -1$ of $Q\times X$
and we denote by $in$ the associated closed embedding. 
We write $\gamma$ for the class of $\mathbb{P}(E)$ 
in $\text{CH}(Q\times X)$ and view it as a correspondence $Q \leadsto X$.
Similarly, one can consider the analogous correspondence $\tilde{\gamma}:\tilde{Q} \leadsto \tilde{X}$.

\begin{lemme}
\textit{One has $\gamma\circ \alpha=\beta \circ \tilde{\gamma}$ and 
$\tilde{\gamma}\circ \alpha^t=i^t \circ \gamma$.}
\end{lemme}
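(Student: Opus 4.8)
The plan is to compute, for each of the two identities, both compositions as honest algebraic cycles on $\tilde Q\times X$ (resp.\ $Q\times\tilde X$) and then to check that they coincide. Throughout I use the composition rule $\delta\circ\varepsilon=(p_{13})_{\ast}\bigl((p_{12})^{\ast}\varepsilon\cdot(p_{23})^{\ast}\delta\bigr)$ on the relevant triple product, together with the fact that $\alpha$, $\beta$, $\gamma$, $\tilde\gamma$ and the graph of $i$ are all supported on incidence subvarieties defined over $F$. I will show that each composition equals the fundamental class of a single such incidence variety; since these are $F$-subvarieties, the resulting identities hold over $F$, while the intersection multiplicities, being insensitive to base change, may be computed after passing to a splitting field of $h$, where $X$, $\tilde X$, $Q$, $\tilde Q$ and all the incidence loci become homogeneous.

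I would dispatch the second identity $\tilde\gamma\circ\alpha^{t}=i^{t}\circ\gamma$ first, since $i$ is a morphism and composition with the transpose of its graph is a refined pull-back: $i^{t}\circ\gamma=(\mathrm{id}_{Q}\times i)^{!}(\gamma)$, represented by $\mathbb{P}(E)\cap(Q\times\tilde X)=\mathbb{P}(E|_{\tilde X})$, i.e.\ by the variety of pairs $(A,\tilde U)$ with the isotropic $F$-line $A$ contained in the totally isotropic subspace $W\supset L$ representing $\tilde U$. Unwinding $\tilde\gamma\circ\alpha^{t}$ over $Q\times\tilde Q\times\tilde X$ produces the pairs $(A,\tilde U)$ admitting a $\tilde q$-isotropic line $\bar B=B/L\subset\tilde U$ with $A\subset B$; since such a $B$ must equal $A+L$ when $A\not\subset L$, the projection to $Q\times\tilde X$ is generically injective with the same image $\mathbb{P}(E|_{\tilde X})$. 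A dimension count shows both intersections are proper of the expected dimension, so both sides equal the reduced class of $\mathbb{P}(E|_{\tilde X})$.

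For the first identity $\gamma\circ\alpha=\beta\circ\tilde\gamma$ the two supports in $\tilde Q\times X$ are, respectively, the pairs $(\bar B,U)$ admitting an isotropic $F$-line $A\subset B\cap U$, and the pairs $(\bar B,U)$ for which some maximal totally isotropic $W\supset B$ satisfies $\dim_{K}(U\cap W)\ge r-1$. These coincide set-theoretically: if such a $W$ exists, then inside the $2r$-dimensional $F$-space $W$ the subspaces $B$ (of $F$-dimension $3$) and $U\cap W$ (of $F$-dimension $\ge 2r-2$) meet nontrivially, yielding an isotropic line $A\subset B\cap U$; conversely, given $A\subset B\cap U$, the space $W_{0}=L+(U\cap L^{\perp})$ is totally isotropic, contains $B$ (because $B\subset L^{\perp}$ forces $A\subset U\cap L^{\perp}$), and satisfies $\dim_{K}(U\cap W_{0})\ge r-1$, so any maximal totally isotropic $W\supseteq W_{0}$ works. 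It then remains to see that both compositions carry multiplicity one along this common incidence variety.

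The main obstacle is precisely this last multiplicity bookkeeping for the first identity: in each triple product one must isolate the component of the intersection that dominates the common support (the remaining components, with positive-dimensional fibres, push forward to zero), verify that the relevant intersection is proper and generically reduced, and check that the dominating component maps birationally onto the incidence variety. I expect this, rather than the support matching, to be the delicate point, and I would carry it out over a splitting field where every variety in sight is homogeneous and the tangent-space computation giving transversality is explicit, invoking the behaviour of the incidence correspondence $\alpha$ recorded in \cite[Lemma 72.3]{EKM} where convenient.
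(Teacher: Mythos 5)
Your proposal is correct and takes essentially the same route as the paper: both compositions in each identity are identified with the fundamental class of a single incidence subvariety (the pairs $(B/P,U)$ with $B\cap U\neq 0$ in $\tilde{Q}\times X$, resp.\ the pairs $(A,W/L)$ with $A\subset W$ in $Q\times \tilde{X}$), which is exactly how the paper argues. The multiplicity bookkeeping you defer at the end is precisely what the paper (following the quadratic analogue \cite[Lemma 86.7]{EKM}) dispatches by invoking \cite[Corollary 57.22]{EKM}, so that remaining step can be closed by the same citation rather than an explicit transversality computation.
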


\begin{proof}
The proof is almost the same as in the case of quadratic forms, see \cite[Lemma 86.7]{EKM}.
By \cite[Corollary 57.22]{EKM}, it suffices to check the required identities at the level of cycles representing the correspondences.
By definition of the composition of correspondences, the composition $\gamma\circ \alpha$ and $\beta \circ \tilde{\gamma}$ coincide with
the cycle of the subscheme of $\tilde{Q}\times X$ consisting of all pairs $(B/P,U)$
with $\text{dim}_F(B+U)\leq 2r+2$
and the compositions $\tilde{\gamma}\circ \alpha^t$ and 
$i^t \circ \gamma$ coincide with the cycle of the subscheme of $Q\times \tilde{X}$ consisting of all pairs $(A,W/L)$ with $A\subset W$.
\end{proof}

\section{Split maximal unitary grassmannian}

We use notation introduced in Section 3.

In this section, we make the assumption that the non-degenerate $K/F$-hermitian form $h$ is split
and we provide a description of the Chow ring $\text{Ch}_K(X)$ of the maximal unitary grassmannian in terms of generators and relations (Theorem 4.9 and Proposition 4.15). 

The method is the one introduced 
by A.\,Vishik in \cite{Grass} to get the description of the Chow ring modulo $2$ of the maximal orthogonal grassmannian in the split case, except
that we work with the Chow rings $\text{Ch}_K$ and use Proposition 3.4 in replacement of
the filtration by affine bundles on the maximal orthogonal grassmannian. 

The exposition below closely follows the one given in the corresponding part of \cite[\S 86]{EKM} in the case of quadratic forms.
The proofs are very akin to the original ones in \cite[\S 86]{EKM}.

\medskip

Let us write $\text{dim}(h)=2n+2$ or $2n+1$, with $n\geq 0$. 
Note that since $i_0(q)=2i_0(h)$ (see \cite[Lemma 9.1]{UG}), the quadratic form $q$ is split if and only if
$h$ is of even dimension.
In both cases, $\mathbb{P}(E)$ has codimension $2n+1$ in $Q\times X$.

\medskip

If $\text{dim}(h)=2n+2$, the cycle $\gamma$ decomposes as
\begin{equation}
\gamma={l_{2n+1}\times e_0} + {l'_{2n+1}\times e'_0} + {\sum_{k=1}^{2n+1} h^{2n+1-k}\times e_k}
\,\,\,\,\, \text{in}\,\,\, \text{CH}^{2n+1}(Q\times X)
\end{equation}
for some unique elements
$e_k\in \text{CH}^k(X)$, $k\in [0,\;2n+1]$ and $e'_0\in \text{CH}^0(X)$, with $l_{2n+1}$, $l'_{2n+1}$ the two different classes 
of $(2n+1)$-dimensional totally $q$-isotropic subspaces of $\mathbb{P}_F(V)$ and $h^i$ the $i$-th power of the 
pull-back $h^1\in \text{CH}^1(Q)$ of the hyperplane
class $H \in \text{CH}^1(\mathbb{P}_F(V))$ under 
the closed embedding $em : Q\hookrightarrow \mathbb{P}_F(V)$
(see \cite[Propositions 64.3, 68.1 and 68.2]{EKM}).
The variety $X$
is connected because the algebraic unitary group $\text{U}(h)$ is connected and acts transitively on it.
Hence, pulling (4.1) back with respect to the canonical
morphism $Q_{F(X)}\rightarrow Q\times X$, we see that one can choose an orientation of 
$Q$ such that
$e_0=1$ and $e'_0=0$.

\medskip

Otherwise -- if $\text{dim}(h)=2n+1$ -- the cycle $\gamma$ decomposes as
\begin{equation}
\gamma= \gamma'+ {l_{2n-1}\times e_0} + {\sum_{k=2}^{2n+1} h^{2n+1-k}\times e_k}
\,\,\,\,\, \text{in}\,\,\, \text{CH}^{2n+1}(Q\times X)
\end{equation}
for some unique elements $e_k\in \text{CH}^k(X)$, $k\in [2,\;2n+1]$ and $e_0\in \text{CH}^0(X)$,
with $\gamma'$ a cycle such that,
by denoting $p_X$ the projection $Q\times X \rightarrow X$, one has
${p_X}_{\ast}\left((l_{2n+1-k}\times 1)\cdot \gamma'\right)=0$ for any $k\in [2,\;2n+1]$
and ${p_X}_{\ast}\left((h^{2n-1}\times 1)\cdot \gamma'\right)=0$.
Pulling (4.2) back with respect to $Q_{F(X)}\rightarrow Q\times X$, one get that 
$e_0=1$.

\medskip

The multiplication rules in the ring $\text{CH}(Q)$ (see \cite[Proposition 68.1]{EKM}) gives that 
$e_k={p_X}_{\ast}\left((l_{2n+1-k}\times 1)\cdot \gamma\right)$,
for $k\in [2,\;2n+1]$
and also for $k=1$ if $\text{dim}(h)$ is even.
In other words, the correspondence
$\gamma$ satisfies $\gamma_{\ast}(l_{2n+1-k})=e_k$ for $k\in [2,\;2n+1]$
and also for $k=1$  for $h$ of even dimension.
Note also that, in the even-dimensional case, one has $\gamma_{\ast}(h^{2n+1})=1$
%since $l_{2n+1}^2=0$ as $4$ does not divide $\text{dim}(Q)=4n+2$ (see \cite[Exercise 68.3(3)]{EKM}),
and that, in the odd-dimensional case, one has $\gamma_{\ast}(h^{2n-1})=1$.

 \medskip

The relations of the previous paragraph between the correspondence $\gamma$ and the elements $e_k$
can be rewritten as
\begin{equation}
e_k=(p_X\circ in)_{\ast}\circ (p_Q\circ in)^{\ast}(l_{2n+1-k}),
\end{equation}
for $k\in [2,\;2n+1]$ and also for $k=1$ for $h$ of even dimension,
with $p_Q$ the projection $Q\times X \rightarrow Q$ (see \cite[Proposition 62.7]{EKM}).

\begin{lemme}
\textit{One has $e_{2n+1}=[\tilde{X}]$ in} $\text{CH}^{2n+1}(X)$.
\end{lemme}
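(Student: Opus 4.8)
The plan is to compute $e_{2n+1}$ geometrically, starting from the instance $k=2n+1$ of formula (4.3), namely
\[
e_{2n+1}=\gamma_\ast(l_0)=(p_X\circ in)_\ast\circ (p_Q\circ in)^\ast(l_0),
\]
which is valid in both parities. The key observation is that the bottom class $l_0\in\text{CH}_0(Q)$ is the class of a rational point of $Q$, i.e. of an $F$-rational $q$-isotropic line, and that any two such are rationally equivalent. Since $h$ is split I may therefore choose as representative an $F$-line $A_0$ contained in the totally $q$-isotropic plane $P=L$, so that $l_0=[A_0]$.

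Next I would identify the fibre of $p_Q\circ in$ over $A_0$. A point of $\mathbb{P}(E)$ is a pair $(A,U)$ with $U\in X$ and $A$ an $F$-line contained in $U$, and $p_Q\circ in$ sends it to the isotropic line $A\in Q$; hence $(p_Q\circ in)^{-1}(A_0)=\{(A_0,U)\,:\,A_0\subseteq U\}$. Since each $U$ is a $K$-subspace of $V$, the inclusion $A_0\subseteq U$ forces $K\!\cdot\!A_0\subseteq U$, and because $A_0$ is a nonzero $F$-line inside the $K$-line $L$ one has $K\!\cdot\!A_0=L$; thus $A_0\subseteq U$ is equivalent to $L\subseteq U$. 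Therefore the fibre is carried isomorphically by $p_X\circ in$ onto $\{U\in X:L\subseteq U\}$, which is precisely $\tilde{X}$ under the identification of $\tilde{X}$ with the maximal totally isotropic subspaces of $V$ containing $L$.

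It then remains to show that $(p_Q\circ in)^\ast(l_0)$ is the fundamental class of that fibre with multiplicity one, for then applying $(p_X\circ in)_\ast$ yields $[\tilde{X}]$. First I would check that $p_Q\circ in\colon \mathbb{P}(E)\to Q$ is flat: the source is smooth, $Q$ is regular, and every fibre has the same dimension $\dim\mathbb{P}(E)-\dim Q=\dim X-(2n+1)=\dim\tilde{X}$, the equidimensionality coming from the fact that, $U$ being a $K$-subspace, the fibre over $A$ depends only on the isotropic $K$-line $K\!\cdot\!A$, and these form a single orbit under the automorphism group of the split form $h$; miracle flatness then gives $(p_Q\circ in)^\ast(l_0)=[(p_Q\circ in)^{-1}(A_0)]$. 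The genuine obstacle is the multiplicity-one statement, i.e. that this scheme-theoretic fibre is generically reduced along $\tilde{X}$; equivalently, that the proper intersection $(l_0\times 1)\cdot\gamma$ in $Q\times X$, whose support $\{A_0\}\times\tilde{X}$ has dimension $\dim\tilde{X}=(\dim Q+\dim X)-(\dim Q+2n+1)$ so that the codimensions add up, is transverse at its generic point. As $\mathbb{P}(E)$ and $\tilde{X}$ are smooth, this reduces to a tangent-space computation at a generic pair $(A_0,U)$, carried out exactly as in the quadratic-form case \cite[\S 86]{EKM}; granting it, $(p_X\circ in)_\ast(p_Q\circ in)^\ast(l_0)=[\tilde{X}]$, which is the claim.
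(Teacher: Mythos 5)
Your proposal is correct and follows essentially the same route as the paper: both start from the case $k=2n+1$ of (4.3), represent $l_0$ by a rational point $A$ of $Q$, and use the fact that a totally isotropic $K$-subspace $U$ contains the $F$-line $A$ if and only if it contains the $K$-line $A\otimes_F K$, which may be taken to be $L$, so that the resulting cycle is $[\tilde{X}]$. You spell out the flatness and multiplicity-one issues that the paper passes over silently, but this is elaboration of the same argument rather than a different one.
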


\begin{proof}
On the one hand, by (4.3) one has $(p_X\circ in)_{\ast}\circ (p_Q\circ in)^{\ast}(l_{0})=e_{2n+1}$.
On the other hand, by denoting $A$ a closed point of $Q$ of degree $1$, the cycle $(p_X\circ in)_{\ast}\circ (p_Q\circ in)^{\ast}(l_{0})$ is 
identified with $[\{U\;|\;A\subset U\}]=[\{U\;|\;A\otimes_F K \subset U\}]$ in $\text{CH}^{2n+1}(X)$.
The latter algebraic cycle is equal to $[\{U\;|\; L \subset U\}]=[\tilde{X}]$.
\end{proof}

\medskip

Let us denote by $\tilde{e}_k \in \text{CH}^k(\tilde{X})$ the elements given by (4.1) or (4.2) for $\tilde{X}$.
Similarly, the correspondence $\tilde{\gamma}$ satisifies
$\tilde{\gamma}_{\ast}(\tilde{l}_{2n-1-k})=\tilde{e}_k$ for
the appropriate integers $k$
with respect to the parity of $\text{dim}(h)$.

The following statement is a direct consequence of Lemma 3.5.

\begin{lemme}
\textit{For all $k\in [0,\;2n-1]\backslash \{1\}$ and also for $k=1$  for $h$ of even dimension, one has}
\begin{enumerate}[(i)]
\item \textit{ $\beta_{\ast}(\tilde{e}_k)=e_k$ in} $\text{CH}(X)$ \textit{;}
\item \textit{  $i^{\ast}(e_k)=\tilde{e}_k$ in} $\text{CH}(\tilde{X})$ \textit{.}
%\item \textit{ $e_1=0$ in} $\text{CH}(X)$ \textit{for odd-dimensional $h$.}
%\backslash \{1\}$
 %and also for $k=1$  for $h$ of even dimension.}
\end{enumerate}
\end{lemme}

\begin{proof}
(i) For $k\in [2,\;2n-1]$ and also for $k=1$  for $h$ of even dimension, one has
\[\beta_{\ast}(\tilde{e}_k)=\beta_{\ast} \circ \tilde{\gamma}_{\ast}(\tilde{l}_{2n-1-k})=
\gamma_{\ast}\circ \alpha_{\ast}(\tilde{l}_{2n-1-k})=\gamma_{\ast}(l_{2n+1-k})=e_k.\]
For $k=0$, one can make the same computation with $\tilde{h}^{2n-1}$ and $h^{2n+1}$
if $h$ is even-dimensional and with $\tilde{h}^{2n-3}$ and $h^{2n-1}$ if
$h$ is odd-dimensional (the incidence correspondence $\alpha$ also satisfies $\alpha_{\ast}(\tilde{h}^i)=h^{i+2}$
for $0\leq i\leq \lfloor \text{dim}(Q)/2\rfloor-2 $, where $\tilde{h}^i$ is the $i$-th power of the hyperplane class $\tilde{h}^1\in
\text{CH}^1(\tilde{Q})$).  

 %and even-dimensional $h$, one can make the exact same computation.
%For $k=0$ and odd-dimensional $h$, one has to replace $\tilde{l}_{2n-1}$
%and $l_{2n+1}$ by  $\tilde{h}^{2n-3}$ and $h^{2n-1}$ respectively in the previous computation

(ii) For $k\in [2,\;2n-1]$ and also for $k=1$  for $h$ of even dimension, one has
\[\begin{array}{rl}
i^{\ast}(e_k)=i^t_{\ast}(e_k)=i^t_{\ast}\circ \gamma_{\ast}(l_{2n+1-k})
=\tilde{\gamma}_{\ast}\circ {\alpha}^t_{\ast}(l_{2n+1-k})= &
\tilde{\gamma}_{\ast}\circ {\alpha}^{\ast}(l_{2n+1-k})\\
=& \tilde{\gamma}_{\ast}(\tilde{l}_{2n-1-k})=\tilde{e}_k.\end{array}\]
%(iii) We induct on $n$. If $n=0$, i.e. $\text{dim}(h)=1$, one has $e_1=0$
%because $\text{dim}(X)=0$ since $X=\text{Spec}(F)$. The conclusion follows now from (i) by induction. 
\end{proof}

For $I\subset [0,\;2n+1]$ 
or $I\subset [0,\;2n+1]\backslash \{1\}$, depending on whether the dimension of $h$ is respectively even or odd, 
we write $e_I$ for the product of $e_k$ for all $k\in I$.
Similarly, one defines the elements $\tilde{e}_J$ for
$J\subset [0,\;2n-1]$ 
or $J\subset [0,\;2n-1]\backslash \{1\}$.
The following statement is obtained by combining Lemma 4.5(ii) with the Projection Formula and 
{Lemma 4.4}.

\begin{cor}
\textit{One has $i_{\ast}(\tilde{e}_J)=e_J\cdot e_{2n+1}=e_{J\cup\{2n+1\}}$ for every $J\subset [0,\;2n-1]$
or for every $J\subset [0,\;2n-1]\backslash \{1\}$, depending on whether the dimension of $h$ is respectively even or odd.}
\end{cor}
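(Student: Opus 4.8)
The plan is to reduce the whole statement to a single application of the Projection Formula, using the two preceding results to control the two factors that appear. First I would exploit that $i^{\ast}$ is a ring homomorphism together with Lemma 4.5(ii) in order to rewrite the pullback of the product $e_J$. For every index $k$ lying in the relevant range (that is, $k\in[0,\;2n-1]\backslash\{1\}$, together with $k=1$ when $h$ has even dimension), Lemma 4.5(ii) gives $i^{\ast}(e_k)=\tilde{e}_k$, so that
\[
i^{\ast}(e_J)=\prod_{k\in J}i^{\ast}(e_k)=\prod_{k\in J}\tilde{e}_k=\tilde{e}_J .
\]
Thus the class $\tilde{e}_J$ is exactly the image of $e_J$ under $i^{\ast}$.

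Next I would feed this into the Projection Formula (\cite[Proposition 56.9]{EKM}) for the closed embedding $i:\tilde{X}\hookrightarrow X$. Writing $\tilde{e}_J=i^{\ast}(e_J)$ and applying the formula to the product $i^{\ast}(e_J)\cdot 1$ yields
\[
i_{\ast}(\tilde{e}_J)=i_{\ast}\bigl(i^{\ast}(e_J)\cdot 1\bigr)=e_J\cdot i_{\ast}(1).
\]
It then remains to identify $i_{\ast}(1)$. Since $i_{\ast}(1)=[\tilde{X}]$ is the class of the image of the closed embedding, Lemma 4.4 gives $i_{\ast}(1)=[\tilde{X}]=e_{2n+1}$, whence $i_{\ast}(\tilde{e}_J)=e_J\cdot e_{2n+1}$. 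Finally, because $J\subset[0,\;2n-1]$ (or $J\subset[0,\;2n-1]\backslash\{1\}$) does not contain the index $2n+1$, the definition of $e_I$ as the product of the $e_k$ over $k\in I$ gives $e_J\cdot e_{2n+1}=e_{J\cup\{2n+1\}}$, which is the desired identity.

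There is essentially no serious obstacle here: once Lemma 4.4, Lemma 4.5(ii) and the Projection Formula are in hand, the argument is purely formal. The only point requiring a small amount of care is the bookkeeping on the index set, namely verifying that Lemma 4.5(ii) genuinely applies to every $k\in J$ in both the even- and odd-dimensional cases, and that $2n+1\notin J$, so that the product $e_{J\cup\{2n+1\}}$ is formed without repetition.
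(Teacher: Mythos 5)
Your proof is correct and follows exactly the route the paper intends: the paper states that the corollary ``is obtained by combining Lemma 4.5(ii) with the Projection Formula and Lemma 4.4,'' which is precisely your chain $i_{\ast}(\tilde{e}_J)=i_{\ast}(i^{\ast}(e_J))=e_J\cdot i_{\ast}(1)=e_J\cdot[\tilde{X}]=e_J\cdot e_{2n+1}$. Nothing further is needed.
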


%\begin{proof}
%Since one has $i^{\ast}(e_J)=\tilde{e}_J$ by Lemma 4.5, it follows from Lemma 4.4 and the Projection
%Formula that
%\[i_{\ast}(\tilde{e}_J)=i_{\ast}\left(i^{\ast}(e_J)\cdot 1\right)=
%e_J\cdot i_{\ast}(1)=e_J\cdot e_{2n+1}=e_{J\cup\{2n+1\}}.\]
%\end{proof}

Let us denote by $I_{\text{od}}$ the odd part of a set of integers $I$.

\begin{cor}
\textit{For $h$ of even dimension, the monomial $e_{[1,\;2n+1]_{\text{od}}}=e_1e_3\cdots e_{2n+1}$ is the class of a rational point in} $\text{CH}_0(X)$.
\textit{For $h$ of odd dimension, the monomial $e_{{[3,\;2n+1]}_{\text{od}}}=e_3e_5\cdots e_{2n+1}$ is the class of a rational point in} $\text{CH}_0(X)$.
\end{cor}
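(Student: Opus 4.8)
The plan is to reduce the statement to a computation of degree, exploiting the structure we have already built. Recall from Corollary 4.6 that for any admissible index set $J$ one has $i_{\ast}(\tilde e_J)=e_{J\cup\{2n+1\}}$, and from Lemma 4.4 that $e_{2n+1}=[\tilde X]$. The key observation is that the monomial in question, say $e_{[1,\,2n+1]_{\mathrm{od}}}$ in the even case, factors as $e_{2n+1}$ times a monomial $e_{J}$ supported on the remaining odd indices $J=[1,\,2n-1]_{\mathrm{od}}$. By Corollary 4.6 this equals $i_{\ast}(\tilde e_J)$, where $\tilde e_J$ lives on $\tilde X$. So I would set up an induction on $n$: the top monomial on $X$ is the push-forward along $i$ of the corresponding top monomial on $\tilde X$, and the base case can be handled directly.

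First I would treat the base case. In the even-dimensional situation, the smallest case is $n=0$, i.e. $\dim(h)=2$: here the monomial is just $e_1$, and the remark following Lemma 4.4 already records that $e_1$ is the class of a rational point on the curve $X$. In the odd-dimensional situation the relevant monomial $e_{[3,\,2n+1]_{\mathrm{od}}}$ is empty when $n=1$ (so equals $1$ in $\mathrm{CH}_0$ of a point, since $\dim(h)=3$ forces $X$ to be zero-dimensional with a rational point), and equals $e_3$ when $n=2$. The arithmetic of which indices survive is dictated by the fact that $e_1$ is excluded from the generating set precisely in the odd-dimensional case, consistent with the index conventions fixed just before Corollary 4.6.

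For the inductive step I would argue as follows. Write the top monomial on $X$ as $e_{2n+1}\cdot e_{J}$ with $J$ the odd indices below $2n+1$. By Corollary 4.6 this equals $i_{\ast}(\tilde e_J)$, and $\tilde e_J$ is exactly the analogous top monomial for $\tilde X$ associated with the subform $\tilde h$, which has $\dim(\tilde h)=\dim(h)-2$. By the induction hypothesis $\tilde e_J$ is the class of a rational point in $\mathrm{CH}_0(\tilde X)$. Since $i\colon\tilde X\hookrightarrow X$ is a closed embedding of smooth projective $F$-varieties, the proper push-forward $i_{\ast}$ of the class of a rational point is the class of a rational point on $X$; here one uses that pushing forward a zero-cycle preserves degree, so a degree-one point maps to a degree-one point. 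This yields that $e_{2n+1}\cdot e_J$ is the class of a rational point in $\mathrm{CH}_0(X)$.

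I expect the main obstacle to be purely bookkeeping rather than conceptual: one must verify that the index set $J$ arising after splitting off $e_{2n+1}$ is genuinely the admissible top-degree index set for $\tilde X$ in the appropriate parity (and lies in the range $[0,\,2n-1]$ or $[0,\,2n-1]\setminus\{1\}$ to which Corollary 4.6 applies), and that the dimension shift $\dim(\tilde h)=\dim(h)-2$ matches the induction parameter correctly in both the even and odd cases. The one genuinely delicate point is confirming that the resulting cycle has degree exactly one rather than an even multiple: this follows because $i_{\ast}$ preserves the degree of a closed point and the induction guarantees degree one at each stage, so no factor of $2$ is introduced. Everything else is an application of the Projection Formula together with Lemma 4.4 and Corollary 4.6, all established above.
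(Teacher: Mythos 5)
Your argument is exactly the paper's proof: an induction on $n$ using the identity $e_{J\cup\{2n+1\}}=i_{\ast}(\tilde e_J)$ from Corollary 4.6, with the base case supplied by Lemma 4.4 and the remark following it, and the observation that a closed embedding pushes a degree-one zero-cycle to a degree-one zero-cycle. The only slip is in your odd-dimensional base case: the empty monomial occurs at $n=0$ (where $\dim(h)=1$ and $X=\mathrm{Spec}(F)$), not at $n=1$ (where $\dim(h)=3$, $\dim(X)=3$, and the monomial is $e_3$); this mislabeling does not affect the induction.
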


\begin{proof}
We induct on $n$. 
If $\text{dim}(h)=1$, one has $X=\text{Spec}(F)$. 
If $\text{dim}(h)=2$, the cycle $e_1$ is the class of a rational point on the curve $X$, see Lemma 4.4.
The conclusion follows from the formulas
$e_{[1,\;2n+1]_{\text{od}}}=i_{\ast}(\tilde{e}_{[1,\;2n-1]_{\text{od}}})$ for $h$ of even dimension and 
$e_{{[3,\;2n+1]}_{\text{od}}}=i_{\ast}(\tilde{e}_{{[3,\;2n-1]}_{\text{od}}})$ for $h$ of odd dimension, see Corollary 4.6.
\end{proof}

\begin{lemme}
\textit{One has $(f^{\ast})_K (\tilde{e}_J)=(j^{\ast})_K(e_J)$ for any $J\subset [0,\;2n-1]$
 or for any 
$J\subset [0,\;2n-1]\backslash \{1\}$, depending on whether the dimension of $h$ is respectively even or odd.}
\end{lemme}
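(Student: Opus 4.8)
The plan is to turn the statement into a congruence in $\mathrm{CH}(X)$ modulo the norm\nobreakdash-type ideal $\mathrm{Im}(i_\ast)$ and then to run an induction on $|J|$, paralleling the corresponding step of \cite[\S 86]{EKM}. First I would unwind $f^\ast$ by correspondence calculus. Since $f=\beta^t\circ j$, the composition and transposition rules for the actions of correspondences (\cite[\S 62]{EKM}) give
\[
f^\ast=j^\ast\circ(\beta^t)^\ast=j^\ast\circ\beta_\ast,
\]
because transposing a correspondence interchanges its two actions and $j^\ast$ is the ordinary pull-back along the open immersion $j$. Hence $f^\ast(\tilde{e}_J)=j^\ast\bigl(\beta_\ast(\tilde{e}_J)\bigr)$, and, since $\mathrm{Ker}(j^\ast)=\mathrm{Im}(i_\ast)$ by the localization exact sequence (\cite[\S 52.D]{EKM}), the desired identity $f^\ast(\tilde{e}_J)=j^\ast(e_J)$ is equivalent to the congruence
\[
\beta_\ast(\tilde{e}_J)\equiv e_J \pmod{\mathrm{Im}(i_\ast)}.
\]
This is the form I would actually establish; note that $\mathrm{Im}(i_\ast)$ is an ideal by the Projection Formula (\cite[Proposition 56.9]{EKM}), so the right-hand side is well defined as a product.

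Next I would induct on $|J|$. For $|J|\le 1$ there is nothing beyond what is at hand: $\beta_\ast(1)=\beta_\ast(\tilde{e}_0)=e_0=1$ and $\beta_\ast(\tilde{e}_k)=e_k$ are exactly Lemma 4.5(i), so the congruence holds on the nose. For the inductive step I would pick $k\in J$, set $J'=J\backslash\{k\}$, and invoke Lemma 4.5(ii) to write $\tilde{e}_J=\tilde{e}_k\cdot\tilde{e}_{J'}=i^\ast(e_k)\cdot\tilde{e}_{J'}$. The whole step then rests on the projection-formula--type congruence
\[
\beta_\ast\bigl(i^\ast(e_k)\cdot\tilde{e}_{J'}\bigr)\equiv e_k\cdot\beta_\ast(\tilde{e}_{J'}) \pmod{\mathrm{Im}(i_\ast)},
\]
that is, on being allowed to pull the factor $e_k$ out of $\beta_\ast$ while replacing $i^\ast(e_k)$ by $e_k$ modulo $\mathrm{Im}(i_\ast)$. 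Granting this, the induction closes at once: by the inductive hypothesis $\beta_\ast(\tilde{e}_{J'})\equiv e_{J'}$, and since $\mathrm{Im}(i_\ast)$ is an ideal we obtain $\beta_\ast(\tilde{e}_J)\equiv e_k\cdot e_{J'}=e_J$.

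The hard part is precisely this projection-formula congruence, and it is genuinely specific to the classes $e_k$: because the composite correspondence $\beta\circ i^t$ is supported over $\tilde{X}$, the operator $\beta_\ast\circ i^\ast$ is \emph{not} congruent to the identity modulo $\mathrm{Im}(i_\ast)$ in general, so the argument must exploit the particular nature of the $e_k$. I would prove it at the level of cycles, following \cite[\S 86]{EKM}: express both sides as push-forwards along $p_X\colon\tilde{X}\times X\to X$ of products involving a cycle representing $\beta$, and compare the two pull-backs of $e_k$ along the maps $(W/L,U)\mapsto W$ and $(W/L,U)\mapsto U$, whose discrepancy on the support of $\beta$ is constrained by the incidence condition $\dim_K(U+W)\le r+1$. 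The compatibilities $\gamma\circ\alpha=\beta\circ\tilde{\gamma}$ and $\tilde{\gamma}\circ\alpha^t=i^t\circ\gamma$ of Lemma 3.7, together with the defining relation $(4.3)$ of the $e_k$, are exactly what is needed to identify the resulting classes. The geometric mechanism behind the congruence is that over the open subvariety of $X\backslash\tilde{X}$ where $U\not\subset L^{\perp}$ the correspondence $f$ is the graph of the honest morphism $U\mapsto\bigl((U\cap L^{\perp})+L\bigr)/L$; there $f^\ast$ is a ring homomorphism and the claim is immediate from Lemma 4.5(i), while transcribing the cycle-level analysis of \cite[\S 86]{EKM} handles the complementary closed locus.
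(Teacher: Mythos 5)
Your reduction of the lemma to the congruence $\beta_{\ast}(\tilde{e}_J)\equiv e_J \pmod{\mathrm{Im}(i_{\ast})}$ and the induction on $|J|$ are sound as far as they go, but the entire content of the lemma sits in the step you yourself call the hard part, namely
\[
\beta_{\ast}\bigl(i^{\ast}(e_k)\cdot\tilde{e}_{J'}\bigr)\equiv e_k\cdot\beta_{\ast}(\tilde{e}_{J'})\pmod{\mathrm{Im}(i_{\ast})},
\]
and this step is announced rather than proved. Applying $j^{\ast}$ to both sides shows it is exactly equivalent to $f^{\ast}(\tilde{e}_k\cdot\tilde{e}_{J'})=f^{\ast}(\tilde{e}_k)\cdot f^{\ast}(\tilde{e}_{J'})$, i.e.\ to the multiplicativity of $f^{\ast}$ on these classes; so the induction only reformulates the problem, and the cycle-level plan you sketch (comparing the two pull-backs of $e_k$ over the support of $\beta$ via Lemma 3.7 and (4.3)) is neither carried out nor the mechanism that actually makes the statement true.

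The correct mechanism is the one you reach only in your final sentence, and it applies on all of $X\backslash\tilde{X}$, not merely on a proper open subset: for a maximal totally isotropic $U$ one has $U\subset L^{\perp}$ if and only if $L\subset U$ (otherwise $U+L$ would be totally isotropic of dimension $r+1$, which is impossible), so every $U\in X\backslash\tilde{X}$ satisfies $U\not\subset L^{\perp}$ and the ``complementary closed locus'' you propose to handle by a separate cycle analysis is empty. Consequently the restriction of $\beta^{t}$ over $X\backslash\tilde{X}$ is precisely the graph of the morphism $U\mapsto\bigl((U\cap L^{\perp})+L\bigr)/L$, hence $f^{\ast}$ is an honest pull-back, in particular a ring homomorphism, and the lemma follows from Lemma 4.5(i) with no induction at all. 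This is in substance the paper's proof: $f^{\ast}$ preserves products by \cite[Proposition 62.8(2)]{EKM} together with (3.4), and the case of a single $k$ is Lemma 4.5(i). As written, your proposal has a genuine gap at its pivotal step, compounded by a mistaken picture of the geometry of $\beta^{t}$ over $X\backslash\tilde{X}$.
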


\begin{proof}
By Lemma 4.5(i), it suffices to prove 
that $(f^{\ast})_K$ preserves products to get the conclusion.
Since the composition of correspondences $i^{t}\circ \beta$ coincides with the class of the diagonal in $\text{CH}(\tilde{X}^2)$, one has $i^{\ast}\circ \beta_{\ast}=\text{Id}_{\text{CH}(\tilde{X})}$.
Let $\pi$ denote the quotient map $\text{Ch}_K(X)\rightarrow \text{Ch}_K(X)/ \text{Im}\left((i_{\ast})_K \right)$ and let $\overline{(i^{\ast})_K}$ and $\overline{(j^{\ast})_K}$ be the homomorphisms such that
$\overline{(i^{\ast})_K}\circ \pi=(i^{\ast})_K$ and $\overline{(j^{\ast})_K}\circ \pi=(j^{\ast})_K$
(note that $(i^{\ast})_K\circ (i_{\ast})_K=0$ so $\overline{(i^{\ast})_K}$ is well defined).
One has
\[\overline{(i^{\ast})_K}\circ \left(\overline{(j^{\ast})_K}\right)^{-1}\circ (j^{\ast})_K\circ (\beta_{\ast})_K=
\overline{(i^{\ast})_K}\circ \pi \circ (\beta_{\ast})_K 
=\text{Id}_{\text{Ch}_K(\tilde{X})},\]
that is, the morphism $\overline{(i^{\ast})_K}\circ \left(\overline{(j^{\ast})_K}\right)^{-1}$ is the left inverse of $(f^{\ast})_K$.
Let $x, y\in\text{Ch}_K(\tilde{X})$ and let $z$ be the element of $\text{Ch}_K(\tilde{X})$ such that
\[(f^{\ast})_K(z)=(f^{\ast})_K(x)\cdot (f^{\ast})_K(y).\]
Then, composing the previous identity by $\overline{(i^{\ast})_K}\circ \left(\overline{(j^{\ast})_K}\right)^{-1}$, one get
\[z=\overline{(i^{\ast})_K}\circ \left(\overline{(j^{\ast})_K}\right)^{-1} \circ (j^{\ast})_K \left((\beta_{\ast})_K(x)\cdot (\beta_{\ast})_K(y)  \right),\]
that is
\[z=(i^{\ast})_K\circ (\beta_{\ast})_K(x)\cdot (i^{\ast})_K\circ (\beta_{\ast})_K(y),\]
i.e., 
$z=x\cdot y$.
Therefore, the morphism $(f^{\ast})_K$ preserves products. The lemma is proven.
\end{proof}

%Since $f=\beta^t \circ j$, it follows from \cite[Proposition 62.8(2)]{EKM} and (3.4) that $f^{\ast}$ preserves %products.
%Hence, it is enough to show that $f^{\ast}(\tilde{e}_k)=j^{\ast}(e_k)$
%for the appropriate $k$, which is true by Lemma 4.5(i).
%that
%$f^{\ast}(\tilde{e}_k)=j^{\ast}\circ (\beta^t)^{\ast} (\tilde{e}_k)=j^{\ast}\circ \beta_{\ast} (\tilde{e}_k)
%=j^{\ast}(e_k)$.

Now, one can prove the $K/F$-hermitian analogue of \cite[Proposition 2.4(1)]{Grass} the same way \cite[Theorem 86.12]{EKM}
has been proven for the case of quadratic forms. We still write $e_I$ for the class of $e_I$ in $\text{Ch}_K(X)$.

\begin{thm}
\textit{Let $K/F$ be a quadratic separable field extension and $h$ a split non-degenerate $K/F$-hermitian
form. Let $X$ be the variety of maximal totally isotropic subspaces in $h$.
If $h$ is of even dimension $2n+2$ then the set of all $2^{n+1}$ monomials $e_{I}$ for all subsets $I\subset [0,\; 2n+1]_{od}$ is a basis of
the $\mathbb{Z}/2\mathbb{Z}$-module} $\text{Ch}_K(X)$.
\textit{Otherwise -- if $h$ is of odd dimension $2n+1$ -- the set of all $2^{n}$ monomials $e_{I}$ for all subsets $I\subset [0,\; 2n+1]_{od}\backslash \{1\}$ is a basis of
the $\mathbb{Z}/2\mathbb{Z}$-module} $\text{Ch}_K(X)$.
\end{thm}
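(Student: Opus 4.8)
The plan is to induct on $r=\lfloor \text{dim}(h)/2\rfloor$, using decomposition (3.5) to split $\text{Ch}_K(X)$ into the images of $(\beta_{\ast})_K$ and $(i_{\ast})_K$ and matching the two halves of the proposed family of monomials to these summands according to whether or not $2n+1$ belongs to $I$. Since $h$ is split, the induction on (3.5) already recorded shows that $\text{Ch}_K(X)$ is free over $\mathbb{Z}/2\mathbb{Z}$ of rank $2^{r}$, which is exactly the number of monomials $e_{I_{\text{od}}}$ in the statement; hence it suffices to produce these as a basis. Combining (3.5) with the localization identity $\text{Ker}(j^{\ast})=\text{Im}(i_{\ast})$ recorded just before Proposition 3.6, one has the direct sum
\[\text{Ch}_K(X)=\text{Im}\big((\beta_{\ast})_K\big)\oplus \text{Im}\big((i_{\ast})_K\big),\]
in which $(\beta_{\ast})_K$ and $(i_{\ast})_K$ are injective on $\text{Ch}_K(\tilde{X})$. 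The base case $r=0$ is $X=\text{Spec}(F)$, where $\text{Ch}_K(X)=\mathbb{Z}/2\mathbb{Z}$ has the single monomial $1$; for $r\geq 1$ the form $\tilde{h}$ has the same parity as $h$ and satisfies $\tilde{r}=r-1$, so the inductive hypothesis applies to $\tilde{X}$.

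For the monomials with $2n+1\in I$, I would write $I_{\text{od}}=\{2n+1\}\sqcup J$ with $J$ a set of odd integers below $2n+1$, so that Corollary 4.6 identifies each such monomial with $(i_{\ast})_K(\tilde{e}_{J})$. As $J$ runs over the subsets of odd integers in the range for $\tilde{X}$, the elements $\tilde{e}_{J}$ run over the inductive basis of $\text{Ch}_K(\tilde{X})$; since $(i_{\ast})_K$ is injective, these monomials form a basis of the summand $\text{Im}\big((i_{\ast})_K\big)$.

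For the monomials with $2n+1\notin I$, the crux is the congruence
\[(\beta_{\ast})_K(\tilde{e}_{I_{\text{od}}})\equiv e_{I_{\text{od}}}\pmod{\text{Im}\big((i_{\ast})_K\big)}.\]
To establish it I would apply $j^{\ast}$: from $f=\beta^{t}\circ j$ the correspondence calculus gives $j^{\ast}\circ \beta_{\ast}=f^{\ast}$, whence $j^{\ast}\big(\beta_{\ast}(\tilde{e}_{I_{\text{od}}})\big)=f^{\ast}(\tilde{e}_{I_{\text{od}}})=j^{\ast}(e_{I_{\text{od}}})$ by Lemma 4.8 applied to $J=I_{\text{od}}$, so the difference lies in $\text{Ker}(j^{\ast})=\text{Im}(i_{\ast})$; as all the maps involved preserve the norm ideal, the congruence descends to $\text{Ch}_K$. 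Consequently, under the projection $\text{Ch}_K(X)\twoheadrightarrow \text{Ch}_K(X)/\text{Im}\big((i_{\ast})_K\big)\cong \text{Im}\big((\beta_{\ast})_K\big)$, the monomial $e_{I_{\text{od}}}$ maps to the class of $(\beta_{\ast})_K(\tilde{e}_{I_{\text{od}}})$; since these latter elements form the inductive basis of $\text{Im}\big((\beta_{\ast})_K\big)$, the monomials with $2n+1\notin I$ project to a basis of the quotient. A standard lift-of-basis argument then yields that the two families together constitute a basis of $\text{Ch}_K(X)$, completing the induction.

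I expect the main obstacle to be the congruence, and more precisely the bookkeeping that identifies $j^{\ast}\circ\beta_{\ast}=f^{\ast}$ as the operator of the composite correspondence $f=\beta^{t}\circ j$, together with checking that the pertinent push-forwards and pull-backs preserve the norm ideal so that the identity survives in $\text{Ch}_K$. Once this is secured, the matching of the two monomial families to the two summands of (3.5), combined with the rank count, finishes the proof, and the two parities are treated uniformly since the only difference is the absence of the index $k=1$ in the odd-dimensional case.
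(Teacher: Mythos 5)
Your proposal is correct and follows essentially the same route as the paper: an induction on the Witt index using the localization sequence $0\to \text{Ch}_K(\tilde{X})\to \text{Ch}_K(X)\to \text{Ch}_K(X\backslash\tilde{X})\to 0$ together with decomposition (3.5), with Corollary 4.6 handling the monomials containing $2n+1$ and Lemma 4.8 (via $f^{\ast}=j^{\ast}\circ\beta_{\ast}$ and Proposition 3.6) handling the rest. Your congruence modulo $\text{Im}\bigl((i_{\ast})_K\bigr)$ is just a repackaging of the paper's statement that the $(j^{\ast})_K(e_{I_{od}})$ with $2n+1\notin I$ form a basis of $\text{Ch}_K(X\backslash\tilde{X})$, so no genuinely new ingredient or gap is involved.
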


\begin{proof}
We use an induction on $n$. If $\text{dim}(h)=2$, the $\mathbb{Z}/2\mathbb{Z}$-module
$\text{Ch}_K^1(X)$ is generated by the class $e_1$ of a rational point (see decomposition (3.3) and Lemma 4.4). 
%Since $i_{\ast}$ is injective,
% the statement follows from Lemma 4.4.
If $\text{dim}(h)=1$, the statement is obviously true since $X=\text{Spec}(F)$.

Consider the following exact sequence 
\[\xymatrix{0 \ar[r] & \text{Ch}_K(\tilde{X})\ar[r]^{{(i_{\ast})}_K} & \text{Ch}_K(X)\ar[r]^{(j^{\ast})_K} &\text{Ch}_K(X\backslash \tilde{X})\ar[r] &0},\]
associated with the localization exact sequence for classical Chow groups.
In both cases, the induction hypothesis and Corollary 4.6 imply that the set of monomials $e_{I}$
for all $I$ containing $2n+1$ is a basis of the image of ${(i_{\ast})}_K$.
On the other hand, Lemma 4.8, Proposition 3.4 and the induction 
hypothesis imply that, in both cases, the set of all the elements $(j^{\ast})_K(e_{I})$ with $2n+1 \notin I$ 
is a basis of $\text{Ch}_K(X\backslash \tilde{X})$. The conclusion follows.
\end{proof}

Also, one has $e_{2k}=0$ in $\text{Ch}_K(X)$ for all $k\geq 2$ and also for $k=1$ if $\text{dim}(h)$ is even, see (4.13) and Corollary 4.14.

%The next statement provides formulas for the Chern classes of the vector bundle $V\mathbbm{1}/E$
%and $E$ over $X$
%which will be used to describe the multiplicative structure of $\text{Ch}_K(X)$.

\begin{prop} 
\textit{In} $\text{CH}(X)$, \textit{one has}
%\textit{-- if} $\text{dim}(h)=2n+2$ 
\textit{ $c_k(V\mathbbm{1}/E)=2e_k$} 
%\textit{and $c_k(E)=(-1)^k2e_k$}
\textit{for all $k\in [2,\;2n+1]$, $c_{2n+2}(V\mathbbm{1}/E)=0$ and
%$c_1(V\mathbbm{1}/E)=2e_1$ or
$c_1(V\mathbbm{1}/E)=2e_1$ if the dimension of $h$ is even.}
%\textit{-- otherwise -- if} $\text{dim}(h)=2n+1$ \textit{-- then $c_k(V\mathbbm{1}/E)=(-1)^kc_k(E)=2e_k$} 
%\textit{for all $k\in [2,\;2n]$, $c_1(V\mathbbm{1}/E)=-c_1(E)=2e_1+\alpha$ for some} $\alpha\in %\text{CH}^1(X)$\textit{,  $c_{2n+1}(V\mathbbm{1}/E)=2e_{2n+1}$ and 
%$c_{2n+2}(V\mathbbm{1}/E)=0$.}
\end{prop}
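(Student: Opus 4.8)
The plan is to compute the class of the projective bundle $\mathbb{P}(E)$ inside the trivial bundle $\mathbb{P}(V\mathbbm{1})=\mathbb{P}_F(V)\times X$ in two different ways, and then compare the two expressions in the group $\text{CH}^{2n+2}(\mathbb{P}_F(V)\times X)$, which by the projective bundle theorem is a free $\text{CH}(X)$-module on the powers $1,H,\dots,H^{N}$ of the hyperplane class, where $N=\text{dim}_F(V)-1$. Matching coefficients of each $H^j$ then yields identities in $\text{CH}(X)$. For the intrinsic expression, write $\pi$ for the projection $\mathbb{P}(V\mathbbm{1})\rightarrow X$; the tautological inclusion $\mathcal{O}(-1)\hookrightarrow \pi^{\ast}(V\mathbbm{1})$ followed by the projection onto $\pi^{\ast}(V\mathbbm{1}/E)$ is a section of $\mathcal{O}(1)\otimes \pi^{\ast}(V\mathbbm{1}/E)$ whose zero locus is exactly $\mathbb{P}(E)$. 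Since $\mathbb{P}(E)$ has codimension $2n+2$ in $\mathbb{P}_F(V)\times X$, equal to the rank of $V\mathbbm{1}/E$, the section is regular (just as in the quadratic case \cite[\S 86]{EKM}), so $[\mathbb{P}(E)]$ is the top Chern class of $\mathcal{O}(1)\otimes \pi^{\ast}(V\mathbbm{1}/E)$. Expanding $c_{\text{top}}(W\otimes L)=\sum_i c_i(W)\,c_1(L)^{\text{rk}(W)-i}$ gives
\[ [\mathbb{P}(E)]=\sum_{i=0}^{2n+2} H^{2n+2-i}\times c_i(V\mathbbm{1}/E) \quad \text{in } \text{CH}(\mathbb{P}_F(V)\times X). \]

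Second, I would compute the same class as the push-forward $(em\times\text{id})_{\ast}\gamma$ of the correspondence $\gamma=[\mathbb{P}(E)]\in \text{CH}^{2n+1}(Q\times X)$ along $em:Q\hookrightarrow \mathbb{P}_F(V)$, feeding in the decomposition (4.1) or (4.2). The two elementary push-forward rules needed are $(em)_{\ast}(h^{i})=2H^{i+1}$ (from $[Q]=2H$ and the projection formula) and $(em)_{\ast}(l)=H^{2n+2}$ for the top isotropic class $l$ of $Q$ (push-forward of a linear subspace). In the even case, using the normalization $e_0=1$, $e'_0=0$, these turn (4.1) into
\[ (em\times\text{id})_{\ast}\gamma = H^{2n+2}\times 1 + \sum_{k=1}^{2n+1}2\,H^{2n+2-k}\times e_k. \]
Comparing coefficients of $H^{j}$ with the intrinsic expression immediately gives $c_0=1$, $c_k=2e_k$ for all $k\in[1,2n+1]$ (so $\alpha=0$), and $c_{2n+2}=0$, which is the proposition in even dimension.

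The main obstacle is the odd-dimensional case, where $\gamma$ carries the extra summand $\gamma'$ of (4.2). I would write $(em\times\text{id})_{\ast}\gamma'=\sum_j H^{j}\times b_j$ and extract each coefficient by the projection formula (using $\pi\circ(em\times\text{id})=p_X$) as
\[ b_j={p_X}_{\ast}\big((h^{4n+1-j}\times 1)\cdot \gamma'\big). \]
The relation $h^{4n-i}=2\,l_i$, valid for $0\le i\le 2n-1$ in $\text{CH}(Q)$, rewrites $h^{4n+1-j}=2\,l_{j-1}$ for $1\le j\le 2n$, so the defining vanishing ${p_X}_{\ast}\big((l_{m}\times 1)\cdot\gamma'\big)=0$ for $m\in[0,2n-1]$ forces $b_j=0$ there; moreover $h^{4n+1}=0$ on $Q$ gives $b_0=0$, and the condition ${p_X}_{\ast}\big((h^{2n-1}\times 1)\cdot\gamma'\big)=0$ gives $b_{2n+2}=0$. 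The single power of $H$ left uncontrolled is the middle one, $h^{2n}$, which is neither of the form $2\,l_i$ nor covered by the vanishing hypotheses, and it produces $b_{2n+1}=\alpha:={p_X}_{\ast}\big((h^{2n}\times 1)\cdot\gamma'\big)\in \text{CH}^1(X)$. Comparing coefficients, with the substituted $(em\times\text{id})_{\ast}\gamma=(em\times\text{id})_{\ast}\gamma'+H^{2n+2}\times 1+\sum_{k=1}^{2n+1}2H^{2n+2-k}\times e_k$, then yields $c_{2n+2}=b_0=0$, $c_k=2e_k$ for $k\in[2,2n+1]$, and $c_1=2e_1+b_{2n+1}=2e_1+\alpha$.

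I expect the delicate point to be exactly this bookkeeping around the middle class: identifying which single $H$-power escapes both the relation $h^{4n-i}=2\,l_i$ and the normalization conditions imposed on $\gamma'$. This is precisely the mechanism that creates the correction term $\alpha$ in degree one, present only in odd dimension, whereas in the even case $\gamma$ has no $\gamma'$-part and no such escape occurs, giving $\alpha=0$.
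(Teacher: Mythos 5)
Your proposal is correct and follows essentially the same route as the paper: push $\gamma$ forward along $em\times\mathrm{Id}_X$ using decomposition (4.1) or (4.2) together with $em_{\ast}(h^k)=2H^{k+1}$, compare with the expansion $[\mathbb{P}(E)]=\sum_k H^{2n+2-k}\times c_k(V\mathbbm{1}/E)$ (which you re-derive via the regular section of $\mathcal{O}(1)\otimes\pi^{\ast}(V\mathbbm{1}/E)$, i.e.\ the content of the cited \cite[Proposition 58.10]{EKM}), and match coefficients via the Projective Bundle Theorem. Your explicit bookkeeping showing $(em\times\mathrm{Id}_X)_{\ast}(\gamma')=H^{2n+1}\times\alpha$ in the odd case correctly fills in the step the paper leaves to the reader.
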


\begin{proof}
%Let $s:Q\rightarrow \mathbb{P}_F(V)$ be the closed embedding.
%Let $H$ denote the class of a hyperplane in $\mathbb{P}_F(V)$.
We apply $(em \times \text{Id}_X)_{\ast}$ to the cycle $\gamma$.
Assume first that $\text{dim}(h)=2n+2$.
Since ${em}_{\ast}(h^k)=2H^{k+1}$ for all $k\geq 0$,
decomposition (4.1) gives that 
\[[\mathbb{P}(E)]={{em}_{\ast}(l_{2n+1})\times 1} + {\sum_{k=1}^{2n+1} 2H^{2n+2-k}\times e_k}
\,\,\,\,\,\, \text{in}\,\,\,\, \text{CH}^{2n+2}(\mathbb{P}_F(V)\times X).\]

On the other hand, by \cite[Proposition 58.10]{EKM}, one has
\[ [\mathbb{P}(E)]={\sum_{k=0}^{2n+2} H^{2n+2-k}\times c_k(V\mathbbm{1}/E)}.\]
Hence, it follows from the Projective Bundle Theorem (see \cite[Theorem 53.10]{EKM})
that $c_k(V\mathbbm{1}/E)=2e_k$ for all $k\in [1,\;2n+1]$
and $c_{2n+2}(V\mathbbm{1}/E)=0$ in $\text{CH}(X)$.

Assume that $\text{dim}(h)=2n+1$. Decomposition (4.2) gives the identity
\[[\mathbb{P}(E)]=(em \times \text{Id}_X)_{\ast}(\gamma')+{{em}_{\ast}(l_{2n-1})\times 1} + {\sum_{k=1}^{2n+1} 2H^{2n+2-k}\times e_k}\]
in $\text{CH}^{2n+2}(\mathbb{P}_F(V)\times X)$.
Moreover, it follows from the conditions defining the cycle $\gamma'$ in (4.2) 
%(together with the ring structure of $\text{CH}(\mathbb{P}_F(V))$), 
that $(em \times \text{Id}_X)_{\ast}(\gamma')$ 
has the form $H^{2n+1}\times a$ for some $a\in \text{CH}^{1}(X)$.
Consequently, by using \cite[Proposition 58.10]{EKM}
and the Projective Bundle Theorem as it has been done in the even case, one get
that $c_k(V\mathbbm{1}/E)=2e_k$ for all $k\in [2,\;2n+1]$
and $c_{2n+2}(V\mathbbm{1}/E)=0$ in $\text{CH}(X)$.
\end{proof}

\begin{rem}
The above proposition shows in particular that for \emph{any} non-degenerate $K/F$- hermitian form $h$, the algebraic cycles $2e_k$, for $k\in [2,\;2n+1]$ and also for $k=1$ if $\text{dim}(h)$ is even, are defined over $F$, i.e., these are always rational cycles (the cycles $e_k$ are defined over any splitting field of $h$).
\end{rem}

\begin{rem}
Suppose $\text{dim}(h)=2n+2$. 
Let $Y$ be the maximal orthogonal grassmannian of $q$.
Since a totally $h$-isotropic $K$-subspace is also a totally $q$-isotropic $F$-subspace, 
we have a natural closed embedding $\textit{im}: X\hookrightarrow Y$.
Let us denote by $z_k\in\text{CH}^k(Y)$, $1\leq k \leq 2n+1$, the generators of $\text{CH}(Y)$ 
introduced by A.\,Vishik in \cite{Grass}, 
quadratic analogues of the elements $e_k$ (recall that $q$ is split). 
We write $\mathcal{E}$ for the subbundle of the trivial bundle $V\times Y$ over $Y$ 
given by pairs $(u,U)$ such that $u\in U$.
Note that one has $E=im^{\ast}(\mathcal{E})$.
Hence, since
$z_k$ is half the $k$-th Chern class of the vector bundle $(V\times Y)/\mathcal{E}$
(see \cite[Proposition 86.13]{EKM} or \cite[Proposition 2.1]{uINV}) and $\text{CH}(X)$ is torsion-free 
(since the $K/F$- hermitian form $h$ is split, see \cite{koc} for example),
it follows from Proposition 4.10 that 
\[e_k=im^{\ast}(z_k)\]
in $\text{CH}(X)$ for all $1\leq k \leq 2n+1$
(this can also be obtained by comparing the decompositions (4.1) and \cite[(86.4)]{EKM}).
In the odd-dimensional case, the variety $X$ is naturally a closed subvariety of the second to last grassmannian of $q$.
\end{rem}

We set $e_k=0$ for $k>2n+1$ (which makes sense since by Proposition 4.10 the vector bundle
$V\mathbbm{1}/E$ is in both cases of rank $2n+2$ with trivial top Chern class).
By duality, one has $V\mathbbm{1}/E^{\perp}\simeq E^{\vee}$.
Hence, in the Grothendieck ring $K(X)$, one has 
\[ [E^{\perp}/E]=[E^{\perp}]-[E]=-[E^{\vee}]-[E]=[\left( V\mathbbm{1}/E\right)^{\vee}]+[V\mathbbm{1}/E] \]
modulo the trivial bundles.
Therefore, by the Whitney Sum Formula (\cite[Proposition 54.7]{EKM}), one has 
$c(E^{\perp}/E)=c \left( \left(V\mathbbm{1}/E\right)^{\vee} \right)\circ c(V\mathbbm{1}/E)$.
Thus,
since $\text{CH}(X)$ is torsion-free, it follows from
Proposition 4.10 that, for even-dimensional $h$, one has
\[e_k^2-2e_{k-1} e_{k+1}+2e_{k-2}e_{k+2}-\cdots+(-1)^{k-1}2e_1e_{2k-1}+(-1)^ke_{2k}=0\]
in $\text{CH}(X)$, for all $k\geq 1$, and for odd-dimensional $h$, one has
\[e_k^2-2e_{k-1}e_{k+1}+2e_{k-2}e_{k+2}-\cdots+(-1)^{k-1}c_1(V\mathbbm{1}/E)\cdot e_{2k-1}+(-1)^ke_{2k}=0\]
in $\text{CH}(X)$, for all $k\geq 2$
(recall that $E^{\bot}=E$ if $\text{dim}(h)$ is even and $E^{\bot}/E$ has rank $2$ otherwise).
% and also for $k=1$ for $h$ of even dimension.
Moreover, by using decomposition (3.3) and an induction on $n$, one obtains that,
for odd-dimensional $h$, the group $\text{Ch}_K^1(X)$ is trivial.
Consequently, in both even and odd situations, one has 
\begin{equation}e_k^2=e_{2k}\end{equation}
in $\text{Ch}_K(X)$, for all $k\geq 2$, and also for $k=1$ if $\text{dim}(h)$ is even.
% and also for $k=1$ for $h$ of even dimension.
The relations (4.13) are also a consequence of the respective result for the $z_k$'s, c.f., Remark 4.12 and \cite[(86.15)]{EKM}.

\begin{cor}
\textit{One has $e^2_{k}=0$ in} $\text{Ch}_K(X)$ \textit{for all $k\geq 2$, and also for $k=1$ if $\text{dim}(h)$ is even.}
% and also for $k=1$ for $h$ of even dimension.}
\end{cor}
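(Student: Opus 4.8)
Here is how I would approach proving Corollary 4.14.

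The plan is to argue by induction on $\dim(h)$, passing from $\tilde{X}$ to $X$ through the decomposition (3.5); the two base cases $\dim(h)=1$ and $\dim(h)=2$ are immediate, since for $\dim(h)=1$ one has $X=\text{Spec}(F)$ with nothing to prove, while for $\dim(h)=2$ one has $\dim X=1$, so $\text{Ch}_K^2(X)=0$ and hence $e_1^2=0$. Writing $\dim(h)=2n+2$ or $2n+1$, I would first dispose of the indices $k\geq n+1$: for these $2k>2n+1$, so by (4.13) and the convention $e_j=0$ for $j>2n+1$ one gets $e_k^2=e_{2k}=0$ at once. It then remains to treat $2\leq k\leq n$, together with $k=1$ in the even case. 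The point to keep in mind is that the target $\text{Ch}_K^{2k}(X)$ is in general \emph{nonzero}, so the vanishing of $e_k^2$ cannot follow from a mere degree count; instead $e_k^2$ must be shown to lie in the summand of (3.5) that is supported in codimensions too large to accommodate it.

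Assume the statement known for $\tilde{X}$, the maximal unitary grassmannian of $\tilde{h}$. Fix $k$ in the range above; since $n\leq 2n-1$, Lemma 4.8 applies to the singleton $\{k\}$ and yields $(j^{\ast})_K(e_k)=(f^{\ast})_K(\tilde{e}_k)$. As $(j^{\ast})_K$ is a ring homomorphism and $(f^{\ast})_K$ preserves products (see the proof of Lemma 4.8), I would compute
\[
(j^{\ast})_K(e_k^2)=\bigl((j^{\ast})_K(e_k)\bigr)^2=\bigl((f^{\ast})_K(\tilde{e}_k)\bigr)^2=(f^{\ast})_K(\tilde{e}_k^2)=0,
\]
the last equality being the induction hypothesis. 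Hence $e_k^2\in \text{Ker}\bigl((j^{\ast})_K\bigr)=\text{Im}\bigl((i_{\ast})_K\bigr)$, the equality of kernel and image being exactly the exact sequence used in the proof of Theorem 4.9. By Lemma 4.4 the class $e_{2n+1}=[\tilde{X}]$ sits in codimension $2n+1$, so $\tilde{X}$ has codimension $d=2n+1$ in $X$; consequently $(i_{\ast})_K$ raises codimension by $d$ and $\text{Im}\bigl((i_{\ast})_K\bigr)$ is concentrated in codimensions $\geq 2n+1$. Since $e_k^2\in \text{Ch}_K^{2k}(X)$ with $2k\leq 2n<2n+1$, it lies in $\text{Ch}_K^{2k}(X)\cap \text{Im}\bigl((i_{\ast})_K\bigr)=0$, so $e_k^2=0$, completing the induction.

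The main obstacle is precisely the identification of $e_k^2$ as a class coming from $(i_{\ast})_K$: one must know that $e_k^2$ falls into the $\text{Ker}(j^{\ast})$-part, rather than the $\text{Im}(\beta_{\ast})$-part, of the splitting (3.5), and this is what the multiplicativity of $(f^{\ast})_K$ combined with the inductive vanishing $\tilde{e}_k^2=0$ delivers; the codimension estimate $2k<d$ then forces the class to vanish. The only remaining bookkeeping is harmless: in the odd case the index $k=1$ is excluded from Lemma 4.8, but there $\text{Ch}_K^1(X)=0$ already gives $e_1=0$; and the indices $k\geq n+1$, which include those falling outside the range of Lemma 4.8, are covered by the elementary observation of the first paragraph.
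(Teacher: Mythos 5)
Your proof is correct and follows essentially the same route as the paper's: induction on $n$, disposing of $k>n$ via (4.13), and for $2\leq k\leq n$ (plus $k=1$ in the even case) showing $(j^{\ast})_K(e_k^2)=(f^{\ast})_K(\tilde{e}_k^2)=0$ by multiplicativity of $(f^{\ast})_K$ and the induction hypothesis, then concluding by the codimension bound on $\mathrm{Im}\bigl((i_{\ast})_K\bigr)$. You merely spell out the "codimensional reasons" and the base cases in more detail than the paper does.
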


\begin{proof}
We proceed by induction on $n$. For $n=0$, the conclusion is true by dimensional reasons.
By (4.13), the conclusion is true for $k>n$. Let $1\leq k \leq n$ (or $2\leq k \leq n$ if $\text{dim}(h)$ is odd).
By codimensional reasons, it suffices to prove that $(j^{\ast})_K(e^2_{k})=0$ in 
$\text{Ch}^{2k}_K(X\backslash \tilde{X})$  to get that
$e^2_{k}=0$ in $\text{Ch}^{2k}_K(X)$ (c.f., the exact sequence in the proof of Theorem 4.9).
Moreover, one has $(j^{\ast})_K(e^2_{k})=(f^{\ast})_K({\tilde{e}_{k}}^2)$ (see Lemma 4.8 and its proof).
Since ${\tilde{e}_{k}}^2=0$ in $\text{Ch}_K(\tilde{X})$ by the induction hypothesis, the proof is complete. 
\end{proof}

We have obtained that $e_{2k}=0$ in $\text{Ch}_K(X)$ for all $k\geq 2$ and also for $k=1$ if $\text{dim}(h)$ is even.
We are now able to determine the
 multiplicative structure for the ring $\text{Ch}_K(X)$ the same way it has been done for 
the case of quadratic forms.

\begin{prop}
%\textit{For all odd $2<k\leq 2n+1$ and also for $k=1$ for $h$ of even dimension, the equalities $e_k^2=0$ %in} $\text{Ch}_K(X)$\textit{ 
%form a set of defining relations for the generators $e_k$
%of the ring} $\text{Ch}_K(X)$.
\textit{There is a ring isomorphism}
\[\text{Ch}_K(X) \simeq \bigotimes_{m\leq k\leq n} \left(\bigslant{\mathbb{Z}/2\mathbb{Z}[e_{2k+1}]}{(e_{2k+1}^2)} \right),\]
\textit{with $m=0$ if the dimension of $h$ is even and $m=1$ otherwise.}
\end{prop}

\begin{proof}
Assume first that $h$ is of even dimension. Let us denote by $\mathcal{R}$ the factor ring of the polynomial ring
$\mathbb{Z}/2\mathbb{Z}[t_0,t_1,\dots, t_n]$ by the ideal generated by the monomials
$t_k^2$, for $0\leq k\leq n$.
Then, by Corollary 4.14, the ring homomorphism 
\[\begin{array}{rrcl}
\varphi : & \mathcal{R} & \rightarrow & \text{Ch}_K(X) \\
 & t_k & \mapsto & e_{2k+1}
\end{array}\]
%taking $t_i$ to $e_{2i+1}$ 
is well defined. Furthermore, it follows from Theorem 4.9 that $\varphi$ is
surjective. Since the classes of the monomials $t_0^{r_0}t_1^{r_1}\cdots t_n^{r_n}$, with $r_k=0$ or $1$ for every $k$, generate $\mathcal{R}$, the ring homomorphism $\varphi$ is also injective by Theorem 4.9.
One proceeds the same way for $h$ of odd dimension, except there is no variable $t_0$.
\end{proof}

\section{General maximal unitary grassmannian}

In this section, we use notation introduced in the previous sections.
We do not make any assumption on the isotropy of the 
non-degenerate $K/F$-hermitian form $h$.

%Let $L/F$ be a splitting field of $h$ (e.g. the function field of the maximal unitary 
%grassmannian $X$ or $K$). 
For any scheme $Y$ over $F$, we write $\bar{Y}$ for $Y\times \text{Spec}\,F(X)$,
where $F(X)$ is the function field of the maximal unitary grassmannian $X$ (note that $K\otimes_F F(X)$ is still a field).
Following the path set by A.\,Vishik in \cite{Grass}, we describe the subring, 
\[\overline{\text{Ch}}_K(X):=
\text{Im}\left(\text{Ch}_K(X)\rightarrow \text{Ch}_K(\bar{X})    \right).\]
of \textit{rational} elements, c.f. Theorem 5.7 
(actually, this description does not depend on the choice of a splitting field of $h$ which does not split $K$).
We use similar notation and vocabulary for classical Chow rings and certain products
of $F$-varieties.

The exposition in this section follows the thread of \cite[\S 87]{EKM}. The proofs
are sometimes very similar (the proof of Theorem 5.7 compared to the original
\cite[Theorem 87.7]{EKM} for example).

\medskip

We consider the elements $e_k\in \text{CH}^k(\bar{X})$ introduced in the previous section.

\begin{prop}
\textit{-- If} $\text{dim}(h)=2n+2$
%\textit{For all $k\in [1,\;2n+1]$}
% and also for $k=1$ if $\text{dim}(h)$ is even,}
%\textit{-- if $h$ is even-dimensional then
\textit{then the elements $(e_k\times 1)+(1\times e_k)$ in} 
$\text{CH}(\bar{X}^2)$ \textit{are rational for all $k\in [1,\;2n+1]$;} 

\textit{-- If} $\text{dim}(h)=2n+1$
%\textit{For all $k\in [1,\;2n+1]$}
% and also for $k=1$ if $\text{dim}(h)$ is even,}
%\textit{-- if $h$ is even-dimensional then
\textit{then the elements $(e_k\times 1)+(1\times e_k)$ in} 
$\text{Ch}_K(\bar{X}^2)$ \textit{are rational for all $k\in [2,\;2n+1]$.} 

%\textit{-- if $h$ is odd-dimensional then the elements $(e_k\times 1)+(1\times e_k)$ in} 
%$\text{Ch}_K(\bar{X}^2)$ \textit{are rational.} 
% and also for $k=1$ if} $\text{dim}(h)=2n+2$.
\end{prop}

\begin{proof}
Suppose first that $\text{dim}(h)=2n+2$.
Here we use notation and material introduced in Remark 4.12. 
Let $k\in [1,\;2n+1]$. Since the element
$(z_k\times 1)+(1\times z_k) \in
\text{CH}(\bar{Y}^2)$ is rational (see \cite[Proposition 86.17 and Corollary 87.3]{EKM}) and one has $im^{\ast}(z_k)=e_k$, the element
$(e_k\times 1)+(1\times e_k) \in
\text{CH}(\bar{X}^2)$ is also rational.

Suppose now that $\text{dim}(h)=2n+1$.
In that case, let us denote by $im$ the natural closed embedding $X\hookrightarrow Y_{2n}$ of the variety $X$ into the second to last grassmannian of $q$.
%which we shall denote by $Y_{2n}$.
Let us denote by $z^{-1}_k\in \text{CH}^k\left(\bar{Y}_{2n}\right)$, $1\leq k \leq 2n+1$, the $Z$-type generators of $\text{CH}^k\left(\bar{Y}_{2n}\right)$ (see \cite[\S 2]{uINV}).
Note that it follows from \cite[Proposition 2.1]{uINV} and Proposition 4.10
that, for any $2\leq k \leq 2n+1$, one has 
\[im^{\ast}(z^{-1}_k)=e_k\]
in $\text{CH}(\bar{X})$. 
Moreover, we write $Y$ for the maximal grassmannian of $q$ and
$z_k\in\text{CH}^k(Y)$, $1\leq k \leq 2n$, for the generators of $\text{CH}(\bar{Y})$ 
introduced by A.\,Vishik in \cite{Grass}.
We consider the diagram
\[\xymatrix{
Y_{2n} & Y_{2n,2n+1} \ar[l]_{f} \ar[r]^{g} & Y,
}\]
where $Y_{2n,2n+1}$ is the partial orthogonal flag variety of totally $q$-isotropic $F$-vector subspaces of dimension $2n$ and $2n+1$, and the morphisms $f$ and $g$ are the natural projections.
By \cite[Lemma 2.1]{uINV}, there is an element $c_1\in \text{CH}^1(Y_{2n,2n+1})$ such that, for any $2\leq k \leq 2n+1$, one has
\[f^{\ast}(z^{-1}_k)= c_1\cdot g^{\ast}(z_{k-1})+g^{\ast}(z_{k})\]
(with $z_{2n+1}=0$).
Let $e\in \text{CH}(Y_{2n,2n+1})$ be the class of a generic point. Using the previous identity and the Projection Formula, one get that
%Hence, it follows from \cite[Lemma 2.6]{sym} and the Projection Formula that
\begin{equation} z^{-1}_k=f_{\ast}\left(e\cdot c_1\cdot g^{\ast}(z_{k-1}) \right)+f_{\ast}\left( e\cdot g^{\ast}(z_k) \right) \end{equation}
for any $2\leq k \leq 2n+1$.
Since the element $(z_k\times 1)+(1\times z_k) \in
\text{CH}(\bar{Y}^2)$ is rational for any $1\leq k \leq 2n+1$, the element
\begin{multline*} (f_{\ast})^{\times 2}\left((e\times e)\cdot (c_1\times 1+1\times c_1)\cdot (g^{\ast})^{\times 2}\left((z_{k-1}\times 1)+(1\times z_{k-1})\right)\right) \\ +
\left(f_{\ast}\right)^{\times 2}\left((e\times e)\cdot (g^{\ast})^{\times 2}\left( (z_k\times 1)+(1\times z_k)\right) \right)\end{multline*}
is rational for any $2\leq k \leq 2n+1$.
Furthermore, by (5.2), the latter cycle can be rewritten as
\begin{equation}(z^{-1}_k\times 1)+(1\times z^{-1}_k)+f_{\ast}\left(e\cdot  g^{\ast}(z_{k-1}) \right) \times  f_{\ast} (e\cdot c_1) +
f_{\ast}(e\cdot c_1)\times f_{\ast}\left( e\cdot g^{\ast}(z_{k-1}) \right).\end{equation}
Since $f_{\ast} (e\cdot c_1)\in \text{CH}^1(Y_{2n})$ and $\text{Ch}_K^1(\bar{X})$ is trivial (see paragraph right before
Corollary 4.14), the conclusion is obtained by taking the image of the rational cycle (5.3) under $im^{\ast}$ and then projecting to $\text{Ch}_K(\bar{X})$.
\end{proof}

For every subset $I\subset [1,\; 2n+1]_{\text{od}}$, 
with $1\notin I$ if $\text{dim}(h)=2n+1$, 
we set
\begin{equation}
x_I:=\prod_{k\in I}\left((e_k\times 1)+(1\times e_k) \right)\in\overline{\text{Ch}}_K(X^2).
\end{equation}
%defines an endomorphism $(x_I)_{\ast}$ of $\text{Ch}(\bar{X})$ taking 
%$\overline{\text{Ch}}(X)$ onto $\overline{\text{Ch}}(X)$.

\begin{lemme}
\textit{For any subsets} $I,J\subset [1,\; 2n+1]_{\text{od}},$
\textit{with $1\notin I\cup J$ if} $\text{dim}(h)=2n+1,$
\textit{one has in} $\text{Ch}_K(\bar{X})$
\[(x_J)_{\ast}(e_I)=\left \{\begin{array}{ll} e_{I\cap J} & \;\;\textit{if}\;\; I\cup J=[1,\; 2n+1]_{\text{od}}\;,\;\; \textit{or} \;\; I\cup J=[3,\; 2n+1]_{\text{od}} \\
&\;\;\textit{for odd-dimensional}\;\; h ; \\
 0 &\;\; \textit{otherwise} .\end{array}\right.\]
\end{lemme}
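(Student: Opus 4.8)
We need to compute the action of the correspondence $x_J$ on the element $e_I$, where both $x_J$ and $e_I$ are built from the distinguished generators $e_k$ (odd indices) of the split ring $\text{Ch}_K(\bar X)$. The correspondence $x_J = \prod_{k\in J}\bigl((e_k\times 1)+(1\times e_k)\bigr)$ lives in $\overline{\text{Ch}}_K(X^2)$, and its push-forward $(x_J)_\ast$ is a map $\text{Ch}_K(\bar X)\to\text{Ch}_K(\bar X)$. I want to show it sends $e_I$ to $e_{I\cap J}$ exactly when $I\cup J$ is the full odd index set, and to zero otherwise.

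**The computational plan.** The key is that by Theorem 4.9 the monomials $e_{I'_{\text{od}}}$ form a $\mathbb{Z}/2\mathbb{Z}$-basis of $\text{Ch}_K(\bar X)$, and by Corollary 4.14 the generators square to zero: $e_k^2=0$. The action of a correspondence given by a class $\alpha\times\beta$ on a cycle $z$ is computed via $p_{2\ast}\bigl(p_1^\ast(z)\cdot(\alpha\times\beta)\bigr)=\deg\text{-type pairing}$; concretely, for a product correspondence, $((e_k\times 1)+(1\times e_k))_\ast(z)=\deg_?(\cdots)$, and the relevant fact is that the push-forward along the second projection of $(z\cdot(\text{first factor}))\times(\text{second factor})$ picks out the top-degree part. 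First I would expand the product defining $x_J$ using $e_k^2=0$: each factor contributes either $e_k\times 1$ or $1\times e_k$, so $x_J=\sum_{J=J_1\sqcup J_2}e_{J_1}\times e_{J_2}$. Then I would compute $(x_J)_\ast(e_I)=\sum_{J_1\sqcup J_2=J}\deg(e_I\cdot e_{J_1})\,e_{J_2}$, where the pairing $\deg(e_I\cdot e_{J_1})$ is the degree-zero component of the product in $\text{Ch}_K(\bar X)$, i.e.\ the coefficient of the point class.

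**The main pairing computation.** The crux is evaluating $e_I\cdot e_{J_1}$ and extracting its degree-zero part. Since $e_k^2=0$, the product $e_I\cdot e_{J_1}$ vanishes unless $I\cap J_1=\emptyset$, in which case it equals $e_{I\cup J_1}$. By Corollary 4.7, the full product $e_{[1,\,2n+1]_{\text{od}}}$ (resp.\ $e_{[3,\,2n+1]_{\text{od}}}$ in the odd case) is the class of a rational point, hence has degree $1$; any proper monomial has strictly positive dimension and contributes $0$ to the degree-zero pairing. Therefore $\deg(e_I\cdot e_{J_1})=1$ precisely when $I\cap J_1=\emptyset$ and $I\cup J_1$ is the full odd index set, and $0$ otherwise. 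This forces $J_1=[1,\,2n+1]_{\text{od}}\setminus I$ (the complement), which lies inside $J$ exactly when $I\cup J$ is everything; and then the surviving $J_2=J\setminus J_1=I\cap J$, giving $(x_J)_\ast(e_I)=e_{I\cap J}$. When $I\cup J$ is not the full set no admissible $J_1$ exists and the push-forward is zero.

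**Expected obstacle.** The routine part is the bookkeeping with $e_k^2=0$ and the disjoint-union expansion. The step requiring genuine care is the identification of the correspondence action $(\alpha\times\beta)_\ast$ with the degree pairing $\deg(e_I\cdot\alpha)\cdot\beta$: I would invoke the standard formula for push-forward of a product correspondence (as in \cite[\S 62]{EKM}) together with the Projection Formula, and verify that the only surviving term of the second-projection push-forward is the top-codimensional (point-class) component. I also need to confirm everything is legitimate over $F(X)$ in $\text{Ch}_K(\bar X)$—that the point class of Corollary 4.7 remains a degree-one generator after base change and modding out the norm ideal—which follows since the monomial basis of Theorem 4.9 survives to $\bar X$ and the top monomial pairs to $1$.
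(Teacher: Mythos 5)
Your proposal is correct and follows essentially the same route as the paper: expand $x_J=\sum_{J_1\subset J}e_{J_1}\times e_{J\backslash J_1}$, reduce $(x_J)_{\ast}(e_I)$ to the degree pairing $\deg(e_I\cdot e_{J_1})$, and evaluate that pairing via Corollary 4.7 (the full odd monomial is the class of a rational point) together with Corollary 4.14/Theorem 4.9 (so any other surviving monomial is a basis element of positive dimension, hence of degree $0$). The only cosmetic difference is that you phrase the expansion over disjoint decompositions $J=J_1\sqcup J_2$ rather than over subsets $J_1\subset J$, which is the same thing.
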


\begin{proof}
Assume that $\text{dim}(h)=2n+2$.
Since
\[x_J=\sum_{J_1\subset J} e_{J_1}\times e_{J \backslash J_1} ,\]
one has 
\[(x_J)_{\ast}(e_I)=\sum_{J_1\subset J}  \text{deg}(e_I \cdot e_{J_1})e_{J \backslash J_1}\,\,,\]
in $\text{Ch}_K(\bar{X})$, where $\text{deg}:\text{Ch}_K(\bar{X})\rightarrow 
\text{Ch}_K(\text{Spec}\,F(X))=\mathbb{Z}/2\mathbb{Z}$ is 
the degree homomorphism associated with the push-forward of the structure morphism.
Therefore, it suffices to show that for any subsets $I,J_1\subset [1,\; 2n+1]_{\text{od}}$, one has
\begin{equation} \text{deg}(e_I \cdot e_{J_1})=\left \{\begin{array}{ll} 1\;\;\text{mod}\;2 & \;\;\text{if}\;\; J_1=[1,\; 2n+1]_{\text{od}} \backslash I  ; \\
 0\;\;\text{mod}\;2 &\;\; \text{otherwise} ,\end{array}\right.\end{equation}
to get the conclusion. For $J_1=[1,\; 2n+1]_{\text{od}} \backslash I$, 
this follows from Corollary 4.7.
%the product $e_I\cdot e_{J_1}=e_{[1,\; 2n+1]_{\text{od}}}$ in $\text{CH}_0(\bar{X})$ is the class of a %rational point by Corollary 4.7,
%hence $\text{deg}(e_I \cdot e_{J_1})=1\;\;\text{mod}\;2$. 
For $J_1\neq [1,\; 2n+1]_{\text{od}} \backslash I$,
%Otherwise, in $\text{Ch}(\bar{X})$,
using Corollary 4.14 (or Theorem 4.9), one get that $e_I\cdot e_{J_1}$
is either zero or the monomial $e_L$ for some $L$ different from $[1,\; 2n+1]_{\text{od}}$,
thus $\text{deg}(e_I \cdot e_{J_1})=0\;\;\text{mod}\;2$.
The proof in the odd case is similar. 
\end{proof}

We are now able to prove the $K/F$-hermitian analogue of \cite[Main Theorem 5.8]{Grass}. 

\begin{thm}
\textit{Let $K/F$ be a quadratic separable field extension and $h$ a non-degenerate $K/F$-hermitian
form of dimension $2n+2$ or $2n+1$. Let $X$ be the variety of maximal totally isotropic subspaces in $h$.
Then the ring} $\overline{\text{Ch}}_K(X)$ \textit{is generated by all} $e_k$, $k\in [3,\; 2n+1]_{\text{od}}$
\textit{and also $k=1$ for $h$ of even dimension, such that} $e_k\in \overline{\text{Ch}}_K(X)$.
\end{thm}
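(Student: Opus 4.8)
The plan is to exploit the exterior-algebra structure of $\text{Ch}_K(\bar{X})$ together with the rational correspondences $x_J$, extracting individual generators by contraction. Write $\Omega=[1,\;2n+1]_{\text{od}}$ if $\dim(h)$ is even and $\Omega=[3,\;2n+1]_{\text{od}}$ if $\dim(h)$ is odd. By Theorem 4.9, Corollary 4.14 and Proposition 4.15, the ring $A:=\text{Ch}_K(\bar{X})$ is the $\mathbb{Z}/2\mathbb{Z}$-algebra generated by the classes $e_k$, $k\in\Omega$, subject to $e_k^2=0$; a $\mathbb{Z}/2\mathbb{Z}$-basis is given by the monomials $e_I=\prod_{k\in I}e_k$ for $I\subseteq\Omega$, and $A$ is graded by codimension via $\sigma(I):=\sum_{k\in I}k$ (so $e_I\in\text{Ch}^{\sigma(I)}_K(\bar{X})$). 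Set $R:=\overline{\text{Ch}}_K(X)$, a graded subring of $A$, and $S:=\{k\in\Omega\mid e_k\in R\}$. The subring $B$ generated by the rational generators is exactly the span of the $e_I$ with $I\subseteq S$; since $R$ is a ring containing each such $e_k$, one has $B\subseteq R$, and the content of the theorem is the reverse inclusion $R\subseteq B$. As $R$ and $B$ are graded, it suffices to show that every homogeneous $\alpha\in R$ lies in $B$.

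For $T\subseteq\Omega$, set $\partial_T:=(x_{\Omega\setminus T})_{\ast}$. By Corollary 5.4 and the definition (5.5), each correspondence $x_J$ is rational, so $\partial_T$ maps $R$ into $R$ (a rational correspondence carries rational cycles to rational cycles, since the correspondence action commutes with the base change to $F(X)$). Taking $J=\Omega\setminus T$ in Lemma 5.6 and noting that $I\cup(\Omega\setminus T)=\Omega$ if and only if $T\subseteq I$, in which case $I\cap(\Omega\setminus T)=I\setminus T$, we obtain in $\text{Ch}_K(\bar{X})$ the contraction formula $\partial_T(e_I)=e_{I\setminus T}$ if $T\subseteq I$ and $\partial_T(e_I)=0$ otherwise.

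Now let $\alpha=\sum_I a_I e_I\in R$ be homogeneous and nonzero, and choose a monomial $e_{I_0}$ occurring in $\alpha$ (that is, $a_{I_0}=1$) with $|I_0|$ maximal. The key point is that for every $k\in I_0$ one has $\partial_{I_0\setminus\{k\}}(\alpha)=e_k$. Indeed, a monomial $e_I$ of $\alpha$ contributes to $\partial_{I_0\setminus\{k\}}(\alpha)$ only when $I_0\setminus\{k\}\subseteq I$; writing $M:=I\setminus(I_0\setminus\{k\})$, homogeneity forces $\sigma(M)=\sigma(I_0)-\sigma(I_0\setminus\{k\})=k$, while the maximality of $|I_0|$ forces $|M|=|I|-|I_0|+1\leq 1$. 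Since $\sigma(M)=k>0$, this leaves only $M=\{k\}$, i.e. $I=I_0$, and the contribution is $e_k$. Hence $e_k=\partial_{I_0\setminus\{k\}}(\alpha)\in R$, so $k\in S$; as $k\in I_0$ was arbitrary, $I_0\subseteq S$ and $e_{I_0}\in B$. Therefore $\alpha+e_{I_0}\in R$ is homogeneous with strictly fewer monomials than $\alpha$, and an induction on the number of monomials yields $\alpha\in B$, giving the desired inclusion.

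The combinatorial extraction is forced once the maximal-cardinality monomial is singled out, so the main point requiring care is the input from Section 5: that the $x_J$ are rational and that the induced maps $(x_J)_{\ast}$ preserve the rational subring, uniformly in the even and odd cases. This is exactly what Corollary 5.4 provides at the level of $\text{Ch}_K(\bar{X}^2)$ (in the even case it even holds in $\text{CH}$), together with Lemma 5.6 for the action; working throughout with $\text{Ch}_K$ lets one treat both parities simultaneously, the sole difference being the index set $\Omega$.
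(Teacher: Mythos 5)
Your proof is correct and follows essentially the same route as the paper's: reduce to a homogeneous $\alpha$, pick a monomial $e_{I_0}$ of maximal cardinality, apply the rational correspondence $x_{(\Omega\setminus I_0)\cup\{k\}}$ (your $\partial_{I_0\setminus\{k\}}$) to extract each $e_k$ with $k\in I_0$, and induct on the number of monomials. The only discrepancy is in the citations: the results you call Corollary 5.4, (5.5) and Lemma 5.6 are numbered Corollary 5.3, (5.4) and Lemma 5.5 in the paper.
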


\begin{proof}
Assume that $\text{dim}(h)=2n+2$. By Theorem 4.9, one has to show that if an element
 $\alpha=\sum a_Ie_I \in  \text{Ch}_K(\bar{X})$ 
(with $I\subset [1,\; 2n+1]_{\text{od}}$ and $a_I\in \mathbb{Z}/2\mathbb{Z}$) 
is rational then
for every $I$ satisfying $a_I=1$ and any $k\in I$, the element
$e_k\in \text{Ch}_K(\bar{X})$ is rational. 

One may assume that $\alpha$ is homogeneous. We induct on the number of
nonzero coefficients of $\alpha$.

Let $I$ be a subset with largest $|I|$ such that $a_I=1$. Let $k\in I$ and set 
\[J=\left([1,\; 2n+1]_{\text{od}}\backslash I\right) \cup \{k\}.\] 
We claim that $(x_J)_{\ast}(\alpha)=e_k$.
By Lemma 5.5, it suffices to prove that if $I'$ is a subset such that $a_{I'}=1$
and $I'\cup J=[1,\; 2n+1]_{\text{od}}$ then $I'=I$.
Since $I'\cup J=[1,\; 2n+1]_{\text{od}}$, one has 
\[I'=\left([1,\; 2n+1]_{\text{od}}\backslash J\right) \cup (J\cap I').\]
By maximality of $|I|$, the subset $J\cap I'$ is either empty or consists of a single element.
Hence, it follows from the homogeneity of $\alpha$ that $J\cap I'=\{k\}$, that is $I'=I$.
The claim is proven.

%It follows from Lemma 5.5 that $(x_J)_{\ast}(\alpha)=e_k$ or $1+e_k$.
%Indeed, if $a_{I'}=1$ for some $I'\subset [1,\; 2n+1]_{\text{od}}$ with $I'\cup J=[1,\; 2n+1]_{\text{od}}$,
%then either $I'=[1,\; 2n+1]_{\text{od}}\backslash J$ and thus $(x_J)_{\ast}(e_{I'})=e_{\emptyset}=1$ or
%$I'=\left([1,\; 2n+1]_{\text{od}}\backslash J\right) \cup \{l\}$ for some odd $l$.
%But since $\alpha$ is homogeneous, one has $l=k$.
%Hence, by maximality of $|I|$, one has $I'=I$ and thus $(x_J)_{\ast}(e_{I'})=e_k$.
Thus, since $x_J$ is rational, the cycle $e_k$ is also rational, for all $k\in I$. Consequently, the elements
$e_I$ and $\alpha - e_I$ are rational.
By the induction hypothesis, every element $e_k$ appearing in the decomposition
of $\alpha-e_I$ is rational and it is therefore so for $\alpha$.
This concludes the even case and the odd case can be treated similarly. 
\end{proof}

\section{The invariant $J(h)$}

In this section, we define a new invariant of non-degenerate $K/F$-hermitian forms on the model of the
$J$-invariant for non-degenerate quadratic forms defined by A.\,Vishik in \cite{Grass}
(althoug this section follows the thread of \cite[\S 88]{EKM}, where the latter is defined in the "opposite way").

\medskip

Let $h$ be a non-degenerate $K/F$-hermitian form of dimension $2n+2$ or $2n+1$ and 
$X$ the variety of maximal totally isotropic subspaces in $h$.
We use notation introduced in the previous sections.
We still denote by $e_k$ the generators of $\text{Ch}_K(\bar{X})$.
The discrete \textit{$J$-invariant} $J(h)$ is defined as follows:

\[J(h)=\left \{\begin{array}{ll}\left\{k\in [1,\; 2n+1]_{\text{od}}\;\;\; \text{with}\;\;\; e_k\in \overline{\text{Ch}}_K(X)\right\} & \;\;\text{if}\;\;\text{dim}(h)=2n+2 ; \\
\left\{k\in [3,\; 2n+1]_{\text{od}}\;\;\; \text{with}\;\;\; e_k\in \overline{\text{Ch}}_K(X)\right\} & 
\;\;\text{if}\;\;\text{dim}(h)=2n+1 .\end{array}\right.\]

\medskip

%\noindent Recall that $e_1$ is not defined for $h$ of dimension $2n+1$.

For a subset $I\subset [1,\; 2n+1]$ let us denote by $||I||$ the sum of all $k\in I$.

\begin{prop}
\textit{The biggest codimension $i$ such that} 
$\overline{\text{Ch}}^i_K(X)\neq 0$ \textit{is equal to} ${||J(h)||}$.
\end{prop}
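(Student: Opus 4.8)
The plan is to read off the answer directly from the explicit basis of $\overline{\text{Ch}}_K(X)$ furnished by Theorem 5.7 together with the quadratic relations $e_k^2=0$. First I would observe that by Theorem 5.7 the ring $\overline{\text{Ch}}_K(X)$ is generated by those $e_k$ with $k\in J(h)$, and that these generators satisfy $e_k^2=0$ in $\text{Ch}_K(\bar{X})$ (Corollary 4.14, Proposition 4.15; recall that $\bar{X}$ is split since it has a rational point). Consequently every element of $\overline{\text{Ch}}_K(X)$ is a $\mathbb{Z}/2\mathbb{Z}$-linear combination of the squarefree monomials $e_L:=\prod_{k\in L}e_k$ with $L\subseteq J(h)$. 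Since these monomials form a subset of the basis of $\text{Ch}_K(\bar{X})$ provided by Theorem 4.9, they are linearly independent, so $\{e_L : L\subseteq J(h)\}$ is in fact a $\mathbb{Z}/2\mathbb{Z}$-basis of $\overline{\text{Ch}}_K(X)$.

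Next I would bring in the grading. Each monomial $e_L$ is homogeneous: since $e_k\in\text{Ch}^k_K(\bar{X})$, the codimension of $e_L$ equals $\sum_{k\in L}k=||L||$. Thus the graded component $\overline{\text{Ch}}^i_K(X)$ is spanned by those $e_L$ with $L\subseteq J(h)$ and $||L||=i$. As every subset $L$ of $J(h)$ satisfies $||L||\leq ||J(h)||$, with equality only for $L=J(h)$, the graded components in codimension exceeding $||J(h)||$ vanish.

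It then remains to check that codimension $||J(h)||$ is actually attained, i.e.\ that $\overline{\text{Ch}}^{||J(h)||}_K(X)\neq 0$. This is witnessed by the single monomial $e_{J(h)}=\prod_{k\in J(h)}e_k$: it lies in $\overline{\text{Ch}}_K(X)$, being a product of rational elements, and it is nonzero because $J(h)$ is a subset of the odd integers in the relevant range, so $e_{J(h)}$ is one of the basis monomials of $\text{Ch}_K(\bar{X})$ listed in Theorem 4.9. Combining the two bounds yields that the largest codimension $i$ with $\overline{\text{Ch}}^i_K(X)\neq 0$ is exactly $||J(h)||$.

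There is no deep obstacle here; the statement is essentially a bookkeeping consequence of the basis description. The only point demanding care is the very first step, namely confirming that the squarefree monomials in the rational generators $e_k$, $k\in J(h)$, genuinely form a basis of $\overline{\text{Ch}}_K(X)$ — this is where Theorem 5.7 (ring generation by the rational $e_k$), the relation $e_k^2=0$ (reduction of any product to a squarefree monomial), and Theorem 4.9 (linear independence of the $e_L$) must be correctly combined.
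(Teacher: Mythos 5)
Your argument is correct and is essentially the paper's own proof, just written out in more detail: the paper likewise observes that $\prod_{k\in J(h)}e_k$ is non-trivial by Theorem 4.9 and that Theorem 5.7 (together with the relations $e_k^2=0$) bounds the codimension of any non-trivial rational element by $||J(h)||$. The extra care you take in justifying that the squarefree monomials $e_L$, $L\subseteq J(h)$, form a basis of $\overline{\text{Ch}}_K(X)$ is exactly the bookkeeping the paper leaves implicit.
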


\begin{proof}
The element $\prod_{k\in J(h)} e_k \in {\overline{\text{Ch}}_K}(X)$ is non-trivial by Theorem 4.9 and has the biggest codimension amongst the non-trivial elements of ${\overline{\text{Ch}}_K}(X)$ by Theorem 5.7.
\end{proof}

\begin{prop}
\textit{A non-degenerate $K/F$-hermitian form $h$ is split if and only if $J(h)$ is maximal.}
\end{prop}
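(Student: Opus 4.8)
The plan is to prove both implications after an initial reduction, noting that ``maximal'' means $J(h)$ equals the full index set $S$, which is $[1,\,2n+1]_{\text{od}}$ when $\dim(h)=2n+2$ and $[3,\,2n+1]_{\text{od}}$ when $\dim(h)=2n+1$. By Corollary 4.7 the product $\prod_{k\in S}e_k$ is the class of a rational point of $\bar{X}$ and generates $\text{Ch}_K^{\dim X}(\bar{X})=\mathbb{Z}/2\mathbb{Z}$; in particular $\dim X=||S||$, and this point class is the unique basis element of $\text{Ch}_K(\bar{X})$ in top codimension by Theorem 4.9. Proposition 6.1 identifies the largest codimension carrying a nonzero rational class with $||J(h)||$, so the point class is rational (lies in $\overline{\text{Ch}}_K(X)$) if and only if $||J(h)||=||S||$, i.e.\ if and only if $J(h)=S$ is maximal. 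Hence it suffices to show that $h$ is split if and only if the class of a rational point of $\bar{X}$ is rational.

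For the easy direction, if $h$ is split then the quadric $Q$ and the grassmannian $X$ are already split over $F$, so the correspondence $\gamma$ together with its decomposition (4.1) (or (4.2)) is defined over $F$; consequently every generator $e_k$ lies in $\text{Ch}_K(X)$ and the restriction $\text{Ch}_K(X)\to\text{Ch}_K(\bar{X})$ is onto, in fact an isomorphism by Theorem 4.9. In particular the point class is rational, which gives $J(h)$ maximal.

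For the converse I would argue that rationality of the point class forces an odd-degree closed point on $X$. A lift in $\text{Ch}_K^{\dim X}(X)$ of the generator of $\text{Ch}_K^{\dim X}(\bar{X})=\mathbb{Z}/2\mathbb{Z}$ is represented by a zero-cycle $\sum_i n_i[x_i]$ on $X$ of odd total degree $\sum_i n_i[F(x_i):F]$; here the points with $K\subset F(x_i)$ lie in the norm ideal and contribute nothing, which is consistent since such residue fields have even degree over $F$. Hence some residue field $E=F(x_i)$ satisfies $[E:F]$ odd, and since $X(E)\neq\emptyset$ the form $h_E$ is split, so $i_0(h_E)=r$ with $r=\lfloor\dim(h)/2\rfloor$. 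Using $i_0(q)=2i_0(h)$ (\cite[Lemma 9.1]{UG}) we obtain $i_0(q_E)=2r$, and Springer's theorem for quadratic forms over the odd extension $E/F$ (\cite[Corollary 18.5]{EKM}) yields $i_0(q)=i_0(q_E)=2r$; applying the same relation over $F$ gives $i_0(h)=r$, that is, $h$ is split.

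The main obstacle is this converse, and within it the passage from ``the top Chow class is rational modulo the norm ideal'' to ``$h$ is split''. The delicate points are the bookkeeping in the quotient ring $\text{Ch}_K$, where one must check that an odd-degree contribution survives modulo the norm ideal — automatic because a field containing $K$ has even degree over $F$ — and the invocation of Springer's theorem through Jacobson's correspondence between $h$ and $q$, where the identity $i_0(q)=2i_0(h)$ is what transfers the conclusion back to the hermitian form.
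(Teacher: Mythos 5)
Your proof is correct and follows essentially the same route as the paper: the forward direction is immediate from the definition, and the converse extracts an odd-degree closed point from the rationality of the point class (using that the degree map is well defined on $\text{Ch}_K$ because the norm ideal consists of even-degree zero-cycles) and then transfers splitting back via $i_0(q)=2i_0(h)$ and Springer's theorem. The only cosmetic difference is your initial reduction through Proposition 6.1, where the paper simply invokes Corollary 4.7 directly.
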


\begin{proof}
If $h$ is split then the fact that $J(h)$ is maximal follows from the definition.
If $J(h)$ is maximal then, by Corollary 4.7,
the class of a rational point of $\bar{X}$ belongs to ${\overline{\text{Ch}}_K}(X)$.
Consequently, the variety $X$ admits a closed point $x$ of odd degree 
(recall that the degree map is well defined on $\text{Ch}_K$). Combining the fact that the 
residue field $F(x)$ is a splitting field of $h$ with Springer's Theorem for quadrics, one get the identities
\[\lfloor \text{dim}(h)/2  \rfloor = i_0(h_{F(x)})=i_0(q_{F(x)})/2=i_0(q)/2=i_0(h).\]
Therefore $h$ is split.
\end{proof}

\begin{lemme}
\textit{Let $h=\tilde{h}\,\bot \,\mathbb{H}$. Then $J(h)=J(\tilde{h})\cup\{2n+1\}$.}
\end{lemme}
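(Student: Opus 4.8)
The plan is to read off both inclusions from the two functorial relations of Lemma 4.5 together with the identification of Lemma 4.4, after fixing an appropriate splitting field. Since $h=\tilde{h}\perp\mathbb{H}$, the form $\tilde{h}$ is, up to isomorphism, the form induced on $L^{\perp}/L$ for $L$ an isotropic line inside the hyperbolic summand, so we are exactly in the situation of Section 3: $i:\tilde{X}\hookrightarrow X$ is the closed embedding and $\beta:\tilde{X}\leadsto X$ the correspondence introduced there. Over every extension $E/F$ one has $i_0(h_E)=i_0(\tilde{h}_E)+1$, so $h$ splits over $E$ if and only if $\tilde{h}$ does; in particular $h$ and $\tilde{h}$ share a splitting field $E$ that does not split $K$ (for instance $E=F(X)$, which splits $h$ and therefore $\tilde{h}$, and for which $K\otimes_F E$ is a field). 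By the remark following the definition of $\overline{\text{Ch}}_K$, rationality of cycles on $X$ and on $\tilde{X}$ may both be tested by restriction to $X_E$ and $\tilde{X}_E$; over $E$ both grassmannians are split, the generators $e_k$ and $\tilde{e}_k$ are defined, and the relations of Lemma 4.5 hold.

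First I would show $2n+1\in J(h)$. By Lemma 4.4 one has $e_{2n+1}=[\tilde{X}]$, and $[\tilde{X}]$ is the class of a closed subvariety of $X$ defined over $F$; restricting the corresponding element of $\text{Ch}_K(X)$ to $X_E$ yields $e_{2n+1}$, so $e_{2n+1}\in\overline{\text{Ch}}_K(X)$ and $2n+1\in J(h)$. Since the index range for $J(\tilde{h})$ is contained in $[1,\;2n-1]_{\text{od}}$ (resp.\ $[3,\;2n-1]_{\text{od}}$ in the odd case, where necessarily $n\geq 1$), the index $2n+1$ never lies in $J(\tilde{h})$, so the asserted union is disjoint and genuinely adds a new top index.

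Next I would match the remaining indices. Let $k$ be odd with $k\in[1,\;2n-1]$ for even $h$, or $k\in[3,\;2n-1]$ for odd $h$; these are precisely the indices covered by Lemma 4.5, which gives ${(\beta_{\ast})}_K(\tilde{e}_k)=e_k$ and ${(i^{\ast})}_K(e_k)=\tilde{e}_k$ over $E$. Because $\beta$ and $i$ are defined over $F$, the push-forward ${(\beta_{\ast})}_K$ and the pull-back ${(i^{\ast})}_K$ commute with restriction to $E$ and hence carry rational cycles to rational cycles. Thus if $\tilde{e}_k\in\overline{\text{Ch}}_K(\tilde{X})$, then $e_k={(\beta_{\ast})}_K(\tilde{e}_k)\in\overline{\text{Ch}}_K(X)$; conversely if $e_k\in\overline{\text{Ch}}_K(X)$, then $\tilde{e}_k={(i^{\ast})}_K(e_k)\in\overline{\text{Ch}}_K(\tilde{X})$. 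Hence $e_k$ is rational if and only if $\tilde{e}_k$ is, i.e.\ $k\in J(h)$ if and only if $k\in J(\tilde{h})$. Combining this with the previous paragraph gives $J(h)=J(\tilde{h})\cup\{2n+1\}$.

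The routine points are the compatibility of ${(\beta_{\ast})}_K$ and ${(i^{\ast})}_K$ with base change and the descent of Lemma 4.5 through the quotient defining $\text{Ch}_K$. The one genuinely delicate step is the choice of a common splitting field together with the invocation of the independence-of-splitting-field remark, which is what licenses comparing rationality on $X$ and on $\tilde{X}$ within a single base change; everything else is bookkeeping of the index ranges.
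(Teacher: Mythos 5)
Your proof is correct and follows essentially the same route as the paper: Lemma 4.4 gives $2n+1\in J(h)$, and the identification of $e_k$ with $\tilde{e}_k$ in codimensions $\leq 2n-1$ via the $F$-defined correspondences $\beta$ and $i$ (which is exactly what the paper's decomposition (3.5) packages, together with Lemma 4.5/4.8) transfers rationality in both directions. Your explicit attention to the common splitting field and the independence-of-splitting-field remark is a point the paper leaves implicit, but it is the same argument.
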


\begin{proof}
Since $e_{2n+1}=[\tilde{X}]$ (see Lemma 4.4), one has $2n+1\in J(h)$.
Let $i\leq 2n-1$.
From decomposition (3.3) (where $d=2n+1$), one get $\text{Ch}_K^i(X)\simeq \text{Ch}_K^i(\tilde{X})$ with
$e_i$ corresponding to $\tilde{e}_i$ by Lemma 4.5(i). 
The conclusion follows.
%Hence $e_i\in J(h)$ if and only if $\tilde{e}_i\in J(\tilde{h})$.
\end{proof}

\begin{cor}
\textit{Let $h$ and $h'$ be Witt-equivalent $K/F$-hermitian forms with $h\simeq h'\,\bot \,j\mathbb{H}$. 
Then $J(h)=J(h')\cup \{2n+1, 2n-1,\dots, 2n+1-2(j-1)\}$}.
\end{cor}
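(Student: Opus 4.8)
The plan is to proceed by induction on $j$, using Lemma 6.10 (the single-hyperbolic-plane case) as the inductive engine. The base case $j=0$ is trivial: then $h\simeq h'$ and the adjoined set is empty, so $J(h)=J(h')$; alternatively one may take $j=1$ as the base case, which is exactly Lemma 6.10. The Witt-equivalence hypothesis is automatic from $h\simeq h'\,\bot\,j\mathbb{H}$ and serves only to pin down $h'$ up to isometry.

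For the inductive step, I would write $h\simeq \tilde{h}\,\bot\,\mathbb{H}$ with $\tilde{h}:=h'\,\bot\,(j-1)\mathbb{H}$, so that $\dim(\tilde{h})=\dim(h)-2$. Writing $\dim(h)=2n+2$ (resp. $2n+1$), applying Lemma 6.10 to this decomposition gives
\[J(h)=J(\tilde{h})\cup\{2n+1\}.\]
The crucial bookkeeping point is that $\dim(\tilde{h})=2(n-1)+2$ (resp. $2(n-1)+1$), so the relevant top index for $\tilde{h}$ is $2(n-1)+1=2n-1$. The induction hypothesis, applied to $\tilde{h}=h'\,\bot\,(j-1)\mathbb{H}$, then yields
\[J(\tilde{h})=J(h')\cup\{2n-1,\,2n-3,\,\dots,\,2n+1-2(j-1)\}.\]
Combining the two displays gives $J(h)=J(h')\cup\{2n+1,\,2n-1,\,\dots,\,2n+1-2(j-1)\}$, which is the desired conclusion.

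The only point requiring care — rather than a genuine obstacle — is the index shift: each application of Lemma 6.10 lowers the dimension by $2$ and hence lowers the top odd index by $2$, so one must verify that the successive top indices $2n+1,2n-1,\dots$ assemble into exactly the interval of $j$ consecutive odd integers from $2n+1-2(j-1)$ up to $2n+1$. I would also remark that the parity of $\dim(h)$ is preserved under $\bot\,\mathbb{H}$, so the same induction simultaneously covers both the even- and odd-dimensional cases in the definition of $J$, with no separate argument needed.
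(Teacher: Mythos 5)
Your proof is correct and is exactly the argument the paper intends: the corollary is stated there without proof as an immediate consequence of the single-hyperbolic-plane lemma (which is Lemma~6.3 in the paper, not~6.10), iterated $j$ times with precisely the index shift you describe. Your bookkeeping --- each application lowers $\dim(h)$ by $2$ and hence the adjoined top odd index by $2$, yielding the $j$ consecutive odd integers from $2n+1-2(j-1)$ to $2n+1$ --- matches what the paper leaves implicit.
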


The following statement is the $\text{Ch}_K$-version of
the result \cite[Lemma 88.5]{EKM} of N.\,A.\,Karpenko 
and A.\,S.\, Merkurjev for classical Chow groups (see Remark 2.1).

\begin{prop}
\textit{Let $Z$ be a smooth $F$-variety and $Y$ an equidimensional $F$-variety. Given an integer $m$ such that for any nonnegative integer $i$ and any point $y\in Y$ of codimension $i$
the change of field homomorphism }
\[\text{Ch}_K^{m-i}(Z)\longrightarrow \text{Ch}_K^{m-i}(Z_{F(y)})\]
\textit{is surjective, the change of field homomorphism}
\[\text{Ch}_K^m(Y)\longrightarrow \text{Ch}_K^m(Y_{F(Z)})\]
\textit{is also surjective.}
\end{prop}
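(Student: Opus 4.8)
The plan is to transpose the proof of the classical statement \cite[Lemma 88.5]{EKM}, the essential point being that, by Remark 2.1, $\text{Ch}_K$ arises as part of the (codimension-graded) cycle cohomology attached to the cycle module $E\mapsto K^M_{\ast}(E)/N_{\ast}(E)$. Consequently $\text{Ch}_K$ enjoys the localization exact sequence, continuity (compatibility with filtered limits of opens), flat pull-back, proper push-forward, and the base-change compatibility between the two, and these are the only properties the argument will use. Throughout I write $\mathrm{pr}_Y,\mathrm{pr}_Z$ for the two projections off $Y\times Z$, and I reduce at once to the case $Y$ irreducible, with generic point $\eta$ and $F(Y)=F(\eta)$. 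The case $m=0$ is disposed of separately: there $\text{Ch}_K^0$ is the $\mathbb{Z}/2$-vector space on the set of connected components, and every component of $Y_{F(Z)}$ lies over one of $Y$, so the change-of-field map is surjective for free. Hence I assume $m\geq 1$ and induct on $\dim Y$, the case $\dim Y=0$ being covered by the previous remark.

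Given $\bar\alpha\in\text{Ch}_K^m(Y_{F(Z)})$, continuity lets me spread it to a class $\alpha\in\text{Ch}_K^m(Y\times U)$ for some dense open $U\subseteq Z$. The first key step is to feed in the hypothesis \emph{at the generic point} $\eta$ of $Y$, i.e.\ at codimension $i=0$: this says precisely that $\text{Ch}_K^m(Z)\to\text{Ch}_K^m(Z_{F(Y)})$ is surjective. Restricting $\alpha$ to the generic fibre $U_{F(Y)}$ of $\mathrm{pr}_Y$ and composing the above surjection with the open restriction $\text{Ch}_K^m(Z_{F(Y)})\to\text{Ch}_K^m(U_{F(Y)})$, I obtain $\zeta\in\text{Ch}_K^m(Z)$ whose flat pull-back $\mathrm{pr}_Z^{\ast}\zeta$ agrees with $\alpha$ on $U_{F(Y)}$. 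Thus $\alpha-\mathrm{pr}_Z^{\ast}\zeta$ vanishes on the generic fibre of $\mathrm{pr}_Y$, so by continuity and localization it is supported over a proper closed $Y'\subsetneq Y$; decomposing $Y'$ into irreducible components $Y'_j$ of codimension $c_j\geq 1$, I write $\alpha-\mathrm{pr}_Z^{\ast}\zeta=\sum_j (\iota_j)_{\ast}\alpha'_j$ with $\iota_j:Y'_j\times U\hookrightarrow Y\times U$ the closed embeddings and $\alpha'_j\in\text{Ch}_K^{m-c_j}(Y'_j\times U)$ (the codimension shift accounting for $(\iota_j)_{\ast}$).

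Now I restrict this identity to $Y_{F(Z)}$, the generic fibre of $\mathrm{pr}_Z$. The term $\mathrm{pr}_Z^{\ast}\zeta$ dies, since $\zeta$ restricts to $0$ in $\text{Ch}_K^m(\text{Spec}\,F(Z))=0$ (this uses $m\geq 1$). Using that flat base change commutes with proper push-forward, I get $\bar\alpha=\sum_j(\iota'_j)_{\ast}\big(\alpha'_j|_{(Y'_j)_{F(Z)}}\big)$ with $\iota'_j:(Y'_j)_{F(Z)}\hookrightarrow Y_{F(Z)}$. The hypothesis for $Y$ is inherited by each $Y'_j$ with $m$ replaced by $m-c_j$: a codimension-$i'$ point $y'$ of $Y'_j$ has codimension $i'+c_j$ in $Y$, so the surjectivity of $\text{Ch}_K^{(m-c_j)-i'}(Z)\to\text{Ch}_K^{(m-c_j)-i'}(Z_{F(y')})$ is exactly an instance of the assumed hypothesis for $Y$ at $y'$. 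Since $\dim Y'_j<\dim Y$, the induction hypothesis yields $\beta'_j\in\text{Ch}_K^{m-c_j}(Y'_j)$ restricting to $\alpha'_j|_{(Y'_j)_{F(Z)}}$, and then $\beta:=\sum_j(\iota_j)_{\ast}\beta'_j\in\text{Ch}_K^m(Y)$ restricts to $\bar\alpha$, again because change of field commutes with the $(\iota_j)_{\ast}$.

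The step I expect to require the most care is the bookkeeping with the two projections: the class to be lifted lives on the generic fibre of $\mathrm{pr}_Z$, whereas the hypothesis enters through the generic fibre of $\mathrm{pr}_Y$, and one must keep track of the codimension shift $m\mapsto m-c_j$ that is exactly what makes the hypothesis self-propagating along the induction. The only genuinely new content relative to \cite{EKM} is the verification that $\text{Ch}_K$ obeys the cycle-theoretic formalism invoked (localization, continuity, flat pull-back, proper push-forward, base change), which is precisely what Remark 2.1 secures via the cycle module $K^M_{\ast}/N_{\ast}$ and rule \cite[R3b]{Rostchow}; granting this, the classical argument carries over essentially verbatim.
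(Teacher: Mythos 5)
Your overall strategy coincides with the paper's: Proposition 6.5 is stated there without a written proof, the justification being deferred entirely to Remark 2.1 (the quotient cycle module $K^M_{\ast}/N_{\ast}$ supplies localization, continuity, flat pull-back, proper push-forward and base change for $\text{Ch}_K$) together with the classical argument of \cite[Lemma 88.5]{EKM} --- which is exactly what you transpose. Your main induction (spreading $\bar\alpha$ over a dense open of $Z$, using the $i=0$ instance of the hypothesis to subtract a class pulled back from $Z$ that vanishes on the generic fibre of $\mathrm{pr}_Y$, pushing the remainder into closed subvarieties $Y'_j$ with the shift $m\mapsto m-c_j$, and checking that the hypothesis propagates to $(Y'_j,\,m-c_j)$) is correct and is the intended proof.

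The one step that does not hold as written is your dispatch of the case $m=0$. It is not true that $\text{Ch}_K^0(Y)\to\text{Ch}_K^0(Y_{F(Z)})$ is surjective ``for free'': a single irreducible component $Y_j$ of $Y$ may have several components of $Y_{F(Z)}$ above it (the primes of $F(Y_j)\otimes_F F(Z)$), and $[Y_j]$ restricts to their \emph{sum}, so an individual component class need not be in the image; moreover $\text{Ch}_K^0$ is spanned only by the components whose function field does not split $K/F$, not by all of them. This is not a throwaway case, because your induction recurses into the pairs $(Y'_j,\,m-c_j)$ and $m-c_j$ can equal $0$. The repair uses the hypothesis rather than bypassing it: the generic points of $(Y_j)_{F(Z)}$ and of $Z_{F(Y_j)}$ are both $\operatorname{Spec}\bigl(F(Y_j)\otimes_F F(Z)\bigr)$ with the same residue fields, so the $i=0$ instance of the hypothesis, namely the surjectivity of $\text{Ch}_K^0(Z)\rightarrow\text{Ch}_K^0(Z_{F(Y_j)})$, is precisely the codimension-$0$ surjectivity you need. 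Alternatively, drop the case split altogether: for $m=0$ the restriction of $\mathrm{pr}_Z^{\ast}\zeta$ to $Y_{F(Z)}$ need not vanish, but it is pulled back from $\text{Ch}_K^0(\operatorname{Spec}F(Z))$ and hence is a multiple of the fundamental class, which is the restriction of a class from $\text{Ch}_K^0(Y)$ --- and that is all the argument requires.
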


%Let us denote by $X_1$ the $F$-variety of isotropic $K$-lines in $V$ (i.e., the hermitian quadric of $h$).

For any integer $k$, let us denote by $X_k$ the $F$-variety of $k$-dimensional totally isotropic subspaces in $h$
(so $X_{\lfloor \text{dim}(h)/2 \rfloor}=X$ and $X_k=\emptyset$ for $k\notin [0,\; \lfloor \text{dim}(h)/2 \rfloor]$).
By the result \cite[Corollary 7.3]{UG} (which is also a consequence of \cite[Theorem 15.8]{rcs}),
for any integer $k$,
there is a Chow motivic decomposition with $\mathbb{Z}/2\mathbb{Z}$-coefficients
\begin{equation}
M(X_k)\simeq M_k \oplus M,
\end{equation}
where $M$ is a sum of shifts of $M\left(\text{Spec}(K)\right)$ and $M_k$ splits as a sum of Tate motives over any splitting field of $h$.
The motive $M_k$ is defined by $X_k$ uniquely up to an isomorphism and is called the \emph{essential}
motive of $X_k$.
 
\begin{lemme}
\textit{The change of field homomorphism} $\text{Ch}_K^i(X)\rightarrow \text{Ch}_K^i(X_{F(X_1)})$ \textit{is
surjective for any $i\leq 2n$ if} $\text{dim}(h)=2n+2$ \textit{and for any $i\leq 2n-1$ if} $\text{dim}(h)=2n+1$.
\end{lemme}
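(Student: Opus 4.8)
The plan is to deduce the statement from the transfer Proposition 6.5 applied to $Z=X_1$ and $Y=X$, exactly as is done in the quadratic-form situation of \cite[\S 88]{EKM}. Both $X_1$ and $X$ are smooth projective varieties (homogeneous under the unitary group), and $X$ is irreducible, hence equidimensional, so the hypotheses on $Z$ and $Y$ are satisfied. Taking $m=i$ in Proposition 6.5, it then suffices to check that for every nonnegative integer $j$ and every point $x\in X$ of codimension $j$ the change of field homomorphism
\[\text{Ch}_K^{i-j}(X_1)\longrightarrow \text{Ch}_K^{i-j}\bigl((X_1)_{F(x)}\bigr)\]
is surjective. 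For $i-j<0$ there is nothing to prove, so the matter reduces to the following claim: for every field extension $E/F$ and every $d$ with $0\le d\le i$, the change of field homomorphism $\text{Ch}_K^{d}(X_1)\to \text{Ch}_K^{d}\bigl((X_1)_E\bigr)$ is surjective.

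To prove this claim I would analyse the hermitian quadric $X_1$ over a splitting field of $h$ (which need not split $K$). Its dimension is $\text{dim}(X_1)=2\,\text{dim}(h)-3$, so that the two bounds $i\le 2n$ and $i\le 2n-1$ in the even and odd cases are precisely the condition that $d$ lie strictly below the middle dimension $\text{dim}(X_1)/2$ of $X_1$. In that range one expects $\text{Ch}_K^{d}\bigl((X_1)_E\bigr)$ to be a one-dimensional $\mathbb{Z}/2\mathbb{Z}$-space generated by the $d$-th power of a natural divisor class $\delta\in\text{Ch}_K^1(X_1)$ coming from the projective (Pl\"ucker) embedding of $X_1$. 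Since such a class $\delta$, and therefore $\delta^{d}$, is defined over $F$, its image generates $\text{Ch}_K^{d}\bigl((X_1)_E\bigr)$ for every $E$, which yields the desired surjectivity. This is the hermitian counterpart of the fact that the Chow group of an ordinary quadric below its middle dimension is generated by the powers of the hyperplane class (see \cite[\S 72]{EKM}), the ingredient used in the quadratic case.

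The main difficulty will be exactly this last structural input: computing $\text{Ch}_K(X_1)$ below the middle dimension and showing that there the only rational (indeed the only nonzero) classes are the powers of $\delta$. One must first describe the Chow ring of the split hermitian quadric through a cellular, in fact projective-bundle, decomposition, using the relative cellular space machinery of Section 3 and of \cite{rcs}: over a field splitting $K$ the variety $X_1$ becomes a point--hyperplane incidence variety, which is a projective bundle over a projective space and hence cellular with explicit generators. The delicate point is then to keep careful track of the norm ideal, since it is the passage to $\text{Ch}_K$, rather than to $\text{Ch}$, that collapses the low-codimension groups to one dimension and makes the invariant class $\delta^{d}$ their generator.

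I would also treat the even- and odd-dimensional cases of $h$ separately in this step, since the geometry of $X_1$ and the behaviour of the associated quadratic form $q$ (which splits only when $\text{dim}(h)$ is even, as recorded in Section 4) differ, accounting for the two distinct bounds $2n$ and $2n-1$. Once this description of $\text{Ch}_K^{d}(X_1)$ for $d<\text{dim}(X_1)/2$ is established, the reduction through Proposition 6.5 recorded in the first paragraph completes the proof with no further work.
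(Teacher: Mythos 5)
Your first step is exactly the paper's: apply Proposition 6.5 with $Z=X_1$ and $Y=X$ to reduce the lemma to the surjectivity of $\text{Ch}_K^{d}(X_1)\rightarrow \text{Ch}_K^{d}\bigl((X_1)_{F(x)}\bigr)$ for points $x\in X$ in the stated range of codimensions. (Note that Proposition 6.5 only requires this for the residue fields $F(x)$ with $x\in X$, which are splitting fields of $h$; you do not need, and should not claim, surjectivity over arbitrary $E/F$.) The genuine gap is in the structural input you then invoke, which you yourself flag as the main difficulty and which is in fact false as stated. Over a field $E$ splitting $K$ the Weil restriction $\Gamma^K(V)$ becomes a product of two projective spaces whose hyperplane classes $h_1,h_2$ are interchanged by the Galois action; the divisor class on $X_1$ coming from its projective embedding is $h_1+h_2$, which is the norm of a class defined over $K\otimes_F E$ and hence vanishes in $\text{Ch}_K$. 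More precisely, combining the decompositions $\text{Ch}_K(X_1)\simeq \text{Ch}_K(M_1)$ and $M(Q)\simeq M_1\oplus M_1(1)$ of \cite[Corollaries 7.3 and 9.6]{UG}, one finds that over a splitting field of $h$ the group $\text{Ch}^{d}(M_1)$ vanishes for \emph{odd} $d$ below the middle dimension and is at most one-dimensional for even $d$; in particular $\text{Ch}_K^{1}\bigl((X_1)_{F(x)}\bigr)=0$, so your class $\delta$ is zero there and its powers cannot generate anything. Any direct cellular computation would have to produce generators concentrated in even codimension that are not powers of a degree-one class, so the argument as designed would fail at this point.

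The paper sidesteps this computation entirely: using $\text{Ch}_K(X_1)\simeq \text{Ch}_K(M_1)$ together with $M(Q)\simeq M_1\oplus M_1(1)$, it transfers the surjectivity question from $X_1$ to the ordinary quadric $Q$ of $q$, where $\text{CH}^{i}(Q_{F(x)})$ is generated by the always-rational class $h^{i}$ for $i\leq 2n$ (resp.\ $i\leq 2n-1$) because $i_0(q_{F(x)})=2n+2$ (resp.\ $2n$). This is precisely the ``powers of the hyperplane class'' argument you had in mind, but it must be run on $Q$, not on $X_1$. Your plan is repairable by replacing your second step with this transfer through $Q$; as written, however, the key claim is incorrect and the proof does not go through.
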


\begin{proof}
By Proposition 6.5, it is sufficient to prove that for any $x \in X$
the change of field homomorphism $\text{Ch}_K^i(X_1)\rightarrow \text{Ch}_K^i({X_1}_{\,F(x)})$ is
surjective, for any $i\leq 2n$ if $\text{dim}(h)=2n+2$ and for any $i\leq 2n-1$ if $\text{dim}(h)=2n+1$, to get the conclusion.

%By the result \cite[Corollary 7.3]{UG} (which is also a consequence of \cite[Theorem 15.8]{rcs}),
%there is a Chow motivic decomposition with $\mathbb{Z}/2\mathbb{Z}$-coefficients
%\[M(X_1)\simeq M_1 \oplus M,\]
%where $M$ is a sum of shifts of $M\left(\text{Spec}(K)\right)$ and $M_1$ splits as a sum of Tate motives %over any
%splitting field of $h$.
%The motive $M_1$ is defined by $X_1$ uniquely up to an isomorphism and is called the \emph{essential}
%motive of $X_1$.
It follows from decomposition (6.6) 
%from the Chow motivic decomposition with $\mathbb{Z}/2\mathbb{Z}$-coefficients \cite[Corollary 7.3]{UG} %(which is also a consequence of \cite[Theorem 15.8]{rcs}) 
that 
\begin{equation}
\text{Ch}_K(X_1)\simeq \text{Ch}_K(M_1).
\end{equation} 
%where $M_1$ is the \textit{essential} Chow motive of $X_1$. 
Furthermore, by \cite[Corollary 9.6]{UG}, one has the following Chow motivic decomposition with $\mathbb{Z}/2\mathbb{Z}$-coefficients
\begin{equation}
M(Q)\simeq M_1 \oplus M_1\{1\}
\end{equation}
(where $Q$ is the smooth projective quadric associated with the non-degenerate quadratic form $q:V \rightarrow F$, $v\mapsto h(v,v)$).

Combining (6.8) with (6.9), we see that it suffices to show that for any $x \in X$ the change of field homomorphism 
\begin{equation} \text{Ch}_K^i(Q)\rightarrow \text{Ch}_K^i({Q}_{\,F(x)}) \end{equation}
is surjective, for any $i\leq 2n$ if $\text{dim}(h)=2n+2$ and for any $i\leq 2n-1$ if $\text{dim}(h)=2n+1$, to get the conclusion.
In fact, (6.10) is already surjective at the level of integral Chow groups. 
Indeed, since $F(x)$ is a splitting field
of the hermitan form $h$, one has $i_0(q_{\,F(x)})=2n+2$ or $2n$ depending on whether $\text{dim}(h)$
is respectively even or odd. Therefore, the group $\text{CH}^i(Q_{F(x)})$ is generated by $h^i$ (always rational) for $i\leq 2n$ or $i\leq 2n-1$ depending on whether $\text{dim}(h)$
is respectively even or odd (see \cite[\S 1]{ARC2} for example).

This completes the proof.
\end{proof}

\begin{cor}
$J(h)\subset J(h_{\,F(X_1)})\subset J(h) \cup \{2n+1\}.$
\end{cor}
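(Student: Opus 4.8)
The plan is to prove the two inclusions separately, the first being a formal consequence of functoriality and the second resting on the surjectivity Lemma 6.6 together with an injectivity statement for split grassmannians.

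First I would establish $J(h)\subset J(h_{F(X_1)})$ by pure naturality. The generators $e_k$ are defined over any splitting field of $h$ and are compatible with scalar extension (they come from the geometric decompositions (4.1)/(4.2) of $\gamma$, which are preserved under flat base change). Writing $\mathrm{res}$ for the various change-of-field homomorphisms, suppose $k\in J(h)$, i.e. $e_k=\mathrm{res}_{F\to F(X)}(\sigma)$ for some $\sigma\in\text{Ch}_K(X)$. Then $\mathrm{res}_{F\to F(X_1)}(\sigma)\in\text{Ch}_K(X_{F(X_1)})$ restricts over $F(X_1)(X)$ to $\mathrm{res}_{F\to F(X_1)(X)}(\sigma)$, which is exactly the base change of $e_k$, hence the $k$-th generator attached to $h_{F(X_1)}$. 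Thus that generator is rational for $h_{F(X_1)}$ and $k\in J(h_{F(X_1)})$. This step uses only commutativity of the restriction maps.

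The substance is the inclusion $J(h_{F(X_1)})\subset J(h)\cup\{2n+1\}$. Let $k\in J(h_{F(X_1)})$ with $k\neq 2n+1$. Since $k$ is odd and lies in $[1,2n+1]$ (resp. $[3,2n+1]$), excluding $2n+1$ forces $k\leq 2n-1$, which is precisely the range covered by Lemma 6.6 in both the even and odd cases. By hypothesis there is $\rho\in\text{Ch}_K^k(X_{F(X_1)})$ restricting over $F(X_1)(X)$ to the $k$-th generator of $h_{F(X_1)}$. By Lemma 6.6 the map $\text{Ch}_K^k(X)\to\text{Ch}_K^k(X_{F(X_1)})$ is surjective, so I may pick $\sigma\in\text{Ch}_K^k(X)$ lifting $\rho$. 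The goal is then to show that $\bar\sigma:=\mathrm{res}_{F\to F(X)}(\sigma)$ equals $e_k$ in $\text{Ch}_K^k(\bar X)$, which would give $e_k\in\overline{\text{Ch}}_K(X)$ and hence $k\in J(h)$.

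To obtain this equality I would restrict everything to the compositum $F(X)(X_1)=F(X_1)(X)$. There $\bar\sigma$ restricts to $\mathrm{res}(\sigma)=\rho|_{F(X_1)(X)}$, the base change of the $k$-th generator, while $e_k$ restricts to the same base change; hence $\bar\sigma-e_k$ vanishes in $\text{Ch}_K^k(\bar X_{F(X_1)})$. The main obstacle is therefore knowing that the restriction $\text{Ch}_K^k(\bar X)\to\text{Ch}_K^k(\bar X_{F(X_1)})$ is injective. Here I would use that $h$ is already split over $F(X)$, so both $\bar X$ and $\bar X_{F(X_1)}$ are split maximal unitary grassmannians (the function fields $F(X)$ and $F(X)(X_1)$ do not split $K$, so $\text{Ch}_K$ is meaningful and the description applies); by Theorem 4.9 each of these rings has the canonical monomial basis $\{e_{I_{od}}\}$, and scalar extension carries this basis bijectively onto the corresponding one, so the restriction is an isomorphism, in particular injective. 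Consequently $\bar\sigma=e_k$, giving $k\in J(h)$. Combining the two inclusions proves the corollary.
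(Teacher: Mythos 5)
Your argument is correct and is precisely the intended derivation: the paper states this corollary without proof as an immediate consequence of Lemma 6.6, and your two steps (functoriality of rationality for the first inclusion; lifting via the surjectivity of Lemma 6.6 plus the fact that restriction between the split rings $\text{Ch}_K(\bar X)\to\text{Ch}_K(\bar X_{F(X_1)})$ carries the monomial basis of Theorem 4.9 to itself, hence is injective) are exactly the details being left to the reader. The range check $k\leq 2n-1$ matching the hypotheses of Lemma 6.6 in both parities is the right point to verify.
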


The following proposition relates the set $J(h)$ and the absolute Witt indices of $h$.
It follows from Corollaries 6.4 and 6.11.

\begin{prop}
\textit{Let $h$ be a non-degenerate $K/F$-hermitian form of dimension $2n+2$ or $2n+1$ with height 
$\mathfrak{h}(h)$. Then
$J(h)$ contains the complementary of the set}
\[\{2n+1-2j_0(h), 2n+1-2j_1(h),\dots , 2n+1- 2j_{\mathfrak{h}(h)-1}(h)\}\]
\textit{in $[1,\;2n+1]_{\text{od}}$, excluding $1$ for $h$ of odd dimension.
In particular, $|J(h)|\geq n- \mathfrak{h}(h)$ and the inequality is strict for $h$ of even dimension.}
\end{prop}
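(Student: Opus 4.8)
The plan is to argue by induction on the height $\mathfrak{h}(h)$, using Corollary 6.10 to descend the generic splitting tower one step at a time and Corollary 6.4 to strip off the hyperbolic part. Throughout I work with the complement of $J(h)$ inside its ambient index set, namely $[1,\;2n+1]_{\text{od}}$ for $h$ of even dimension and $[3,\;2n+1]_{\text{od}}$ for $h$ of odd dimension, so that the exceptional element $1$ (always absent from $J$ in the odd case) never interferes. Recall that $j_0(h)=i_0(h)$ and that $j_0(h)<j_1(h)<\cdots<j_{\mathfrak{h}(h)}(h)=\lfloor\dim(h)/2\rfloor$. The base case $\mathfrak{h}(h)=0$ is exactly the split case: by Proposition 6.2 the set $J(h)$ is then maximal, and the asserted family $\{2n+1-2j_r(h):0\leq r\leq\mathfrak{h}-1\}$ is empty, so there is nothing to exclude.

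For the inductive step I first treat an \emph{anisotropic} $h$ of height $\mathfrak{h}\geq 1$, so that $j_0(h)=0$. Set $h''=h_{F(X_1)}$. By the standard generic splitting theory (whose behaviour here is controlled by the associated quadratic form $q$ through $i_0(q)=2\,i_0(h)$, as already exploited in Proposition 6.2), the form $h''$ is isotropic of height $\mathfrak{h}-1$, and since its generic splitting tower is the tail of that of $h$ one has $j_r(h'')=j_{r+1}(h)$ for $0\leq r\leq\mathfrak{h}-1$. The induction hypothesis applied to $h''$ then bounds the complement of $J(h'')$ by $\{2n+1-2j_1(h),\dots,2n+1-2j_{\mathfrak{h}-1}(h)\}$. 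On the other hand, Corollary 6.10 gives $J(h)\subset J(h'')\subset J(h)\cup\{2n+1\}$; taking complements shows that the complement of $J(h)$ is contained in that of $J(h'')$ together with the single element $2n+1$, hence in $\{2n+1-2j_1(h),\dots,2n+1-2j_{\mathfrak{h}-1}(h)\}\cup\{2n+1\}$. Because $j_0(h)=0$ we have $2n+1=2n+1-2j_0(h)$, and this is exactly the assertion for anisotropic $h$.

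It remains to drop the anisotropy assumption while staying at height $\mathfrak{h}$. Writing $h\simeq h'\,\bot\,j_0(h)\mathbb{H}$ with $h'$ anisotropic, Corollary 6.4 yields $J(h)=J(h')\cup\{2n+1,2n-1,\dots,2n+3-2j_0(h)\}$, where the adjoined set is the top $j_0(h)$ odd numbers and $J(h')$ is identified, via Lemma 4.8, with a subset of the remaining bottom odd numbers $[1,\;2n+1-2j_0(h)]_{\text{od}}$. Consequently the complement of $J(h)$ in its ambient set equals the complement of $J(h')$ inside $[1,\;2n+1-2j_0(h)]_{\text{od}}$ (respecting the exclusion of $1$ in the odd case). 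Now $h'$ is anisotropic of the \emph{same} height $\mathfrak{h}$ with $j_r(h')=j_r(h)-j_0(h)$, so the anisotropic case just proved bounds this complement by $\{2m+1-2j_r(h'):0\leq r\leq\mathfrak{h}-1\}$ with $2m+1=2n+1-2j_0(h)$; the identity $2m+1-2j_r(h')=2n+1-2j_r(h)$ returns precisely the claimed exceptional set.

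Finally, the ``in particular'' is a counting statement. The set $\{2n+1-2j_r(h):0\leq r\leq\mathfrak{h}-1\}$ has exactly $\mathfrak{h}$ distinct elements (the $j_r(h)$ being strictly increasing), all lying in the ambient index set since $j_r(h)\leq j_{\mathfrak{h}-1}(h)<\lfloor\dim(h)/2\rfloor$. In the even case the ambient set $[1,\;2n+1]_{\text{od}}$ has $n+1$ elements, giving $|J(h)|\geq n+1-\mathfrak{h}>n-\mathfrak{h}$; in the odd case the ambient set $[3,\;2n+1]_{\text{od}}$ has $n$ elements, giving $|J(h)|\geq n-\mathfrak{h}$. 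I expect the main obstacle to be the careful bookkeeping of the two index shifts that occur — the one-step descent of the tower ($j_r\mapsto j_{r+1}$, $\mathfrak{h}\mapsto\mathfrak{h}-1$) versus the anisotropic reduction ($j_r\mapsto j_r-j_0$, unchanged height) — and in particular the point that the latter does \emph{not} lower the height. This is what forces the anisotropic case to be established before the general one at each level, rather than as a one-off preliminary reduction, so the induction must be organised accordingly.
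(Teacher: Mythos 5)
Your proof is correct and follows exactly the route the paper intends: Proposition 6.11 is stated there with only the remark that it ``follows from Corollaries 6.4 and 6.10,'' and your height induction --- descending the generic splitting tower via Corollary 6.10 and stripping hyperbolic planes via Corollary 6.4, with the anisotropic case settled before the general one at each level --- is precisely the argument being alluded to. The bookkeeping of the two index shifts and the final count (including the strictness in the even-dimensional case) are all accurate.
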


\section{Steenrod operations}

Let $h$ be a non-degenerate $K/F$-hermitian form on $V$ of dimension $2n+2$ or $2n+1$
and let $X$ be the variety of maximal totally isotropic subspaces in $h$.
This section is the hermitian replica of \cite[\S 89]{EKM}, where we compute the Steenrod operations
on $\text{Ch}(\bar{X})$.

\medskip

%Let $Q$ be the projective quadric
%of $q:V \rightarrow F$, $v\mapsto h(v,v)$, and let $E$ be the vector bundle over $X$ defined 
%in \S 3.2.
%Let $s: \mathbb{P}(E)\rightarrow X$ and $t: \mathbb{P}(E)\rightarrow Q$ be the projections.
We use notation introduced in the previous sections and we write $\pi_X$ and $\pi_Q$ for 
the respective compositions $p_X\circ in$ and $p_Q\circ in$.
Let $\mathcal{L}$ be the canonical line bundle over $\mathbb{P}(E)$ and $\mathcal{T}$ the relative tangent
bundle of $\pi_X$.
By \cite[Example 104.20]{EKM}, there is an exact sequence of vector bundles over $\mathbb{P}(E)$:
\[0\rightarrow \mathbbm{1}\rightarrow \mathcal{L} \otimes \pi_X^{\ast}(E)\rightarrow \mathcal{T} \rightarrow 0.\]
Hence, since $c_i(E)=0$ in $\text{Ch}(\bar{X})$ for all $i>1$, $c_1(E)=0$ in $\text{Ch}(\bar{X})$ for even-dimensional $h$ and $c_1(E)=0$ in $\text{Ch}_K(\bar{X})$ for odd-dimensional $h$ (follows from Proposition 4.10
and the fact that
$\text{Ch}_K^1(\bar{X})$ is trivial in the odd case), one deduces from the Whitney Sum Formula and \cite[Remark 3.2.3 (b)]{Ful} that 
\[ c(\mathcal{T})=c\left(\mathcal{L} \otimes \pi_X^{\ast}(E)\right)=c\left(\mathcal{L} \otimes \mathbbm{1}^{2r}\right)=c(\mathcal{L})^{2r}\]
in $\text{Ch}(\bar{X})$ if $\text{dim}(h)$ is even, in $\text{Ch}_K(\bar{X})$ otherwise,
with  $r=\lfloor \text{dim}(h)/2 \rfloor$ (recall that $E$ has rank $2r$).
Furthermore, since $\mathcal{L}$ coincides with the pull-back with respect to $\pi_Q$ of the canonical line bundle over $Q$, 
one has $c(\mathcal{L})=1+\pi_Q^{\ast}(h^1)$ in $\text{CH}(Q)$. Consequently, one has
\begin{equation} c(\mathcal{T})=\left(1+\pi_Q^{\ast}(h^1)\right)^{2r} \end{equation}
in $\text{Ch}(\bar{X})$ if $\text{dim}(h)$ is even, in $\text{Ch}_K(\bar{X})$ otherwise.

\medskip

The following statement is the $K/F$-hermitian analogue of the result 
 \cite[Theorem 4.1]{Grass} 
for quafratic forms.

\begin{thm}
\textit{Assume} $\text{char}(F)\neq 2$. \textit{Let $h$ be a non-degenerate $K/F$-hermitian form of dimension $2n+2$ or $2n+1$ and $X$ the variety of maximal totally isotropic subspaces in $h$.
Let} $S_{\bar{X}}: \text{Ch}(\bar{X})\rightarrow \text{Ch}(\bar{X})$ \textit{denote the Steenrod operation of cohomological
type on $\bar{X}$. Then one has}
\[S_{\bar{X}}^i(e_k)={k \choose i} e_{k+i}\]
\textit{in} $\text{Ch}(\bar{X})$ \textit{if} $\text{dim}(h)$ \textit{is even, in} $\text{Ch}_K(\bar{X})$ \textit{otherwise, for all $i$ and $k\in [2,\; 2n+1]$
and also for $i=k=1$ if} $\text{dim}(h)$ \textit{is even.}
\end{thm}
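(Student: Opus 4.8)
The plan is to compute the Steenrod operation $S_{\bar{X}}$ on the generators $e_k$ by pushing the computation through the incidence correspondence $\gamma$ (equivalently the embedding $in:\mathbb{P}(E)\hookrightarrow Q\times X$), exactly as is done for quadratic forms in \cite[Theorem 89.3]{EKM}, but working in $\text{Ch}(\bar{X})$ (even case) or $\text{Ch}_K(\bar{X})$ (odd case). The key input is formula (7.1) for the total Chern class of the relative tangent bundle $\mathcal{T}$ of $\pi_X$, which encodes all the geometry needed; the rest is an application of the Riemann--Roch-type formula relating Steenrod operations to push-forwards along $\pi_X$.

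First I would recall the relation (4.3), namely $e_k=(\pi_X)_{\ast}\circ \pi_Q^{\ast}(l_{2n+1-k})$, which presents each generator as a push-forward of a canonical class on $Q$ along the projective bundle map $\pi_X$. Then I would invoke the compatibility of Steenrod operations with push-forwards along smooth proper morphisms, in the form given by \cite[\S 59 or Proposition 59.8]{EKM}: for a morphism $\pi_X$ with relative tangent bundle $\mathcal{T}$, one has $S_{\bar{X}}\circ (\pi_X)_{\ast}=(\pi_X)_{\ast}\circ (c(\mathcal{T})^{-1}\text{-twisted }S_{\mathbb{P}(E)})$, so that applying $S_{\bar{X}}$ to $e_k$ reduces to computing $S_{\mathbb{P}(E)}$ on $\pi_Q^{\ast}(l_{2n+1-k})$ and multiplying by the total Steenrod class $c(\mathcal{T})$. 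Here I would use (7.1), which gives $c(\mathcal{T})=(1+\pi_Q^{\ast}(h^1))^{2r}$, together with the known action of $S_Q$ on the classes $l_j$ and $h^j$ of the split quadric (from \cite[\S 78]{EKM}), pulled back via $\pi_Q$. Binding these together and collecting the binomial coefficients from $(1+h^1)^{2r}$ and from $S_Q^i(l_{2n+1-k})$ should, after a Vandermonde-type identity on binomial coefficients, produce exactly the coefficient $\binom{k}{i}$ in front of $e_{k+i}=(\pi_X)_{\ast}\pi_Q^{\ast}(l_{2n+1-k-i})$.

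\textbf{The main obstacle} I expect is bookkeeping in the odd-dimensional case. In the even case everything takes place in the torsion-free ring $\text{CH}(\bar{X})$ modulo $2$, so the computation is a faithful transcription of \cite[Theorem 89.3]{EKM}. In the odd case, however, formula (7.1) and the vanishing $c_1(E)=0$ hold only in $\text{Ch}_K(\bar{X})$, not in $\text{Ch}(\bar{X})$; one must check that every step of the push-forward computation descends to the quotient by the norm ideal. This is precisely where the $\text{Ch}_K$-formalism of Remark 2.1 is needed: the Steenrod operation, the push-forward $(\pi_X)_{\ast}$, and the pull-back $\pi_Q^{\ast}$ must all be verified to preserve the norm ideal so that they induce well-defined operations on $\text{Ch}_K$. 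Since Steenrod operations arise from the cycle-module cohomology theory attached to $K^M_{\ast}/N_{\ast}$, and push-/pull-backs preserve norm ideals by the very setup of Section 2, this descent should go through cleanly; verifying it is the one genuinely new point beyond the quadratic-form argument. Finally I would treat the boundary cases separately: the excluded index $k=1$ in the odd case, which must be handled via $\text{Ch}_K^1(\bar{X})=0$, and the case $i=k=1$ for even $h$, which follows directly since $S^1_{\bar{X}}(e_1)=e_1^2$ matches $\binom{1}{1}e_2$ by relation (4.13).
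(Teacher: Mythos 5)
Your proposal is correct and follows essentially the same route as the paper: express $e_k$ via (4.3) as $(\pi_X)_{\ast}\pi_Q^{\ast}(l_{2n+1-k})$, apply the Riemann--Roch-type compatibility of Steenrod operations with push-forward twisted by $c(\mathcal{T})^{-1}$ together with (7.1) and the known action of $S_Q$ on the $l_j$, and note that the odd-dimensional case must be carried out in $\text{Ch}_K(\bar{X})$ via the cycle-module formalism of Remark 2.1. The only cosmetic difference is that no Vandermonde identity is needed --- the powers of $(1+h^1)$ simply cancel to $(1+h^1)^k$, whose expansion gives the coefficients $\binom{k}{i}$ directly.
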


\begin{proof}
By \cite[Corollary 78.5]{EKM}, one has $S_Q(l_{2n+1-k})=(1+h^1)^{2r+k}\cdot l_{2n+1-k}$.
It follows from (4.3), (7.1) and \cite[Theorem 61.9 and Proposition 61.10]{EKM} that one has
\[\begin{array}{rl}
S_{\bar{X}}(e_k) & =S_X\circ {\pi_X}_{\ast} \circ \pi_Q^{\ast}(l_{2n+1-k}) \\
 & ={\pi_X}_{\ast} \circ c(-\mathcal{T})\circ S_{\mathbb{P}(E)} \circ  \pi_Q^{\ast}(l_{2n+1-k}) \\
 & ={\pi_X}_{\ast}\left( \left(1+\pi_Q^{\ast}(h^1)\right)^{-2r} \cdot \pi_Q^{\ast}\circ S_Q(l_{2n+1-k}) \right) \\
 & = {\pi_X}_{\ast} \circ \pi_Q^{\ast} \left( (1+h^1)^{-2r} \cdot (1+h^1)^{2r+k} \cdot l_{2n+1-k} \right) \\
 & = {\pi_X}_{\ast} \circ \pi_Q^{\ast} \left( (1+h^1)^{k} \cdot l_{2n+1-k} \right) \\

  & = \sum_{i\geq 0} {k \choose i} {\pi_X}_{\ast} \circ \pi_Q^{\ast}(l_{2n+1-k-i}) \\
 & \\
 & = \sum_{i\geq 0} {k \choose i} e_{k+i}
\end{array}\]
in $\text{Ch}(\bar{X})$ if $\text{dim}(h)$ is even, in $\text{Ch}_K(\bar{X})$ otherwise.
\end{proof}

Note that Theorem 7.2 is also a direct consequence of the quadratic case \cite[Theorem 4.1]{Grass},
see Remark 4.12.

\section{Canonical dimension}

In this section, we compute the canonical $2$-dimension $\text{cdim}_2(X)$ of the maximal unitary grassmannian $X$ associated with a non-degenerate $K/F$-hermitian form $h$
in terms of the $J$-invariant $J(h)$.

\medskip

We recall the definition of the canonical $2$-dimension of a variety (see \cite{cd} for an introduction on canonical dimension and for a more geometric definition). 

Let $X$ be an $F$-variety. An isotropy field $L$ of $X$ is an extension $L/F$ such that $X(L)\neq \emptyset$ (note that if $X$ is a maximal unitary grassmannian then this is the same thing as a splitting field of the corresponding hermitian form $h$).

An isotropy field $E$ is called \textit{$2$-generic} if for any isotropy
field $L$ there is an $F$-place $E \rightharpoonup L'$ for some finite extension $L'/L$ of odd degree
(see \cite[\S103]{EKM} for an introduction to $F$-places). For example, the function field $F(X)$ is $2$-generic (because it is generic).

The \textit{canonical $2$-dimension} $\text{cdim}_2(X)$ of $X$ is the minimum of the transcendence
degree $\text{tr.deg}_F(E)$ over all $2$-generic field extensions $E/F$.
If $X$ is smooth then $\text{cdim}_2(X)\leq \text{dim}(X)$. One says that $X$
is \textit{$2$-incompressible} if $\text{cdim}_2(X)=\text{dim}(X)$.

\medskip

The proof of the theorem below is a very slight modification of the corresponding one for quadratic forms,
see \cite[Theorem 90.3]{EKM}. Namely, at some point, one just has to consider the Chow rings $\text{Ch}_K$.
Nevertheless, we write down this modified version.

\medskip

The two following facts about $F$-places are used in the proof (contained in \cite[\S103]{EKM}).

One can compose $F$-places. In particular, any $F$-place $E \rightharpoonup L$ can be restricted
to a subfield $E'$ of $E$ containing $F$ (since field extensions over $F$ are $F$-places).

For any proper $F$-variety $Y$ equipped with an $F$-place $\pi: F(Y) \rightharpoonup L$, there is
a morphism $\text{Spec}(L)\rightarrow Y$. 
%The image $\{y\}$ of this morphism is called the center of $\pi$.

\medskip

The following $\text{Ch}_K$-version of the result \cite[Proposition 58.18]{EKM} for classical Chow groups will also be needed (see Remark 2.1).

\begin{prop}
\textit{Let $Z$ be a smooth $F$-scheme and $Y$ a $Z$-scheme.
Suppose there is a flat morphism of $F$-schemes
$f: Y\rightarrow Y'$.
If for every $y'\in Y'$, the pull-back homomorphism}
$\text{Ch}_K(Z)\rightarrow \text{Ch}_K\left( Y\times_{Y'} \text{Spec}\left(F(y')\right)\right)$
\textit{is surjective then
the homomorphism}
\[\begin{array}{ccl}
 \text{Ch}_K(Y')\otimes \text{Ch}_K(Z) & \rightarrow & \text{Ch}_K(Y) \\
\alpha \otimes \beta  & \mapsto & (f)_K^{\ast}(\alpha)\cdot \beta
\end{array}\]
\textit{is surjective.}
\end{prop}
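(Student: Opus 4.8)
The plan is to reduce this $\text{Ch}_K$-statement to the corresponding statement for classical Chow groups, namely \cite[Proposition 58.18]{EKM}, by invoking the cycle-module formalism explained in Remark 2.1. The key observation is that all three functors $\text{Ch}_K(Z)$, $\text{Ch}_K(Y')$ and $\text{Ch}_K(Y)$ arise as the cohomology theory attached to the cycle module $E\mapsto K^M_\ast(E)/N_\ast(E)$ over $F$, exactly as $\text{CH}\ \mathrm{mod}\ 2$ arises from the Milnor cycle module $E\mapsto K^M_\ast(E)$. Since \cite[Proposition 58.18]{EKM} is proven purely within the cycle-module / cohomology-theory framework (pull-backs along flat maps, push-forwards, the compatibility of exterior products with the module structure, and the localization sequence), the same argument goes through verbatim once one replaces the Milnor cycle module by its quotient. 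So the real content is to check that the quotient cycle module satisfies the axioms that the proof of \cite[Proposition 58.18]{EKM} actually uses.

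First I would recall from Remark 2.1 that $E\mapsto K^M_\ast(E)/N_\ast(E)$ is a genuine cycle module over $F$: the norm subgroups $N_\ast$ are preserved by restriction and by norm (transfer) maps, and the residue maps descend to the quotient thanks to the rule \cite[R3b]{Rostchow}, so the boundary operators in the Gersten/localization complex are well defined. Consequently the associated cohomology theory has all the structure of a cycle cohomology theory: it is contravariant for flat (indeed for arbitrary, on smooth schemes) morphisms, covariant for proper morphisms, carries an exterior product, and admits the localization exact sequence. These are precisely the ingredients feeding into the proof of \cite[Proposition 58.18]{EKM}.

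Next I would transcribe the proof of \cite[Proposition 58.18]{EKM}, written for the Milnor module, into the present setting. One filters $Y'$ by its dimension, uses the flatness of $f$ to pull back the localization sequence on $Y'$ to one on $Y$, and reduces the surjectivity claim to the surjectivity of
\[
\text{Ch}_K(Z)\longrightarrow \text{Ch}_K\!\left(Y\times_{Y'}\text{Spec}\!\left(F(y')\right)\right)
\]
for each point $y'\in Y'$, which is exactly our hypothesis. The exterior-product map $\alpha\otimes\beta\mapsto (f)_K^\ast(\alpha)\cdot\beta$ is the cycle-module exterior product followed by the diagonal restriction, and its interaction with the boundary maps is governed by the Leibniz-type rules of the cycle module, which hold for the quotient module by Remark 2.1. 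A diagram chase / noetherian induction on $\dim Y'$ then yields the conclusion.

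The main obstacle, and the only point that is not purely formal, is verifying that every step of the original argument uses only operations that are compatible with passing to the quotient by the norm ideals $N_\ast$ --- in particular that the residue maps and the transfer maps descend correctly, so that the localization sequence and the exterior product remain exact respectively bilinear at the level of $\text{Ch}_K$. Once this compatibility is in place (which is the substance of Remark 2.1), there is no further difficulty, and I would not reproduce the diagram chase in detail but simply point to \cite[Proposition 58.18]{EKM} and \emph{loc.\ cit.}, noting that the argument applies word for word with $K^M_\ast$ replaced by $K^M_\ast/N_\ast$.
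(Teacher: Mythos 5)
Your proposal is correct and follows exactly the route the paper intends: the paper gives no separate proof of Proposition 8.1, but merely points to Remark 2.1, i.e.\ to the fact that $E\mapsto K^M_{\ast}(E)/N_{\ast}(E)$ is a cycle module over $F$, so that the argument of \cite[Proposition 58.18]{EKM} carries over verbatim to the associated cohomology theory. Your write-up simply makes explicit the compatibility checks that the paper leaves implicit.
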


\begin{thm}
\textit{Let $h$ be a non-degenerate $K/F$-hermitian form and $X$ the associated maximal unitary grassmannian.
Then }
\[\text{cdim}_2(X)=\text{dim}(X)-||J(h)||.\]

\end{thm}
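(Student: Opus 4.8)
The plan is to follow the model of \cite[Theorem 90.3]{EKM}, replacing classical Chow groups by the Chow $K$-rings $\text{Ch}_K$ wherever the norm ideal intervenes. I would prove the equality $\text{cdim}_2(X)=\text{dim}(X)-||J(h)||$ by establishing the two inequalities separately. For the upper bound $\text{cdim}_2(X)\leq \text{dim}(X)-||J(h)||$, the idea is to exhibit a $2$-generic field extension $E/F$ of the required transcendence degree. The natural candidate is the function field of a well-chosen subvariety (or a tower of hermitian quadric function fields) obtained by splitting off hyperbolic planes one at a time; by Corollary 6.4 each such splitting adds the top odd generators to $J(h)$, so after $n-|J(h)|+\varepsilon$ steps one reaches a splitting field, and the accumulated transcendence degree matches $\text{dim}(X)-||J(h)||$. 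Here $||J(h)||=\sum_{k\in J(h)}k$ records precisely the codimensions contributed by the rational generators, so the bookkeeping of dimensions reduces to comparing $\dim X$ with the codimension $||J(h)||$ of the top rational class from Proposition 6.2.

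For the lower bound $\text{cdim}_2(X)\geq \text{dim}(X)-||J(h)||$, I would argue by contradiction along the lines of \cite{EKM}: suppose $E/F$ is a $2$-generic isotropy field with $\text{tr.deg}_F(E)$ strictly smaller than $\text{dim}(X)-||J(h)||$, and derive that some generator $e_k$ with $k\notin J(h)$ would become rational, contradicting the definition of $J(h)$. The mechanism is to use the $2$-genericity to produce an $F$-place $F(X) \rightharpoonup L'$ for a finite odd-degree extension $L'/E$, which by the two facts about $F$-places recalled before the statement yields a morphism $\text{Spec}(L')\to X$ and hence a specialization map on $\text{Ch}_K$ compatible with rationality. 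The key technical input is Proposition 8.1, the $\text{Ch}_K$-version of \cite[Proposition 58.18]{EKM}, which controls how $\text{Ch}_K$ of a total space decomposes; combined with Theorem 7.2 (the Steenrod operation formula $S_{\bar X}^i(e_k)=\binom{k}{i}e_{k+i}$) it forces the set of rational generators to be "closed" under the relevant operations, pinning down the codimension of the top rational class to be exactly $||J(h)||$.

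The place where the Chow $K$-ring genuinely differs from the classical setting, and thus where I would be most careful, is in checking that the degree map and the specialization/pull-back homomorphisms are well defined modulo the norm ideal. This is exactly what Remark 2.1 licenses, via the cycle module $E\mapsto K^M_\ast(E)/N_\ast(E)$: one must verify that every homomorphism invoked in the \cite{EKM} argument preserves norm ideals, so that its $(\,\cdot\,)_K$ version exists. I would run through the \cite{EKM} proof keeping a running check that each cited result has an available $\text{Ch}_K$-analogue (Propositions 6.5 and 8.1, and Proposition 9.2 are the ones flagged in Remark 2.1), and that the odd-degree behaviour needed for $2$-genericity survives passage to $\text{Ch}_K$ because the degree map takes values in $\mathbb{Z}/2\mathbb{Z}$.

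I expect the main obstacle to be the lower bound, specifically verifying that a hypothetical small $2$-generic extension would make an extra generator rational. The subtlety is that one must transport rationality of cycles across an $F$-place $F(X)\rightharpoonup L'$ and back, using that $L'/E$ has odd degree so that the induced map on $\text{Ch}_K$ (which is $\mathbb{Z}/2\mathbb{Z}$-linear) is compatible with the degree pairing of Lemma 5.5; the odd-degree hypothesis is essential precisely because it becomes an isomorphism modulo $2$. Making this specialization argument rigorous in the $\text{Ch}_K$ framework -- rather than the classical one -- while ensuring the norm ideal is respected at each step, is the delicate heart of the proof, even though, as the authors note, it is ultimately only a "very slight modification" of \cite[Theorem 90.3]{EKM}.
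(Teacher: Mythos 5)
Your high-level plan --- run \cite[Theorem 90.3]{EKM} with $\text{Ch}_K$ in place of $\text{Ch}$ --- is the right one and is exactly what the paper does, but both halves of your argument miss the actual mechanism. For the upper bound, the candidate $2$-generic field is not obtained from a tower of hermitian quadric function fields with bookkeeping via Corollary 6.4: the transcendence degrees accumulated along such a tower are governed by the dimensions of the successive hermitian quadrics, not by $||J(h)||$, and the height only bounds $|J(h)|$ from below (Proposition 6.11), so the count does not close. What the paper uses instead is Proposition 6.1: the product $\prod_{k\in J(h)}e_k$ is a nonzero element of $\overline{\text{Ch}}_K(X)$ of codimension exactly $||J(h)||$, hence there is a closed subvariety $Y\subset X$ of dimension $\dim(X)-||J(h)||$ with $[Y]\neq 0$ in $\overline{\text{Ch}}(X)$; from that point the EKM argument applies verbatim.

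For the lower bound, the contradiction is not that ``some $e_k$ with $k\notin J(h)$ becomes rational,'' and Theorem 7.2 (the Steenrod operations) is not used anywhere in this proof. The actual argument runs as follows: let $E$ be a $2$-generic isotropy field of minimal transcendence degree and $Y$ the closure of $\text{Spec}(E)\to X$, so that $\text{tr.deg}_F(E)\geq\dim Y$. The $F$-place $E\rightharpoonup L$ with $L/F(X)$ of odd degree produces a closed subvariety $Z\subset Y\times X$ with $[F(Z):F(X)]$ odd, whose pushforward to $X^2$ restricts to a nonzero element of $\overline{\text{Ch}}_K(X^2)$. The key step you are missing is the surjectivity of
\[\text{Ch}_K(Y)\otimes\text{Ch}_K(X^2)\longrightarrow\text{Ch}_K(Y\times X),\]
which is where Proposition 8.1 enters, fed by the rational cycles $x_I$ of (5.4) and Theorem 4.9; this surjectivity, together with the nonvanishing of the class of $Z$, lets one conclude that the composition $\text{Ch}_K(Y)\to\text{Ch}_K(X)\to\text{Ch}_K(\bar X)$ is non-trivial. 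Since its image lands in $\overline{\text{Ch}}_K(X)$ in codimension at least $\dim(X)-\dim(Y)$, Proposition 6.1 gives $\dim Y\geq\dim(X)-||J(h)||$ directly --- no contradiction argument, no closure under Steenrod operations, and no transport of individual generators across the place are needed.
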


\begin{proof}

Let $E$ be a $2$-generic isotropy field of $X$ with minimum transcendence degree $\text{cdim}_2(X)$
and $Y$ be the closure of the $F$-morphism $\text{Spec}(E)\rightarrow X$.
Since $F(Y)$ is a subfield of $E$, one has 
\begin{equation} \text{tr.deg}_F(E)\geq \text{dim}(Y). \end{equation}

Moreover, since $E$ is $2$-generic, there is an $F$-place
$E \rightharpoonup L$, with $L$ a field extension of $F(X)$ of odd degree.
%Here one uses the two aforementioned facts on $F$-places to obtain the existence of
Restricting $E \rightharpoonup L$ to $F(Y)$, one get an $F$-place $\pi: F(Y)\rightharpoonup L$, thus the existence of
a morphism $f: \text{Spec}(L)\rightarrow Y$. 
Let $g: \text{Spec}(L)\rightarrow X$ be the morphism induced by the extension $L/F(X)$
and let $Z$ be
the closure of the image of $(f,g):\text{Spec}(L)\rightarrow Y\times X$.
Then $[F(Z): F(X)]$ is odd  since it divides $[L: F(X)]$.
Thus, the image of $[Z]$ under the composition
\[\xymatrix{
\text{Ch}(Y\times X) \ar[r]^{\;\;\;\;(i\times 1)_{\ast}} &\text{Ch}(X^2)\ar[r]^{\;\;\;{p_2}_{\ast}} & \text{Ch}(X) },\]
where $i:Y\rightarrow X$ is the closed embedding and $p_2$ is the second projection, is equal to $[X]$.
It follows that $(i\times 1)_{\ast}([Z])_{F(X)}\neq 0$ in 
$\overline{\text{Ch}}_K(X^2)\subset \text{Ch}_K(\bar{X}^2)$.

We claim that the homomorphism
\begin{equation}\begin{array}{ccl}
 \text{Ch}_K(Y)\otimes \text{Ch}_K(X^2) & \rightarrow & \text{Ch}_K(Y\times X) \\
\alpha \otimes \beta  & \mapsto & (p_Y)^{\ast}_K(\alpha)\cdot (i\times 1)^{\ast}_K(\beta)
\end{array},\end{equation}
where $p_Y$ is the projection $Y\times X \rightarrow Y$, is surjective.
%Observing the proof of \cite[Proposition 58.18]{EKM}, 
%one sees that the same statement remains valid for Chow rings $\text{Ch}_K$.
By Proposition 8.1, it suffices to show that for any $y\in Y$ the homomorphism 
$\text{Ch}_K(X^2)\rightarrow \text{Ch}_K(X_{F(y)})$ associated with the pull-back of the induced morphism 
$\text{Spec} (F(y))\times X\rightarrow X^{2}$ (where the second factor is the identity)
is surjective to prove the claim. 
The pull-back homomorphism $\text{Ch}(X^2)\rightarrow \text{Ch}(X_{F(y)})$ sends an 
element in the fiber
of the rational cycle $x_I\in \text{Ch}(\bar{X}^2)$ (introduced in (5.4)) under restriction to $e_I\in \text{Ch}(X_{F(y)})\simeq \text{Ch}(\bar{X})$. Therefore, since the classes $e_I$
generate $\text{Ch}_K(\bar{X})$ (Theorem 4.9), one deduces that the composition
\[\xymatrix{
\text{Ch}(X^2) \ar[r]&\text{Ch}(X_{F(y)})\ar[r] & \text{Ch}_K(X_{F(y)}) }\]
is surjective. Therefore the homomorphism $\text{Ch}_K(X^2)\rightarrow \text{Ch}_K(X_{F(y)})$ is also surjective and the claim is proven.

As a consequence of \cite[Proposition 49.20 and 58.17]{EKM}, one get that the diagram
\[\xymatrix{
 \text{Ch}_K(Y)\otimes \text{Ch}_K(X^2) \ar[r] \ar[d]_{{i_{\ast}}_K \otimes 1} &  \text{Ch}_K(Y\times X)  \ar[d]^{{(i\times 1)_{\ast}}_K} \\
 \text{Ch}_K(X)\otimes \text{Ch}_K(X^2) \ar[r] \ar[d] &  \text{Ch}_K(X^2)  \ar[d] \\
 \text{Ch}_K(\overline{X})\otimes \text{Ch}_K(X^2) \ar[r] &  \text{Ch}_K(\bar{X}^2)  
},\]
where the horizontal arrows are defined as in (8.4), is commutative.
Using the fact that 
$(i\times 1)_{\ast}([Z])_{F(X)}\neq 0$ in $\overline{\text{Ch}}_K(X^2)$
and the claim, one obtains that the composition
\[\xymatrix{
\text{Ch}_K(Y) \ar[r] &\text{Ch}_K(X) \ar[r] & \text{Ch}_K(\bar{X}) }\]
is non-trivial.
Therefore, by Proposition 6.1 one has $\text{dim}(Y)\geq \text{dim}(X)-||J(h)||$.
Thus, combining with inequality (8.3), one get
\[\text{cdim}_2(X)\geq \text{dim}(X)-||J(h)||.\]

By Proposition 6.1, there is a closed subvariety $Y\subset X$ of dimension
 $\text{dim}(X)-||J(h)||$ such that $[Y]\neq 0$ in $\overline{\text{Ch}}_K(X)$. In particular, one has 
$[Y]\neq 0$ in $\overline{\text{Ch}}(X)$.
From this moment on, the remaining of the proof of the second inequality is strictly the same as the one in the orthogonal case, see \cite[Theorem 90.3]{EKM}.
\end{proof}

We recall that if $\text{dim}(h)=2n+1$ then $\text{dim}(X)=n(n+2)=3+5+\cdots+(2n-1)+(2n+1)$ and
if $\text{dim}(h)=2n+2$ then $\text{dim}(X)=(n+1)^2=1+3+5+\cdots+(2n-1)+(2n+1)$.

\begin{cor} 
\textit{The variety $X$ is $2$-incompressible if and only if $J(h)$ is empty.}
\end{cor}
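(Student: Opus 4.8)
The plan is to read off the equivalence directly from the formula for the canonical $2$-dimension established in Theorem 8.2, so that no new geometric input is needed. By the definition recalled in Section 8, the variety $X$ is $2$-incompressible precisely when $\text{cdim}_2(X)=\text{dim}(X)$. Substituting the identity $\text{cdim}_2(X)=\text{dim}(X)-||J(h)||$ furnished by Theorem 8.2, this condition becomes simply $||J(h)||=0$. Thus the whole content of the corollary is reduced to the assertion that $||J(h)||=0$ holds if and only if $J(h)=\emptyset$.

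To settle that arithmetic point, I would recall how $J(h)$ is defined in Section 6: it is a subset of $[1,\;2n+1]_{\text{od}}$, with the element $1$ excluded when $\text{dim}(h)$ is odd. In particular every element of $J(h)$ is a strictly positive (odd) integer. Since $||J(h)||$ is by definition the sum of all $k\in J(h)$, it is a sum of strictly positive integers, whence $||J(h)||=0$ forces the index set to have no summands at all, i.e.\ $J(h)=\emptyset$; conversely $||\emptyset||=0$ trivially.

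Combining the two observations yields the chain of equivalences: $X$ is $2$-incompressible $\Leftrightarrow \text{cdim}_2(X)=\text{dim}(X) \Leftrightarrow ||J(h)||=0 \Leftrightarrow J(h)=\emptyset$, which is exactly the statement to be proved.

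I do not expect any genuine obstacle here, as the corollary is an immediate consequence of Theorem 8.2. The only point deserving a moment's care is the positivity remark in the second paragraph: it is what guarantees that the vanishing of the sum $||J(h)||$ forces the set $J(h)$ to be empty, rather than merely permitting some cancellation among the entries — cancellation that cannot occur precisely because all summands are strictly positive.
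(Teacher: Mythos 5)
Your argument is correct and is exactly the one the paper intends: the corollary is stated as an immediate consequence of Theorem 8.2, with no further proof given, and your reduction ($2$-incompressibility $\Leftrightarrow ||J(h)||=0 \Leftrightarrow J(h)=\emptyset$, the last step because all elements of $J(h)$ are strictly positive odd integers) is the implicit reasoning. Nothing is missing.
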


\begin{rem}
We call the \textit{generic hermitian form} of dimension $d$
%In the case where $h$ is a \textit{generic} hermitian form
(for the fixed separable extension $K/F$)
%meaning that $h$ is 
the diagonal $K(t_1,\dots,t_d)/F(t_1,\dots,t_d)$-hermitian form $<t_1,\dots,t_d>$, 
where 
%$d=\text{dim}(h)$ and 
$t_1,\dots,t_d$ are variables. 
Since any hermitian form can be diagonalized (see \cite[Theorem 6.3 of Chapter 7]{Sch}),
%Generic hermitian forms are important notably because 
any $d$-dimensional
$K/F$-hermitian form is a specialization of the generic hermitian form of dimension $d$.
In the case of a generic hermitian form, 
%meaning that $h$ is the diagonal $K(t_1,\dots,t_d)/F(t_1,\dots,t_d)$-hermitian form $<t_1,\dots,t_d>$, 
%where $d=\text{dim}(h)$ and $t_1,\dots,t_d$ are variables, 
the maximal unitary grassmannian is $2$-incompressible by \cite[Theorem 8.1]{UG}.
Therefore, by Corollary 8.5, the $J$-invariant of a generic hermitian form is empty.
\end{rem}

\section{Motivic decomposition}

In this section, we determine the complete motivic decomposition of the $\text{Ch}_K$-motive $M^K(X)$
(see \S 2) of the maximal unitary grassmannian $X$ associated with a non-degenerate $K/F$-hermitian form $h$
in terms of the $J$-invariant $J(h)$ (Theorem 9.4). 
%The category of $\text{Ch}_K$-motives is defined the same way as the category of Chow motives (see %\cite[Chapter XII]{EKM}) but using the Chow rings $\text{Ch}_K$ instead of the usual Chow rings %$\text{CH}$.

\medskip

For $I \subset [1,\; 2n+1]_{od}$ or $[3,\; 2n+1]_{od}$ depending on whether $\text{dim}(h)$ is respectively equal to $2n+2$ or $2n+1$, let $\bar{I}$ denote the complementary set.
We set $J=J(h)$.
One always has $||J||+||\bar{J}||=\text{dim}(X)$.
We also use notation introduced in the previous sections.
By the very definition of the $J$-invariant and Proposition 5.1, for any $S \subset \bar{J}$ and $L,L' \subset J$, the cycle 
\[\theta_{S,L,L'}:=x_S\cdot (e_L\times e_{J\backslash L'}), \]
where $x_S$ is defined in (5.4), belongs to $\overline{\text{Ch}}_K(X^2)$.
Note that $\theta_{S,L,L'}$ can be rewritten as
\[\sum_{M\subset S}e_{M\sqcup L}\times e_{(S\backslash M) \sqcup (J\backslash L')},\]
where $\sqcup$ is the disjoint union of sets.  

We write $\theta_L$ for $\theta_{\bar{J},L,L}$ and $\theta_{L,L'}$ for $\theta_{\bar{J},L,L'}$. Let $\Delta_{\bar{X}}$ denote the class of the diagonal in $\text{Ch}_K(\bar{X}^2)$.

\begin{lemme}
\begin{enumerate}[(i)]
\item \textit{The set 
$\mathcal{B}=\{\theta_{S,L,L'} \;| \; S \subset \bar{J}\;; L,L' \subset J \}$ is a 
$\mathbb{Z}/2\mathbb{Z}$-basis of}
$\overline{\text{Ch}}_K(X^2).$
%\textit{. In particular, the set $\{\theta_L\;|\;L\subset J\}$ is a $\mathbb{Z}/2\mathbb{Z}$-basis of} %$\overline{\text{Ch}}^{\text{dim}(X)}_K(X^2)$.
\item \textit{One has}
\[\theta_{S_2,L_2,L'_2}\circ \theta_{S_1,L_1,L_1'}=\delta_{L_2,L_1'}\cdot \delta_{S_1\cup S_2, \bar{J}}\cdot \theta_{S_1\cap S_2,L_1,L_2'}.\]
\item \textit{As correspondences of degree $0$, the elements of $\{\theta_L\;|\;L\subset J\}$
are pairwise orthogonal projectors such that $\sum_{L\subset J} \theta_L=\Delta_{\bar{X}}$.}
\end{enumerate}
\end{lemme}

\begin{proof}

The set $\mathcal{B}$ is a free family. Indeed, assume that $\sum_{S,L,L'}\alpha_{S,L,L'}\cdot \theta_{S,L,L'}=0$ for some $\alpha_{S,L,L'}$ in $\mathbb{Z}/2\mathbb{Z}$. 
Choose $L_0\subset J$.
By multiplying the latter equation by $e_{\bar{J} \sqcup (J\backslash L_0)}\times 1$, one get
\[\sum_{S,L,L'}\alpha_{S,L,L'}\cdot \left(\sum_{M\subset S}
e_{\bar{J} \sqcup (J\backslash L_0)}\cdot  e_{M\sqcup L}
\times e_{(S\backslash M) \sqcup (J\backslash L')}\right)=0.\]
By using (5.6), taking the image of the previous equation under the homomorphism
${({p_2}_{\ast})}_K$ associated with the push-forward of the second projection $\bar{X}\times \bar{X}\rightarrow \bar{X}$, one get 
\[\sum_{S, L'}\alpha_{S,L_0,L'} \cdot e_{S \sqcup (J\backslash L')}=0.\]
Hence, since the family $\{e_I\}$ is free (it is even a basis of $\text{Ch}_K(\bar{X})$, see 
Theorem 4.9), one obtains that $\alpha_{S,L_0,L'}=0$
for any $S\subset \bar{J}$ and any $L'\subset J$. Therefore, the family $\mathcal{B}$ is free. 

Moreover, since the $\text{Ch}_K$-motive $M^K(\bar{X})$ is split, by the analogue of \cite[Proposition 64.3]{EKM}
for $\text{Ch}_K$-motives, the map 
\[\text{Ch}_K(\bar{X})\otimes \text{Ch}_K(\bar{X})\rightarrow \text{Ch}_K(\bar{X}\times \bar{X})\]
(given by the external product) is an isomorphism.
Thus, by Theorem 4.9, any element of $\text{Ch}_K(\bar{X}\times \bar{X})$ can be written as 
$\sum_{I_1,I_2}\alpha_{I_1,I_2}\cdot (e_{I_1}\times e_{I_2})$ where the sum runs over 
all subsets $I_1,\, I_2\subset [1,\; 2n+1]_{od}$ or $[3,\; 2n+1]_{od}$ depending on whether $\text{dim}(h)$ is respectively equal to $2n+2$ or $2n+1$, and $\alpha_{I_1,I_2}\in \mathbb{Z}/2\mathbb{Z}$.
Let $a=\sum_{I_1,I_2}\alpha_{I_1,I_2}\cdot (e_{I_1}\times e_{I_2})\in \overline{\text{Ch}}_K(X^2)$.
Then, for any subset $I_2$, the element
\[{p_{1}}_{\ast}\left(x_{\bar{I_2}}\cdot a \right)=\sum\limits_{\substack{I_1, M\\ M\subset \bar{I_1}\cap \bar{I_2}}}\alpha_{I_1,I_2 \sqcup M}\cdot e_{I_1 \sqcup M}\]
is rational. Hence, by Theorem 5.7, for any subset $I\not\subset J$, one has
\[\sum_{ M\subset \bar{I_2}\cap I}\alpha_{I \backslash M,I_2 \sqcup M}=0.\]
Furthermore, the above relations are linearly independant because,
for any subset $I_2$ and $I\not\subset J$, the sum 
$r_{I_2,I}:=\sum_{ M\subset \bar{I_2}\cap I}\alpha_{I \backslash M,I_2 \sqcup M}$ contains $\alpha_{I,I_2}$ and for any $(I_2',I')\neq (I_2,I)$ such that $\alpha_{I,I_2}\in r_{I_2',I'}$,
one has $|I_2'|<|I_2|$.
%Moreover, since the $\text{Ch}_K$-motive $M^K(\bar{X})$ is split, by the analogue of \cite[Proposition %64.3]{EKM}
%for $\text{Ch}_K$-motives, the horizontal arrows (given by the external product) in the following 
%commutative diagram
%\[\xymatrix{
 %\text{Ch}_K(X)\otimes \text{Ch}_K(\bar{X}) \ar[r] \ar[d] &  \text{Ch}_K(X\times \bar{X})  \ar[d] \\
 %\text{Ch}_K(\bar{X})\otimes \text{Ch}_K(\bar{X}) \ar[r]  & 
 %\text{Ch}_K(\bar{X}\times \bar{X})  
%},\]
%are isomorphisms.
%Consequently, since $\overline{\text{Ch}}_K(X^2)$ is included in the image of
%the vertical right arrow, which is isomorphic to $\overline{\text{Ch}}_K(X)\otimes \text{Ch}_K(\bar{X})$, %one get
%\[\text{rank}_{\mathbb{Z}/2\mathbb{Z}}\,\overline{\text{Ch}}_K(X^2)\leq
%\text{rank}_{\mathbb{Z}/2\mathbb{Z}}\,\overline{\text{Ch}}_K(X)\cdot 
%\text{rank}_{\mathbb{Z}/2\mathbb{Z}}\,\text{Ch}_K(\bar{X}).\]
%Furthermore, we know from Theorems 4.9, 5.7 and the definition of the $J$-invariant that
%$\text{rank}_{\mathbb{Z}/2\mathbb{Z}}\,\overline{\text{Ch}}_K(X)=2^{|J|}$. 
It follows that
\[\text{rank}_{\mathbb{Z}/2\mathbb{Z}}\,\overline{\text{Ch}}_K(X^2)\leq 2^{r}\cdot 2^r - 2^r\cdot (2^r-2^{|J|})
=2^{r+|J|}=|\mathcal{B}|\]
(recall that $\text{rank}_{\mathbb{Z}/2\mathbb{Z}}\,\text{Ch}_K(\bar{X})=2^r$, with  $r=\lfloor \text{dim}(h)/2 \rfloor$).
Consequently, the family $\mathcal{B}$ is a 
$\mathbb{Z}/2\mathbb{Z}$-basis of
$\overline{\text{Ch}}_K(X^2)$.

%The second assertion of conclusion (i) comes then from the observation that the cycle $\theta_{S,L,L'}$
%has codimension $\text{dim}(X)$ if and only if $S=\bar{J}$ and $L'=L$.

For any subsets $S_1, S_2\subset \bar{J}$ and $L_1,L_1',L_2,L_2'\subset J$, the composition of correspondences $\theta_{S_2,L_2,L'_2}\circ \theta_{S_1,L_1,L_1'}$ is equal to
\[\sum_{M_1,M_2\subset \bar{J}}({p_{13}}_{\ast})_K\left(
e_{M_1\sqcup L_1}\times (e_{\left((S_1\backslash M_1)\sqcup (J\backslash L_1')\right)} \cdot e_{M_2\sqcup L_2})\times e_{(S_2\backslash M_2)\sqcup (J\backslash L_2')}\right). \]
Hence, the assertion (ii) follows from (5.6).
%Again by (5.6), it follows that
%\[\theta_{L'}
%\circ \theta_{L}=\left \{\begin{array}{ll}  \theta_{L} & \;\;\text{if}\;\; L'=L\; ; \\
 %0\;\;\text{mod}\;2 &\;\; \text{otherwise}. \end{array}\right.\]

By (ii), the elements of $\{\theta_L\;|\;L\subset J\}$
are pairwise orthogonal projectors.
Furthermore, one has the following identity
\[\sum_{L\subset J} \theta_L= \sum_{I \subset [1,\; 2n+1]_{od}} e_I\times e_{ \bar{I} }. \]
%(one needs to replace  $[1,\; 2n+1]_{od}$ by $[3,\; 2n+1]_{od}$ for odd-dimensional $h$).
Therefore, for any $I \subset [1,\; 2n+1]_{od}$ (or $[3,\; 2n+1]_{od}$ for odd-dimensional $h$), one has 
$\left(\sum_{L\subset J} \theta_L\right)^{\ast}(e_I)=e_I$. Since the elements $e_I$ generate 
the ring $\text{Ch}_K(\bar{X})$, one deduces that $\sum_{L\subset J} \theta_L=\Delta_{\bar{X}}$ and (iii) is proven.
\end{proof}

The next proposition is the $\text{Ch}_K$-version of the Rost Lemma (see \cite[Proposition 1]{RPF} or \cite[Theorem 67.1]{EKM}
and Remark 2.1).

\begin{prop}
\textit{Let $Y$ and $Z$ be smooth proper $F$-varieties.
If a correspondence} $\alpha \in \text{Ch}_K(Y\times Y)$ \textit{is such that} $\alpha \circ \text{Ch}_K(Y_{F(z)})=0$
\textit{for every $z\in Z$ then} 
\[\alpha^{\text{dim}(Z)+1} \circ \text{Ch}_K(Z\times Y)=0.\]
\end{prop}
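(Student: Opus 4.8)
The plan is to transport the proof of \cite[Theorem 67.1]{EKM} to the cohomology theory attached to the cycle module $E\mapsto K^M_\ast(E)/N_\ast(E)$ of Remark 2.1, using freely the localization exact sequences, the flat pull-back and proper push-forward, and the projection formula for $\text{Ch}_K$, all available for a cycle module. For a closed subvariety $W\subseteq Z$ with embedding $i\colon W\hookrightarrow Z$, I will say that a class $\beta\in\text{Ch}_K(Z\times Y)$ is \emph{$Z$-supported on $W$} if it lies in the image of $(i\times\text{id}_Y)_\ast\colon\text{Ch}_K(W\times Y)\to\text{Ch}_K(Z\times Y)$, and I call the least dimension of such a $W$ the \emph{support dimension} of $\beta$. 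The whole argument rests on a single dimension-drop lemma: composing on the left with $\alpha$ strictly decreases the support dimension. Granting this, any $\beta\in\text{Ch}_K(Z\times Y)$ is $Z$-supported on $W=Z$, hence has support dimension at most $\text{dim}(Z)$, so after $\text{dim}(Z)+1$ compositions its support dimension is negative, i.e. it is $Z$-supported on $\emptyset$ and therefore zero. This gives $\alpha^{\text{dim}(Z)+1}\circ\text{Ch}_K(Z\times Y)=0$.

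To prove the dimension-drop lemma, let $\beta$ be $Z$-supported on a closed $W$ with $\text{dim}(W)\leq e$, and write $\beta=(i\times\text{id}_Y)_\ast(\beta')$ with $\beta'\in\text{Ch}_K(W\times Y)$. First I would check that composition with $\alpha$ commutes with the push-forward along $i\times\text{id}_Y$, namely $\alpha\circ(i\times\text{id}_Y)_\ast(\beta')=(i\times\text{id}_Y)_\ast(\alpha\circ\beta')$, where on the right $\alpha\circ\beta'$ is computed as a correspondence $W\leadsto Y$. This follows from the definition of composition in \cite[\S 62]{EKM} by applying base change along the Cartesian square relating $W\times Y\times Y\to W\times Y$ to $Z\times Y\times Y\to Z\times Y$, together with the projection formula, so that $(i\times\text{id}_Y)_\ast$ may be pulled past the triple-product operations defining $\circ$. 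In particular $\alpha\circ\beta$ is again $Z$-supported on $W$. Next I would restrict to the generic points of $W$: each such point $\eta$ is a point of $Z$, so the hypothesis yields $\alpha_{F(\eta)}\circ\text{Ch}_K(Y_{F(\eta)})=0$; since restriction to $\eta$ is a flat base change in the source variable and is compatible with composition, one gets $(\alpha\circ\beta')_\eta=\alpha_{F(\eta)}\circ(\beta')_\eta=0$ in $\text{Ch}_K(Y_{F(\eta)})$ at every generic point $\eta$ of $W$.

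By continuity of $\text{Ch}_K$, vanishing at all the generic points of $W$ forces $\alpha\circ\beta'$ to vanish on some dense open $V\subseteq W$; setting $W'=W\setminus V$ one has $\text{dim}(W')\leq e-1$, and the localization sequence
\[\text{Ch}_K(W'\times Y)\longrightarrow\text{Ch}_K(W\times Y)\longrightarrow\text{Ch}_K(V\times Y)\longrightarrow 0\]
shows that $\alpha\circ\beta'$ is supported on $W'\times Y$. Pushing forward by $i\times\text{id}_Y$, the class $\alpha\circ\beta$ is $Z$-supported on $W'$, of dimension $<e$, which proves the lemma. Iterating is then immediate: at every stage the new support $W'$ is a closed subvariety of $Z$, so the hypothesis remains available at all of its points and the lemma may be reapplied $\text{dim}(Z)+1$ times.

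The main obstacle will be the commutation $\alpha\circ(i\times\text{id}_Y)_\ast=(i\times\text{id}_Y)_\ast\circ(\alpha\circ-)$ and the compatibility of composition with restriction to a (non-closed) point, i.e. verifying that the base-change and projection formulas used to move $(i\times\text{id}_Y)_\ast$ past the operations defining $\circ$ remain valid in the $\text{Ch}_K$ setting. Everything here uses only the localization sequence, flat pull-back and proper push-forward, which hold for the cycle module of Remark 2.1; crucially, no smoothness of the intermediate supports $W$ and $W'$ is needed, so their possible singularity causes no difficulty.
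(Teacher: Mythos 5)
Your proof is correct and is essentially the paper's own argument: the paper gives no written proof of Proposition 9.2, but justifies it by Remark 2.1, namely that $\text{Ch}_K$ is the degree-zero part of the cohomology theory of the quotient cycle module $E\mapsto K^M_\ast(E)/N_\ast(E)$, so the proof of \cite[Theorem 67.1]{EKM} (the support-dimension induction you reproduce) carries over verbatim. Your spelled-out verification of the localization sequence, projection formula and continuity in the $\text{Ch}_K$ setting is exactly what that remark is implicitly invoking.
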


For any smooth proper variety $Y$, by the very definition of the category of $\text{Ch}_K$-motives, 
one has
$\text{End}^j\left(M^K(Y)\right)=\text{Ch}_K^{\text{dim}(Y)+j}(Y\times Y)$ for any $j$,
with the composition of endomorphisms given by the composition of correspondences.
%Observing the proof of \cite[Theorem 67.1]{EKM}, one sees that it remains valid at the level 
%of Chow rings $\text{Ch}_K$. 
If $h$ is a non-degenerate $K/F$-hermitian form and $x$ is a point of the associated maximal unitary grassmannian $X$
then $h$ splits over $F(x)$,
%for any $x\in X$
so $\text{Ch}_K(X_{F(x)})=\text{Ch}_K(\bar{X})$.
Hence, Proposition 9.2 (applied with $Y=Z=X$) implies the statement below, which says that Rost Nilpotence holds for $X$ at the level of $K$-Chow rings. 

\begin{cor}
\textit{Let $X$ be the maximal unitary grassmannian associated with a non-degenerate $K/F$-hermitian form. The kernel of the restriction ring homomorphism}
\[\text{End}^{\ast}\left(M^K(X)\right)\rightarrow \text{End}^{\ast}\left(M^K(\bar{X})\right)\]
\textit{consists of nilpotent elements.}
% i.e. the Rost Nilpotence holds for $X$ at the level of Chow rings $\text{Ch}_K$. 
\end{cor}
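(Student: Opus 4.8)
The plan is to deduce the corollary directly from Proposition 9.2 applied with $Y=Z=X$, as indicated in the sentence preceding the statement. First I would fix an arbitrary element $\alpha$ in the kernel of the restriction homomorphism $\text{End}^{\ast}\left(M^K(X)\right)\rightarrow \text{End}^{\ast}\left(M^K(\bar{X})\right)$ and use the identification $\text{End}^{\ast}\left(M^K(X)\right)=\text{Ch}_K^{\text{dim}(X)+\ast}(X\times X)$ recalled above. Under this identification $\alpha$ is simply a correspondence $X\leadsto X$, and membership in the kernel means precisely that its restriction to $F(X)$ vanishes, i.e. $\alpha_{F(X)}=0$ in $\text{Ch}_K(\bar{X}^2)$. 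The goal is then to show $\alpha$ is nilpotent in the composition ring $\text{Ch}_K(X\times X)$.

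The key step is to verify the hypothesis of Proposition 9.2, namely that $\alpha\circ \text{Ch}_K(X_{F(x)})=0$ for every point $x\in X$. Since $F(x)$ is a splitting field of $h$ which does not split $K$, one has the identification $\text{Ch}_K(X_{F(x)})=\text{Ch}_K(\bar{X})$, and likewise $\text{Ch}_K(X_{F(x)}^2)=\text{Ch}_K(\bar{X}^2)$, compatibly with the restriction maps from $\text{Ch}_K(X)$ and $\text{Ch}_K(X^2)$. Consequently $\alpha_{F(x)}$ is carried to $\alpha_{F(X)}=0$, whence $\alpha_{F(x)}=0$ in $\text{Ch}_K(X_{F(x)}^2)$; a fortiori $\alpha_{F(x)}$ composes to zero with every cycle in $\text{Ch}_K(X_{F(x)})$, which is exactly the required hypothesis.

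Applying Proposition 9.2 with $Y=Z=X$ then yields $\alpha^{\text{dim}(X)+1}\circ \text{Ch}_K(X\times X)=0$. Since the class of the diagonal $\Delta_{X^2}$ lies in $\text{Ch}_K(X\times X)$ and acts as the identity for the composition of correspondences, composing with $\Delta_{X^2}$ gives $\alpha^{\text{dim}(X)+1}=\alpha^{\text{dim}(X)+1}\circ \Delta_{X^2}=0$, so that $\alpha$ is nilpotent. As $\alpha$ was an arbitrary element of the kernel, the kernel consists of nilpotent elements, which is the assertion.

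The only point requiring genuine care, and hence the main obstacle, is the hypothesis verification in the second step: one must know that $\alpha$ restricts to zero not merely over the generic splitting field $F(X)$ but over the residue field $F(x)$ of \emph{every} point $x\in X$. This rests on the splitting-field-independence of $\text{Ch}_K$ of the split maximal unitary grassmannian and of its square, alluded to in the introduction; to make it precise one may pass to a common field extension of $F(x)$ and $F(X)$ over which the restriction maps from both are isomorphisms, the generators $e_k$ being defined over any splitting field of $h$. Once this identification is secured, the remaining steps are formal applications of Proposition 9.2 and of the fact that $\Delta_{X^2}$ is the composition identity.
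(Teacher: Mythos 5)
Your proof is correct and follows exactly the route the paper intends: identify $\text{End}^{\ast}(M^K(X))$ with $\text{Ch}_K^{\dim(X)+\ast}(X\times X)$, use that every residue field $F(x)$ of a point of $X$ splits $h$ so that $\text{Ch}_K(X_{F(x)})=\text{Ch}_K(\bar X)$ compatibly with restriction, and apply Proposition 9.2 with $Y=Z=X$, concluding via the diagonal. The paper's own proof is just the two sentences preceding the corollary, and your write-up supplies the same argument with the details (including the splitting-field-independence point) made explicit.
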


We are now able to prove the following $\text{Ch}_K$-motivic decomposition (in the spirit of \cite[Theorem 5.13]{J-inv}).

\begin{thm}
\textit{Let $h$ be a non-degenerate $K/F$-hermitian form and $X$ the associated maximal unitary grassmannian.
Then the} $\text{Ch}_K$\textit{-motive of $X$ decomposes as }
\[M^K(X)\simeq \bigoplus_{L\subset J(h)} \mathcal{R}(h)\{||L||\},\]
\textit{where  $\mathcal{R}(h)$ is an indecomposable motive.}

\textit{Moreover, over a splitting field of $h$, the} $\text{Ch}_K$\textit{-motive $\mathcal{R}(h)$ decomposes as a sum of shifts of the Tate motive. More precisely, one has}
\[\overline{\mathcal{R}(h)}\simeq \bigoplus_{M\subset \overline{J(h)}} \mathbb{Z}/2\mathbb{Z} \{||M||\}. \]
\end{thm}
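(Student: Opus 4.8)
The plan is to use the pairwise-orthogonal projectors $\theta_L$ constructed in Lemma 9.1(ii), together with Rost Nilpotence (Corollary 9.3), to lift the motivic decomposition from the split situation to the ground field. First I would observe that, since $\{\theta_L \mid L\subset J\}$ are rational idempotents summing to $\Delta_{\bar X^2}$, each $\theta_L$ defines a summand of the $\text{Ch}_K$-motive $M^K(\bar X)$ over a splitting field. The issue is that the $\theta_L$ a priori only live in $\overline{\text{Ch}}_K(X^2)$, i.e.\ they are rational but need not be genuine projectors over $F$. This is exactly where Corollary 9.3 enters: I would lift each rational idempotent $\theta_L$ to an actual idempotent $\pi_L \in \text{End}^0(M^K(X))$ whose image in $\text{End}^0(M^K(\bar X))$ is $\theta_L$, using the standard argument that an idempotent modulo a nil ideal can be lifted to an idempotent (see e.g.\ \cite[Corollary 92.5]{EKM} or the analogous lifting in \cite[\S 5]{J-inv}). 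A small extra point is that the $\pi_L$ can be chosen pairwise orthogonal with $\sum_L \pi_L = \Delta$, again by the usual orthogonalization trick applied in the nil-ideal setting.

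The next step is to identify the summands $\pi_L M^K(X)$. Since $\theta_L = x_{\bar J}\cdot(e_L\times e_{J\backslash L})$ is the shift by $||L||$ of $\theta_\emptyset$ at the level of rational cycles, I would check that the correspondences $x_{\bar J}\cdot(e_L\times 1)$ and $x_{\bar J}\cdot(1\times e_{J\backslash\emptyset}\,\cdots)$ realize mutually inverse isomorphisms between $\pi_L M^K(X)$ and $\pi_\emptyset M^K(X)(||L||)$ over a splitting field; by Rost Nilpotence an isomorphism of split motives that is given by rational correspondences lifts to an isomorphism over $F$. Setting $\mathcal R(h):=\pi_\emptyset M^K(X)$ then yields the asserted decomposition
\[M^K(X)\simeq \bigoplus_{L\subset J(h)} \mathcal R(h)(||L||).\]
The indecomposability of $\mathcal R(h)$ follows because any further rational idempotent splitting $\mathcal R(h)$ would, after restriction, decompose $\theta_\emptyset$, and one checks from Lemma 9.1(i) that the rational endomorphism ring of $\mathcal R(h)$ has no nontrivial idempotents: the only rational cycles of codimension $\dim(\mathcal R(h))$ in the relevant summand are spanned by the single projector, so $\text{End}(\mathcal R(h))$ modulo its nil radical is just $\mathbb Z/2\mathbb Z$.

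For the second assertion, over a splitting field $\bar F$ of $h$ the variety $X$ has a rational point, so by Theorem 4.9 the ring $\text{Ch}_K(\bar X)$ is the free $\mathbb Z/2\mathbb Z$-module on the monomials $e_{I_{od}}$, and $\overline{\mathcal R(h)}=\theta_\emptyset M^K(\bar X)$. I would compute $\theta_\emptyset^{*}(e_I)$ using the same degree formula (5.6) and Lemma 5.5 that underlie Lemma 9.1: the projector $\theta_\emptyset=x_{\bar J}\cdot(1\times e_J)$ picks out precisely the classes $e_M$ with $M\subset\bar J$, each contributing one Tate motive placed in codimension $||M||$. This gives
\[\overline{\mathcal R(h)}\simeq \bigoplus_{M\subset\overline{J(h)}} \mathbb Z/2\mathbb Z(||M||),\]
and a rank count confirms consistency: $\sum_{L\subset J}2^{|\bar J|}=2^{|J|}\cdot 2^{|\bar J|}=2^{r}=\text{rank}\,\text{Ch}_K(\bar X)$ by decomposition (3.5). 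The main obstacle I anticipate is not the combinatorics but the lifting step: one must ensure the idempotents and the isomorphisms $\pi_L M^K(X)\simeq \mathcal R(h)(||L||)$ genuinely descend from $\bar F$ to $F$, and this rests entirely on Corollary 9.3 guaranteeing that the kernel of restriction is a nil ideal, so that rational idempotents and rational isomorphisms of split motives lift uniquely.
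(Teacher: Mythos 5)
Your proposal is correct and takes essentially the same route as the paper: you lift the rational pairwise-orthogonal projectors $\theta_L$ of Lemma 9.1(ii) to genuine idempotents over $F$ via Rost Nilpotence (Corollary 9.3), identify the summands up to shift by $||L||$ through the correspondences $\theta_{L,L'}$, prove indecomposability by showing the rational endomorphism ring of a summand is $\mathbb{Z}/2\mathbb{Z}$ modulo nilpotents (the paper phrases this via the injective ring homomorphism $\theta_L\mapsto\theta_L^{\ast}$ into $\text{End}\left(\overline{\text{Ch}}_K(X)\right)$ from Lemma 9.1(i)), and split $\overline{\mathcal{R}(h)}$ into Tate motives by decomposing $\theta_{\emptyset}$ into the rank-one projectors $e_M\times e_{(\overline{J}\backslash M)\sqcup J}$, exactly as in the paper. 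No gaps.
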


\begin{proof}

By Lemma 9.1(ii), one has
\[\theta_{L,N}\circ \theta_{L',N'}=\delta_{L,N'}\cdot\theta_{L',N}.\]
In particular, this implies that for any $L,L'\subset J$, the projectors $\theta_L$ and $\theta_{L'}$ are
isomorphic (in the sense of \cite[\S 2.1]{J-inv}), the isomorphism being given by $\theta_{L,L'}$ and 
$\theta_{L',L}$.

We claim that the projectors $\theta_L$ are indecomposable.
For any integers $0\leq k \leq ||\bar{J}||$ and $0\leq l\leq \text{dim}(X)$, we write
\[\overline{\text{Ch}}_K^{k,l}(X^2)\]
for the subspace of $\overline{\text{Ch}}_K(X^2)$ spanned by the elements
$\theta_{S,L,L'}$ with $||S||\leq k$ and codimension $\leq l$.
For any correspondences $\alpha, \alpha'\in \text{Ch}_K(\bar{X}^2)$ one has
\[\text{codim}(\alpha\circ \alpha')=\text{codim}(\alpha)+\text{codim}(\alpha')-\text{dim}(X).\]
Thus, by Lemma 9.1(i) (or Lemma 9.1(ii)),
the space $\overline{\text{Ch}}_K^{||\bar{J}||,\text{dim}(X)}(X^2)$ equipped with the composition of correspondences is a ring.
Furthermore, using the same codimensional considerations and the formula of Lemma\,9.1(ii), one sees that, 
for any $0\leq k \leq ||\bar{J}||$ and $0\leq l\leq \text{dim}(X)$, the subspace 
$\overline{\text{Ch}}_K^{k,l}(X^2)$
is an ideal of $\overline{\text{Ch}}_K^{||\bar{J}||,\text{dim}(X)}(X^2)$.
We denote by $\mathcal{A}$ the factor ring of 
$\overline{\text{Ch}}_K^{||\bar{J}||,\text{dim}(X)}(X^2)$ by the sum
of ideals 
$\overline{\text{Ch}}_K^{k,l}(X^2)$ for all $(k,l)\in \left([0,\; ||\bar{J}||]\times[0,\; \text{dim}(X)]\right)\backslash \{(||\bar{J}||,\text{dim}(X))\}$.
% and by $(x)_R$ the class in $\mathcal{R}$ of an element 
%$x\in \overline{\text{Ch}}_K^{||\bar{J}||,\text{dim}(X)}(X^2)$.
By Lemma\,9.1(i), a $\mathbb{Z}/2\mathbb{Z}$-basis of $\mathcal{A}$ is given by the classes of
the elements $\theta_{L,L'}$ with $||L||=||L'||$.
Hence, since $\theta_{L,L'}^{\ast}(e_N)=\delta_{N,L'}\cdot e_L$ for any $N\subset J$ (by (5.6)), it follows from
Lemma 9.1(ii) that the assignment $\theta_{L,L'}\mapsto \theta_{L,L'}^{\ast}$ for all
$\theta_{L,L'}$ with $||L||=||L'||$ gives rise to an anti-isomorphism between $\mathcal{A}$ 
and the product of matrix rings 
\[\prod_{ k=0}^{\text{dim}(X)}\text{End}\left( \mathcal{E}_k\right),\]
where $\mathcal{E}_k$ is the subspace of $\overline{\text{Ch}}_K(X)$ spanned by the elements
$e_N$ with $||N||=k$.
Under this identification, for any $L\subset J$, the class of $\theta_L$ in $\mathcal{A}$ corresponds to an idempotent element 
of rank $1$ and therefore is indecomposable.
Moreover, by Lemma 9.1(ii) and codimensional reasons
(for any correspondence $\alpha\in \text{Ch}_K(\bar{X}^2)$ of codimension $<\text{dim}(X)$, one has
$\text{codim}\left(\alpha^{\circ i}\right)>\text{codim}\left(\alpha^{\circ (i+1)}\right)$), the kernel of the projection
\[\pi: \overline{\text{Ch}}_K^{||\bar{J}||,\text{dim}(X)}(X^2)\rightarrow \mathcal{A}\]
is nilpotent. Thus, the projectors $\theta_L$ are indecomposable in $\overline{\text{Ch}}_K(X^2)$. The claim is proven.

%We consider the morphism of $\mathbb{Z}/2\mathbb{Z}$-modules 
%\[\overline{\text{Ch}}_K^{\text{\;dim}(X)}(X\times X) \rightarrow  %\text{End}\left(\overline{\text{Ch}}_K(X)\right),\] which associates
%$\theta_L^{\ast}$ to $\theta_L$.
%Since $\{e_N\;|\;N\subset J\}$ is a $\mathbb{Z}/2\mathbb{Z}$-basis of 
%$\overline{\text{Ch}}_K(X)$ (see Theorems 4.9 and 5.7) and
%\begin{equation} \theta_L^{\ast}(e_N)=\left \{\begin{array}{ll}  e_{L} & \;\;\text{if}\;\; N=L\; ; \\
% 0 &\;\; \text{otherwise} \end{array}\right. \end{equation}
%(again by (5.6)),
%it is in fact an injective ring homomorphism (with the ring structure on $\overline{\text{Ch}}_K^{\text{\;dim}(X)}(X\times X)$ given by the composition of correspondences). 
%In particular, it preserves sums of orthogonal projectors.
%Therefore it suffices to show that $\theta_L^{\ast}$ is indecomposable. 
%The latter is true since by (9.5) one has $\text{dim}\left(\text{Im}(\theta_L^{\ast})\right)=1$
%(otherwise one would have $\text{dim}\left(\text{Im}(\theta_L^{\ast})|\geq 2 \right)$).
%The claim is proven.

Consequently, combining with Lemma 9.1(iii) and Rost Nilpotence Corollary 9.3, one get that there exists a family 
$\{\psi_L\;|\;L\subset J \}$ of
pairwise orthogonal projectors in 
$\text{Ch}_K^{\text{dim}(X)}(X\times X)$, satisfying $\overline{\psi_L}=\theta_L$,
all isomorphic to $\psi_J$ (with respect to the correspondences $\theta_{L, J}$), and such that $\sum_{L\subset J}\psi_L=\Delta_X$
(see \cite[Proposition 2.6]{J-inv}). Note that Rost Nilpotence implies that the projectors $\psi_L$
are also indecomposable. Let $\mathcal{R}(h)$ denote the indecomposable $\text{Ch}_K$-motive $(X, \psi_\emptyset)^K$. Thus, for any $L\subset J$, one has
$(X, \psi_L)^K=\mathcal{R}(h)\{||L||\}$ (since $\text{codim}(\theta_{L, J})=\text{dim}(X)-||J\backslash L||$).
In other words, one has the desired $\text{Ch}_K$-motivic decomposition of $M^K(X)$.
%\[M^K(X)\simeq \bigoplus_{L\subset J} \mathcal{R}(h)(||J\backslash L||).\]

We prove now the last assertion of the theorem. Over a splitting field, one has 
$\overline{\mathcal{R}(h)}=(\bar{X}, \theta_{\emptyset})^K$. Moreover the writing 
$\theta_{\emptyset}=\sum_{M\subset \bar{J}}e_M\times e_{(\bar{J}\backslash M)\sqcup J}$ is a decomposition as a sum of pairwise orthogonal projectors in 
$\text{Ch}_K(\bar{X}\times \bar{X})$. These projectors are all isomorphic to $1\times e_{\bar{J} \sqcup J}$ 
with $(\bar{X},e_M \times e_{(\bar{J}\backslash M)\sqcup J})^K\simeq (\bar{X}, 1\times e_{\bar{J}\sqcup J})^K\{||M||\}$ (recall that $e_{\bar{J}\sqcup J}$ is the class of a rational point).
Furthermore, one easily checks that the $\text{Ch}_K$-motive $(\bar{X}, 1\times e_{\bar{J}\sqcup J})^K$ is isomorphic the the Tate motive $\mathbb{Z}/2\mathbb{Z}$ in the category of $\text{Ch}_K$-motives.

The theorem is proven.
\end{proof}

The following statement can be viewed as the $\text{Ch}_K$-version of the Krull-Schmidt principle
for the $F$-variety $X$.

\begin{cor}
\textit{Any direct summand of $M^K(X)$ is isomorphic to a sum of shifts of the motive $\mathcal{R}(h)$.
}
\end{cor}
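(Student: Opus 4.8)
The plan is to derive the corollary from the decomposition of Theorem 9.4 by invoking the Krull--Schmidt principle in the category of $\text{Ch}_K$-motives. Recall first that, like the category of Chow motives (\cite[Chapter XII]{EKM}), this category is pseudo-abelian: it is built so that every projector splits off a direct summand. Hence a direct summand of $M^K(X)$ is given by an idempotent $\rho$ in the ring $\text{End}^0\left(M^K(X)\right)=\text{Ch}_K^{\text{dim}(X)}(X\times X)$, the summand being $(X,\rho)^K$, and the task is to show that any such summand is isomorphic to a direct sum of shifts of $\mathcal{R}(h)$.

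The heart of the matter is to check that $\mathcal{R}(h)$ has a \emph{local} endomorphism ring. Since $X$, and therefore $X\times X$ (the latter under the product group), is projective homogeneous, its Chow groups are finitely generated -- a classical fact for twisted flag varieties -- so every $\text{Ch}_K$-group appearing here is a finite $\mathbb{Z}/2\mathbb{Z}$-vector space; in particular $\text{End}^0\left(\mathcal{R}(h)\right)$ is a finite-dimensional $\mathbb{Z}/2\mathbb{Z}$-algebra. By the proof of Theorem 9.4 -- where Rost Nilpotence (Corollary 9.3) is used to lift the indecomposable projector $\theta_{\emptyset}$ to the indecomposable projector $\psi_{\emptyset}$ -- the motive $\mathcal{R}(h)=(X,\psi_{\emptyset})^K$ is indecomposable, so $\text{End}^0\left(\mathcal{R}(h)\right)$ has no idempotents other than $0$ and the identity. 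A finite-dimensional algebra over a field with no nontrivial idempotents is local: its Jacobson radical is nilpotent, idempotents lift modulo the radical, and the semisimple quotient, having itself no nontrivial idempotents, is forced to be a division ring.

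Once locality is established, I would conclude formally. With idempotents splitting and each indecomposable summand having a local endomorphism ring, the Krull--Schmidt--Azumaya theorem applies: the decomposition of any object into indecomposables is unique up to permutation and isomorphism, and every direct summand is isomorphic to the direct sum of a subfamily of the indecomposable summands of the ambient object. Applying this to $M^K(X)\simeq \bigoplus_{L\subset J(h)}\mathcal{R}(h)(||L||)$ from Theorem 9.4, whose indecomposable summands are precisely the shifts $\mathcal{R}(h)(||L||)$, shows that any direct summand of $M^K(X)$ is isomorphic to a sum of shifts of $\mathcal{R}(h)$. The main obstacle is the locality step together with the finiteness input needed to run Krull--Schmidt; after Rost Nilpotence has supplied indecomposability over $F$, the remainder of the argument is category-theoretic and runs automatically.
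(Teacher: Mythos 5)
Your argument is correct in outline but takes a more abstract route than the paper and leans on one unjustified input. The paper obtains the corollary directly from Lemma 9.1 and Corollary 9.3: a direct summand of $M^K(X)$ is $(X,\rho)^K$ for an idempotent $\rho\in\text{Ch}_K^{\text{dim}(X)}(X^2)$; its restriction $\bar{\rho}$ lies in $\overline{\text{Ch}}_K^{\text{dim}(X)}(X^2)$, which by Lemma 9.1 is the $\mathbb{Z}/2\mathbb{Z}$-span of the pairwise orthogonal idempotents $\theta_L$ with $\sum_{L}\theta_L=\Delta_{\bar{X}^2}$, i.e.\ a finite product of copies of $\mathbb{Z}/2\mathbb{Z}$ whose only idempotents are the subsums $\sum_{L\in S}\theta_L$. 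Then $\rho$ and $\sum_{L\in S}\psi_L$ are idempotents congruent modulo the nilpotent kernel of Corollary 9.3, hence conjugate, whence $(X,\rho)^K\simeq\bigoplus_{L\in S}(X,\psi_L)^K=\bigoplus_{L\in S}\mathcal{R}(h)(||L||)$. This uses the explicit basis of rational degree-zero correspondences rather than Krull--Schmidt machinery, and requires no finiteness statement over the base field.

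Your route through locality of $\text{End}^0\left(\mathcal{R}(h)\right)$ and Krull--Schmidt--Azumaya is legitimate and yields the same conclusion, but the finiteness step is the weak link: finite generation of $\text{CH}(X\times X)$ over the base field is neither proved in the paper nor a safe thing to call classical, and it is precisely what the standard treatments of Krull--Schmidt for geometrically split motives with finite coefficients are designed to avoid. You do not need it. By Corollary 9.3 the restriction $\text{End}^0\left(\mathcal{R}(h)\right)\rightarrow\text{End}^0\left(\overline{\mathcal{R}(h)}\right)$ has nilpotent kernel, and its target is finite because $\text{Ch}_K(\bar{X}^2)$ is a free $\mathbb{Z}/2\mathbb{Z}$-module of finite rank; a ring possessing a nil ideal with finite quotient and with no nontrivial idempotents is local, since idempotents lift along nil ideals and a finite ring without nontrivial idempotents is local. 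With that substitution, the indecomposability of $\mathcal{R}(h)$ from the proof of Theorem 9.4 still gives locality of its endomorphism ring, and the remainder of your Azumaya argument applies to the finite decomposition $M^K(X)\simeq\bigoplus_{L\subset J(h)}\mathcal{R}(h)(||L||)$ exactly as you describe.
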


\begin{proof}
We use notation and material introduced in the proof of Theorem 9.4.
Any idempotent in the ring $\mathcal{A}$ is isomorphic (in the sense of \cite[\S 2.1]{J-inv}) 
to a sum of classes of $\theta_L$ for some $L\subset J$.
Moreover, since $\text{Ker}\left(\pi \right)$ is nilpotent, the projection $\pi$ lifts isomorphisms,
see \cite[Proposition 2.6]{J-inv}.
\end{proof}

\begin{rem}
It follows from Theorem 9.4 that $M^K(X)$ is indecomposable if and only if $J(h)$ is empty, i.e., if
and only if $X$ is $2$-incompressible.
In particular, this applies to generic hermitian forms.
\end{rem}

\begin{rem}

%We recall that one always has $\overline{\text{Ch}}(M_e) \simeq\overline{\text{Ch}}_K(X)$, 
%and $\text{Ch}(\overline{M_e})\simeq \text{Ch}_K(\overline{X})$
%with $M_e$ the essential Chow
%motive of $X$ (see \cite[Remark 7.4]{UG}). Moreover, it follows from the proof of Lemma 9.1(i) that the %external product induces an isomorphism
%\begin{equation}\overline{\text{Ch}}_K(X) \otimes \text{Ch}_K(\bar{X})\simeq \overline{\text{Ch}}_K(X\times %X).\end{equation}
%Note that since the motive $\overline{M_e}$ is split, one can also do what has been done in the 
%proof of Lemma 9.1(i) with $\text{Ch}$ and $M_e$ instead of $\text{Ch}_K$ and $X$.
%Therefore, one also get the isomorphism corresponding to (9.9) at the level of $\text{Ch}$ and $M_e$.
%Hence, there is an isomorphism
It follows from (6.6) that there is a group isomorphism
\[\text{Ch}(M_e \otimes M_e) \rightarrow \text{Ch}_K(X\times X),\]
with $M_e$ the essential Chow motive of $X$.
Moreover, let $\varphi\in \text{Ch}(X\times X)$ be the projector giving $M_e$.
Then the group $\text{Ch}(M_e \otimes M_e)=\varphi\circ \text{Ch}(X\times X)\circ \varphi$
is equipped with the ring structure given by the composition of correspondences
(the neutral element being $\varphi$).
Since the above isomorphism is the restriction to $\varphi\circ \text{Ch}(X\times X)\circ \varphi$ of the projection $ \text{Ch}(X\times X) \rightarrow \text{Ch}_K(X\times X)$, it is a ring isomorphism
(with $ \text{Ch}_K(X\times X)$ equipped with the composition of $\text{Ch}_K$-correspondences).
Hence, 
%by Rost Nilpotence, 
the essential motive $M_e$
is decomposable if and only if the
$\text{Ch}_K$-motive $M^K(X)$ is decomposable.
\end{rem}

\section{Comparison with quadratic forms}

Let $h$ be a non-degenerate $K/F$-hermitian form and $q$ the associated non-degenerate $F$-quadratic form.
In this section, we compare the $J$-invariant $J(h)$ with the $J$-invariant $J(q)$ as defined by A.\;Vishik
in \cite{Grass}. We recall that if $\text{dim}(h)=2n+2$ then $J(q)$ is a subset of $[0,\; 2n+1]$, otherwise 
-- if $\text{dim}(h)=2n+1$ then it is a subset of $[0,\; 2n]$ and in both cases $J(h)$ is a 
subset of $[1,\; 2n+1]_{\text{od}}$.

Let $X$ be the maximal hermitian grassmannian of $h$ and
$Y$ the maximal orthogonal grassmannian of $q$.
For even-dimensional $h$, let $im$ denote the
natural closed embedding $X\hookrightarrow Y$ (see Remark 4.12).
 %$Y$ the maximal orthogonal grassmannian of $q$ and let $im$ denote
%Since a totally $h$-isotropic $K$-subspace is also a totally $q$-isotropic $F$-subspace, 
%there is a
%the natural closed imbedding $X\hookrightarrow Y$.

\begin{prop}
\textit{One has}
\[J(q)=\left \{\begin{array}{cl} 
 J(h) \cup [0,\; 2n]_{\text{ev}}& \;\;\textit{if} \;\; \text{dim}(h)=2n+2 ; \\ 
 & \\
 \left[1,\; 2n \right] & \;\; \textit{if} \;\; \text{dim}(h)=2n+1 , 
\end{array}\right.\]
\textit{where} $\left[0,\; 2n \right]_{\text{ev}}$ \textit{stands for the even part of the set $[0,\; 2n]$.}
\end{prop}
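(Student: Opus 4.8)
The plan is to compare, index by index, the rationality of the split generators on the two sides, exploiting that the quadratic form $q$ is controlled by the extension $K/F$. I would begin by recording the two structural facts that drive the whole argument. Recall from the introduction that $q\cong b\otimes \mathbb{N}_{K/F}$ for a bilinear form $b$ with $\dim(b)=\dim(h)$; hence $q$ becomes hyperbolic over $K$, so $K$ is a splitting field of $q$ and \emph{every} generator $f_k\in\text{Ch}(\bar Y)$ is already $K$-rational. Moreover a discriminant computation gives $d_{\pm}(q)\equiv d^{\dim(h)}$ modulo squares, so $\text{disc}(q)$ is trivial exactly when $\dim(h)$ is even, while for $\dim(h)$ odd one has $F(\sqrt{d_{\pm}(q)})=K$, i.e. $K$ is precisely the discriminant extension of $q$. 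This dichotomy is what will separate the fate of the index $0$ in the two cases.

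For $\dim(h)=2n+2$ I would treat the \emph{odd} indices through the closed embedding $im:X\hookrightarrow Y$ of Remark 4.12, which satisfies $e_k=im^{\ast}(f_k)$ for all $1\le k\le 2n+1$. If $f_k$ is rational, say $f_k=\text{res}(\zeta)$ with $\zeta\in\text{Ch}(Y)$, then $im^{\ast}(\zeta)$ restricts to $e_k$, and projecting modulo the norm ideal shows $e_k\in\overline{\text{Ch}}_K(X)$; thus $k\in J(q)\Rightarrow k\in J(h)$ for odd $k$. The reverse implication is the delicate point, because $im^{\ast}$ runs the wrong way and $\text{Ch}_K$ has quotiented out the norm ideal. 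Here I would route rationality through the common quadric $Q$, using the identification $\text{Ch}_K(X_1)\simeq\text{Ch}_K(M_1)$ of $(6.7)$ together with the motivic splitting $M(Q)\simeq M_1\oplus M_1(1)$ of $(6.8)$ (from \cite[Corollary 9.6]{UG}): the rational cycles of the summand $M_1$ are exactly the data recorded by $J(h)$, and the odd generators $f_k$ of $\overline{\text{Ch}}(Y)$ can be read off the same data, which gives $k\in J(h)\Rightarrow k\in J(q)$.

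The even indices $[0,2n]_{\text{ev}}$ form a fixed padding present for \emph{every} $h$, and I would account for them by the Tate-shifted summand $M_1(1)$ in $M(Q)\simeq M_1\oplus M_1(1)$: its rational cycles sit one codimension above those governing $J(h)$, hence in even degree, so they contribute precisely the even generators regardless of $J(h)$. A concrete way to exhibit this rationality is a corestriction argument: as every $f_k$ is $K$-rational, $\text{cor}_{K/F}$ yields the $F$-rational classes $f_k+\sigma(f_k)$, where $\sigma\in\text{Gal}(K/F)$ swaps the two isotropic lines of $\mathbb{N}_{K/F}$ and therefore acts on $\text{Ch}(\bar Y)$ through the family structure. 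The family-distinguishing class $f_0$ is rational iff the two families of maximal isotropic subspaces are defined over $F$, i.e. iff $\text{disc}(q)$ is trivial, which explains $0\in J(q)$ in the even case and $0\notin J(q)$ in the odd case. In the odd-dimensional case $K$ is the discriminant extension, so $\sigma$ genuinely interchanges the two families and the same corestriction makes every $f_k$ with $k\ge 1$ rational, forcing $J(q)=[1,\,2n]$ independently of $h$.

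The principal obstacle I anticipate is twofold. First, the corestriction/Galois computation on $\text{Ch}(\bar Y)$ must be made precise enough to pin down exactly which generators $f_k$ it produces; this is the genuinely quadratic input that forces the even padding in the first case and the full answer $[1,\,2n]$ in the second, and it is where the divisibility of $q$ by the binary norm form is really used. Second, securing the reverse implication $k\in J(h)\Rightarrow k\in J(q)$ for odd $k$ in the even case requires transporting rationality from the hermitian object $X$, where one only controls $\text{Ch}_K$, to the orthogonal object $Y$, where one controls the integral $\text{Ch}$. In both matters the bridge is the shared quadric $Q$ and the decomposition $M(Q)\simeq M_1\oplus M_1(1)$, so the bulk of the remaining work is careful bookkeeping of the codimension shift against the differing index conventions for $J(q)$ and $J(h)$.
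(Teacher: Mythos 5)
Your first step (the inclusion $J(q)_{\text{od}}\subset J(h)$ via $im^{\ast}(f_k)=e_k$) and your observation that $0\in J(q)$ iff $\operatorname{disc}(q)$ is trivial are both correct and match the paper. But the two steps that actually carry the proposition are not proved in your proposal, and the routes you sketch for them do not work as stated. First, for $k\in J(h)\Rightarrow k\in J(q)$ you propose to ``route rationality through the common quadric $Q$'' using $\text{Ch}_K(X_1)\simeq\text{Ch}_K(M_1)$ and $M(Q)\simeq M_1\oplus M_1(1)$. Those decompositions live on the hermitian quadric $X_1$ and the quadric $Q$, whereas $J(h)$ and $J(q)$ are defined by rationality of cycles on the \emph{maximal} grassmannians $X$ and $Y$; neither invariant can be ``read off'' from $M(Q)$, and the claim that ``the rational cycles of the summand $M_1$ are exactly the data recorded by $J(h)$'' is unsupported. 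Second, your corestriction argument for the even padding is left entirely open: over $K$ the form $q_K$ is hyperbolic and all $f_k$ are $K$-rational, but $\operatorname{res}\circ\operatorname{cor}=1+\sigma$ modulo $2$, so whenever $\sigma$ fixes $f_k$ (which is the generic expectation for $k\geq 1$, since only the orientation class is sensitive to swapping the two families) the transfer produces $2f_k=0$ and yields nothing. You acknowledge this computation as the ``genuinely quadratic input,'' but without it the inclusion $[0,2n]_{\text{ev}}\subset J(q)$, and the whole odd-dimensional case, remain unestablished.

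The paper closes both gaps simultaneously by a counting argument that you do not use at all. Since $q$ is divisible by the norm form of $K/F$, all absolute higher Witt indices of $q$ are even, and \cite[Proposition 88.8]{EKM} then gives $[0,\,2n]_{\text{ev}}\subset J(q)$ directly. Next, $X$ and $Y$ have the same canonical $2$-dimension by \cite[Corollary 9.3]{UG}, so Theorem 8.2 together with \cite[Theorem 90.3]{EKM} yields $\lVert J(q)\rVert-\lVert J(h)\rVert=\dim(Y)-\dim(X)=0+2+\cdots+2n=\lVert[0,\,2n]_{\text{ev}}\rVert$. Combined with the two easy inclusions $J(q)_{\text{od}}\subset J(h)$ and $[0,\,2n]_{\text{ev}}\subset J(q)$, this numerical identity forces $J(q)=J(h)\cup[0,\,2n]_{\text{ev}}$; in particular the delicate reverse inclusion for odd indices is obtained for free, with no transport of cycles from $X$ to $Y$. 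The odd-dimensional case is handled the same way: $\operatorname{cdim}_2(Y)=0$ gives $\lVert J(q)\rVert=\dim(Y)=n(2n+1)$, and since $0\notin J(q)$ (nontrivial discriminant) and $J(q)\subset[0,\,2n]$, the only possibility is $J(q)=[1,\,2n]$. I recommend you replace the motivic and corestriction sketches by this canonical-dimension bookkeeping.
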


\begin{proof}
Assume that $\text{dim}(h)=2n+2$.
We use notation and material introduced in Remark 4.12.
Since $\textit{im}^{\ast}(z_k)=e_k$ in $\text{Ch}(\bar{X})$
for any $1\leq k\leq 2n+1$ (see Remark 4.12) and 
%In this case, by the very definition of the vector bundle $E$ over $X$ introduced in 
%subsection 3.2, for any $0\leq k\leq 2n+1$, one has $\textit{in}^{\ast}(f_k)=e_k$ in $\text{Ch}(\overline{X})$,
%where the elements
the generators $z_k$ of the ring $\text{Ch}(\bar{Y})$ define $J(q)$ the same way
the elements $e_k\in \text{Ch}_K(\bar{X})$, for $k$ odd, define $J(h)$ (see \cite{Grass} or \cite{EKM}),
one has $J(q)_{\text{od}}\subset J(h)$.
Moreover, since the quadratic form $q$ is obtained from a $K/F$-hermitian form, the absolute Witt indices of $q$ are even.
Hence, by \cite[Proposition 88.8]{EKM} (this is the quadratic equivalent of Proposition 6.12, with the $J$-invariant defined in the opposite way), the set $[0,\; 2n]_{\text{ev}}$ is contained in $J(q)$.
Furthermore, by \cite[Corollary 9.3]{UG}, the varieties $X$ and $Y$ have the same canonical $2$-dimension.
Thus, by \cite[Theorem 90.3]{EKM} and Theorem 8.2, one has
\[||J(q)||-||J(h)||=\text{dim}(Y)-\text{dim}(X)=0+2+\cdots +(2n-2)+2n.\]
Consequently, one has $J(q)=J(h) \cup [0,\; 2n]_{\text{ev}}$.

Assume that $\text{dim}(h)=2n+1$. In this case, one has $\text{cdim}_2(Y)=0$ (see \cite[Corollary 9.3]{UG}). Therefore, by \cite[Theorem 90.3]{EKM}, one has 
\[ ||J(q)||=\text{dim}(Y)=\frac{2n(2n+1)}{2}.\]
Moreover $0\notin J(q)$ because the discriminant of $q$ is not trivial.
%since $i_0(q)$ is even, the quadratic form $q$ is not split. Hence, 
%by \cite[Proposition 88.2]{EKM}, $J(q)$ is not maximal. 
Consequently, one has $J(q)=[1,\; 2n]$.
\end{proof}

\begin{rem}
Proposition 10.1 allows one to recover the smallest value of the $J$-invariant of a non-degenerate quadratic form $q$ associated with a hermitian form $h$ of even dimension $2n+2$ over a quadratic separable field extension of the base field, i.e., $q$ is given by the tensor product of a $(2n+2)$-dimensional bilinear form by a binary quadratic form.
Namely, this value is $[0,\; 2n]_{\text{ev}}$ and it is obtained for $h$ generic.
This was originally proven by N.\,A.\,Karpenko, see \cite[Corollary 9.4]{UG}.
%(note that the proof of Proposition 10.1 does not use Corollary 4.14, whose proof 
%do use \cite[Corollary 9.4]{UG}).
\end{rem}

\begin{rem}
For even-dimensional non-degenerate $K/F$-hermitian forms, Proposition 10.1 and its proof, combined with Theorem 5.7, provide another argument for the surjectivity
of the homomorphism
\[\overline{\text{Ch}}(Y)\rightarrow \overline{\text{Ch}}_K(X),\]
associated with the pull-back $\textit{im}^{\ast}$. 
This was originally observed by Maksim Zhykhovich, see
\cite[Lemma 9.8]{UG}. 
\end{rem}

\bibliographystyle{acm} 
\bibliography{references}
\end{document}